\definecolor{winered}{rgb}{0.7,0,0}
\definecolor{lessblue}{rgb}{0,0,0.7}
\tikzset{ampersand replacement=\&}
\newcounter{mnotecount}[section]
\renewcommand{\themnotecount}{\thesection.\arabic{mnotecount}}
\newcommand{\mnote}[1]{\protect{\stepcounter{mnotecount}}${\raisebox{0.5\baselineskip}[0pt]{\makebox[0pt][c]{\color{magenta}{\tiny\em$\bullet$\themnotecount}}}}$\marginpar{\raggedright\tiny\em$\!\bullet$\themnotecount:
#1}\ignorespaces}
\newcommand{\myitem}[3]{\item[#2]\def\@currentlabel{#3}\label{#1}}
\def\@tocline#1#2#3#4#5#6#7{
\begingroup
  \par
    \parindent\z@ \leftskip#3 \relax \advance\leftskip\@tempdima\relax
                  \rightskip\@pnumwidth plus 4em \parfillskip-\@pnumwidth
    \ifcase #1 
       \vskip 0.6em \hskip 0em 
       \or
       \or \hskip 0em 
       \or \hskip 1em 
    \fi%
    %
    #6
    %
    \nobreak\relax{\leavevmode\leaders\hbox{\,.}\hfill}
    \hbox to\@pnumwidth {\@tocpagenum{#7}}
  \par
\endgroup
}
 \def\l@section{\@tocline{0}{0pt}{0pc}{}{}}
\renewcommand{\tocsection}[3]{%
  \indentlabel{\@ifnotempty{#2}{ 
    \ignorespaces\bfseries{#2. #3}}}
  \indentlabel{\@ifempty{#2}{\ignorespaces\bfseries{#3}}{}} 
    \vspace{1.5pt}}
\renewcommand{\tocsubsection}[3]{%
  \indentlabel{\@ifnotempty{#2}{
    \ignorespaces#2. #3}}
  \indentlabel{\@ifempty{#2}{\ignorespaces #3}{}}
    \vspace{1.5pt}}
\renewcommand{\tocsubsubsection}[3]{%
  \indentlabel{\@ifnotempty{#2}{
    \ignorespaces#2. #3}}
  \indentlabel{\@ifempty{#2}{\ignorespaces #3}{}}
    \vspace{1.5pt}}
\def\@nomenstarted{0}
\newlength{\@nomenoldtabcolsep}
\newcommand{\nomenstart}
  {%
    \def\@nomenstarted{1}%
    \setlength{\@nomenoldtabcolsep}{\tabcolsep}%
    \setlength{\tabcolsep}{3.5pt}%
    \begin{longtable}{p{0.11\textwidth} p{0.86\textwidth}}
  }
\newcommand{\nomenitem}[2]{%
    \ifcase\@nomenstarted%
      \or 
      \or \\ 
    \fi%
    #1\,{\leavevmode\leaders\hbox{\,.}\hfill} & #2%
    \def\@nomenstarted{2}%
  }%
\newcommand{\nomenend}
  {\\%
      \end{longtable}%
      \setlength{\tabcolsep}{\@nomenoldtabcolsep}%
      \def\@nomenstarted{0}%
  }
\def\Hop{\mathsf{H}}
\def\Cop{\mathsf{C}}
\def\Dop{\mathsf{D}}
\def\Kop{\mathsf{K}}
\def\id{\mathrm{id}}
\def\id{\mathrm{id}}
\newcommand{\sCurl}{\mathscr{C}}
\newcommand{\sCurlDagger}{\mathscr{C}^\dagger}
\newcommand{\sTwist}{\mathscr{T}}
\def\InvSymb{\mathbb{I}}
\def\ImA{\Im\mathbf{A}}
\def\ReA{\Re\mathbf{A}}
\def\AA{\mathbf{A}}
\def\htf{h^{\mathrm{tf}}}
\newcommand{\vast}{\bBigg@{4}}
\newcommand{\Vast}{\bBigg@{5}}
\numberwithin{equation}{section}
\numberwithin{figure}{section}
\newtheorem{thm}{Theorem}[section]
\newtheorem{prop}[thm]{Proposition}
\newtheorem{lemma}[thm]{Lemma}
\newtheorem*{thm*}{Theorem}
\newtheorem*{prop*}{Proposition}
\newtheorem*{cor*}{Corollary}
\newtheorem*{conj*}{Conjecture}
\theoremstyle{definition}
\newtheorem{definition}[thm]{Definition}
\theoremstyle{remark}
\newtheorem{rmk}[thm]{Remark}
\newcommand{\mc}{\mathcal}
\newcommand{\cA}{\mc A}
\newcommand{\cB}{\mc B}
\newcommand{\cC}{\mc C}
\newcommand{\cE}{\mc E}
\newcommand{\cF}{\mc F}
\newcommand{\cH}{\mc H}
\newcommand{\cM}{\mc M}
\newcommand{\cN}{\mc N}
\newcommand{\cO}{\mc O}
\newcommand{\cR}{\mc R}
\newcommand{\cS}{\mc S}
\newcommand{\cV}{\mc V}
\newcommand{\cX}{\mc X}
\newcommand{\ms}{\mathscr}
\newcommand{\scri}{\ms I}
\newcommand{\C}{\mathbb{C}}
\newcommand{\N}{\mathbb{N}}
\newcommand{\R}{\mathbb{R}}
\newcommand{\Z}{\mathbb{Z}}
\newcommand{\Sph}{\mathbb{S}}
\newcommand{\sfG}{\mathsf{G}}
\newcommand{\bfa}{\mathbf{a}}
\newcommand{\fm}{\mathfrak{m}}
\newcommand{\ft}{\mathfrak{t}}
\newcommand{\sld}{\slashed{d}{}}
\newcommand{\slg}{\slashed{g}{}}
\newcommand{\sldelta}{\slashed{\delta}{}}
\newcommand{\slDelta}{\slashed{\Delta}{}}
\newcommand{\slnabla}{\slashed{\nabla}{}}
\newcommand{\slstar}{\slashed{\star}}
\newcommand{\sltr}{\operatorname{\slashed\tr}}
\newcommand{\scal}{\mathsf{S}}
\newcommand{\vect}{\mathsf{V}}
\newcommand{\scalspace}{\mathbf{S}}
\newcommand{\vectspace}{\mathbf{V}}
\newcommand{\Ker}{\operatorname{Ker}}
\newcommand{\mathspan}{\operatorname{span}}
\newcommand{\supp}{\operatorname{supp}}
\newcommand{\tr}{\operatorname{tr}}
\newcommand{\Ups}{\Upsilon}
\newcommand{\eps}{\epsilon}
\newcommand{\hra}{\hookrightarrow}
\newcommand{\la}{\langle}
\newcommand{\ol}{\overline}
\newcommand{\pa}{\partial}
\newcommand{\ra}{\rangle}
\newcommand{\wh}{\widehat}
\newcommand{\wt}{\widetilde}
\newcommand{\ubar}[1]{\underaccent{\bar}{#1}}
\newcommand{\bop}{{\mathrm{b}}}
\newcommand{\scop}{{\mathrm{sc}}}
\newcommand{\scl}{{\mathrm{sc}}}
\newcommand{\Diff}{\mathrm{Diff}}
\newcommand{\Vb}{\cV_\bop}
\newcommand{\Diffb}{\Diff_\bop}
\newcommand{\Vsc}{\cV_\scl}
\newcommand{\Diffsc}{\Diff_\scl}
\newcommand{\Omegasc}{{}^{\scop}\Omega}
\newcommand{\Tb}{{}^{\bop}T}
\newcommand{\Tsc}{{}^{\scl}T}
\newcommand{\half}{{\tfrac{1}{2}}}
\newcommand{\sigmasc}{{}^\scop\sigma}
\newcommand{\CI}{\cC^\infty}
\newcommand{\CIdot}{\dot\cC^\infty}
\newcommand{\Hb}{H_{\bop}}
\newcommand{\Hbext}{\bar H_{\bop}}
\newcommand{\Hbsupp}{\dot H_{\bop}}
\newcommand{\Hext}{\bar H}
\newcommand{\Hsupp}{\dot H}
\newcommand{\Hsc}{H_{\scop}}
\newcommand{\Ric}{\mathop{\rm Ric}}
\newcommand{\Weyl}{\mathop{\rm Weyl}}
\newcommand{\bhm}{\fm}
\newcommand{\bha}{\afrak}
\newcommand{\bhvecta}{\bfa}
\newcommand{\bhn}{\mathfrak{n}}
\newcommand{\bhc}{\mathfrak{c}}
\newcommand{\bhp}{p}
\newcommand{\SL}{\mathop{\rm SL}}
\newcommand{\SO}{\mathop{\rm SO}}
\newcommand{\Co}{\mathbb{C}}
\newcommand{\openbigpmatrix}[1]
  {%
    \def\@bigpmatrixsize{#1}%
    \addtolength{\arraycolsep}{-#1}%
    \begin{pmatrix}%
  }
\newcommand{\closebigpmatrix}
  {%
    \end{pmatrix}%
    \addtolength{\arraycolsep}{\@bigpmatrixsize}%
  }
\newlength{\enummargin}\setlength{\enummargin}{1.5em}
\newcommand{\inclfig}[1]{\includegraphics{#1}}
\newcommand*{\fwbw}[1]{\expandafter\@fwbw\csname c@#1\endcsname}
\newcommand*{\@fwbw}[1]{\ifcase #1 \or {\rm fw}\or {\rm bw}\fi}
\AddEnumerateCounter{\fwbw}{\@fwbw}
\DeclareMathOperator{\tho}{\text{\rm\textthorn}}
\DeclareMathOperator{\edt}{\eth}
\newcommand{\Lie}{\mathscr{L}}
\newcommand{\II}{\mathbb I}
\newcommand{\mfrak}{\mathfrak{m}}
\newcommand{\afrak}{\mathfrak{a}}
\newcommand{\nfrak}{\mathfrak{n}}
\newcommand{\cfrak}{\mathfrak{c}}
\def\InvSymb{\mathbb{I}}
\begin{document}

\title[Mode analysis for linearized Einstein]{Mode analysis for the linearized Einstein equations on the Kerr metric : the large $\bha$ case.}

\date{\today}

\subjclass[2010]{Primary 83C05, 58J50, Secondary 83C57, 35B40, 83C35}
\keywords{Einstein's equation, mode stability}

\author{Lars Andersson}
\address{Albert Einstein Institute, Am M\"uhlenberg 1, D-14476 Potsdam, Germany}
\email{laan@aei.mpg.de}

\author{Dietrich H\"afner}
\address{Universit\'e Grenoble Alpes, Institut Fourier, 100 rue des maths, 38402 Gi\`eres, France}
\email{dietrich.hafner@univ-grenoble-alpes.fr}

\author{Bernard F. Whiting}
\address{Department of Physics, University of Florida, 2001 Museum Road, Gainesville, FL 32611-8440, USA}
\email{bernard@phys.ufl.edu}

\date{\today}

\begin{abstract}
We give a complete analysis of mode solutions for the linearized Einstein equations and the $1-$form wave operator on the Kerr metric in the large $\bha$ case. By mode solutions we mean solutions of the form $e^{-it_*\sigma}\tilde{h}(r,\theta,\varphi)$ where $t_*$ is a suitable time variable. The corresponding Fourier transformed $1-$form wave operator and linearized Einstein operator are shown to be Fredholm between suitable function spaces and $\tilde{h}$ has to lie in the domain of these operators. These spaces are constructed following the general framework of Vasy \cite{VasyMicroKerrdS}, \cite{Va1}-\cite{Va2} along the lines of \cite{HHV}. No mode solutions exist for ${\Im}\, \sigma\ge 0,\, \sigma\neq 0$. For $\sigma=0$ mode solutions are Coulomb solutions for the $1-$form wave operator and linearized Kerr solutions plus pure gauge terms in the case of the linearized Einstein equations. If we fix a De Turck/wave map gauge, then the zero mode solutions for the linearized Einstein equations lie in a fixed $7-$dimensional space. The proof relies on the absence of modes for the Teukolsky equation shown by the third author in \cite{Wh} and a complete classification of the gauge invariants of linearized gravity on the Kerr spacetime, see \cite{2018PhRvL.121e1104A, 2019arXiv191008756A}. \end{abstract}

\maketitle

\setlength{\parskip}{0.00in}
\tableofcontents
\setlength{\parskip}{0.05in}
\section{Introduction}
There has been important progress in our understanding of stability properties of black hole solutions of the Einstein equations in recent years. Non-linear stability is  known for very slowly rotating Kerr-de Sitter, see Hintz-Vasy \cite{HV}. Recently, non-linear stability has been proved for very slowly rotating Kerr solutions in a series of papers by Giorgi, Klainerman, and  Szeftel \cite{KS1, KS, GKS}. See also  Dafermos, Holzegel, Rodnianski and Taylor \cite{DHRT} for related work. 

All these non linear stability results are based on an a priori understanding of linear stability. Linear stability of the Schwarzschild metric was shown by Dafermos, Holzegel and Rodnianski \cite{DHR2}. We also refer to work of Giorgi \cite{Gi}, Hung, Keller and Wang \cite{HKW} in this context. Linear stability for the Kerr metric was shown by Andersson, B\"ackdahl, Blue and Ma \cite{ABBM}, and by H\"afner, Hintz and Vasy \cite{HHV} for small angular momentum of the black hole. The linear stability of the Kerr metric is closely linked to decay properties of solutions of the Teukolsky equation.  Such results have been obtained by Dafermos, Holzegel and Rodnianski  \cite{DHR1},   Finster and Smoller \cite{FS}, Ma \cite{Ma}, Ma and Zhang \cite{MZ}, Shlapentokh-Rothman and Teixeira da Costa \cite{SRTC}.    See also \cite{IoKl} for related work.

Almost all of the above mentioned results are restricted to small angular momentum. It is expected that most black holes are rapidly rotating \cite{Thorne:1974ve,Ananna:2020ben}. It is thus worth having a closer look where the restriction to small angular momentum comes from. We will use in this paper the setting of the linear stability paper by H\"afner, Hintz and Vasy \cite{HHV}. The result in \cite{HHV} is a result for small angular momentum but, as we will argue in the following, the main missing points to get the result for the full subextreme range of the angular momentum of the black hole are on the level of the analysis of mode solutions. 

To understand this in a bit more detail let us first fix a wave map/de Turck gauge\footnote{See Section \ref{Sec6.2} for a precise definition.} and let $L_g$ be the gauge fixed linearized Einstein operator with this gauge. We consider a suitable time variable $t_*$ which is constant along ingoing resp. outgoing principal null geodesics close to the horizon resp. close to null infinity. To explain how the method works we consider the forcing problem 
\[
  L_g h = f,\qquad t_*\geq 0\ \ \text{on}\ \supp h,\ \supp f,
\]
where $f$ has compact support in $t_*$ and suitable decay (roughly, $r^{-2-\epsilon}$) as $r\to\infty$. The approach is then to take the Fourier transform in $t_*$, giving the representation
\begin{equation}
\label{EqIIFT}
  h(t_*) = \frac{1}{2\pi}\int_{\Im\sigma=C} e^{-i\sigma t_*} \wh{L_g}(\sigma)^{-1}\hat f(\sigma)\,d\sigma,
\end{equation}
initially for $C$ large enough (which gives exponential bounds for $h$). We refer to \cite{Vai} for an introduction to this approach and to \cite{BoHa} for an application of it in a simpler situation. The advantage of taking the time variable $t_*$ rather than the Boyer Lindquist time $t$ is that precise mapping properties of $\wh{L_g}(\sigma)$ are easier to read off, and, more importantly, the analysis near $\sigma=0$ is simplified.

The strategy is then to shift the contour of integration in~\eqref{EqIIFT} to $C=0$, which requires a detailed analysis of $\wh{L_g}(\sigma)$. If $\wh{L_g}(\sigma)^{-1}$ has some suitable regularity properties up to the real axis, then \eqref{EqIIFT} gives decay properties of $h(t_*)$. The better the regularity is, the better is the decay. In our concrete situation however $\wh{L_g}(0)$ has a non trivial kernel and the resolvent  $\wh{L_g}(\sigma)^{-1}$ has to be decomposed into a singular and a regular part. Elements of the kernel of $\wh{L_g}(\sigma)$  are called mode solutions \footnote{Note that the notion of mode solution depends on the exact domain of the operator.} and understanding them is crucial for a precise description of the singular part of  $\wh{L_g}(\sigma)^{-1}$. Concretely the ingredients one needs are the following :   
\begin{enumerate}
\item A robust\footnote{This means that the framework remains unchanged under sufficiently small perturbations of the metric.} Fredholm framework for the operators $\wh{L_g}(\sigma)$. In particular the authors of \cite{HHV} construct suitable function spaces such that the operator $\wh{L_g}(\sigma)$ acts as a Fredholm operator of index $0$ between them. We will show in this paper that this construction can be carried out for all subextreme values of the angular momentum of the black hole. 
\item High energy estimates, i.e. estimates for the resolvent $\wh{L_g}^{-1}(\sigma)$ for large $\vert {\Re} \sigma\vert$ and bounded ${\Im} \sigma$. These estimates only use the structure of the trapping which is $r-$normally hyperbolic. Dyatlov has shown that the trapping in the Kerr metric has the same structure for all 
subextreme values of the angular momentum per unit mass $\bha$ of the black hole (see \cite{Dy}).   
\item Uniform Fredholm estimates down to $\sigma=0$. These estimates only use the asymptotic structure of the metric at infinity and we show in this paper that they hold for all subextreme values of $\bha$. We refer to \cite{Va1}, \cite{Va2} for the method used. 
\item The regularity of the resolvent at $\sigma=0$. Similarly to the previous point the important ingredient here is the asymptotic structure of the metric and we expect that the estimates hold for all subextreme values of $\bha$.  However the study of the precise regularity of the resolvent is postponed to future work.  
\item Mode stability for $L_g$, i.e. invertibility of $\wh{L_g}(\sigma)$ for $\sigma\neq 0,\, \Im\, \sigma\ge 0$ and precise understanding of the $0$ modes (elements of the kernel of $\wh{L_g}(0)$), which, only for small $\bha$, has been obtained in \cite{HHV}.  \end{enumerate} 
Of the steps listed above, it is only mode stability in the sense of point $(5)$ that does not follow for all subextreme values of $\bha$ from the general robust Fredholm framework.\footnote{Note also that a Fredholm setting for the wave equation on the De Sitter Kerr metric was recently established by O. Lindblad Petersen and A. Vasy in the full range of $\bha$, see \cite{LPV}.} The main result of the present paper fills this gap. 

We now give an informal statement of our main theorem. Recall that the Kerr black hole is a 2$-$parameter family of metrics $g_b$, with parameter $b=(\bhm, \bha)$ corresponding to mass and angular momentum per unit mass, respectively. The  linearized Einstein operator $L_g$ in harmonic gauge with respect to the Kerr background is just the Lichnerowicz d'Alembertian acting on symmetric 2$-$tensors, which upon taking a Fourier transform  induces an operator $\wh{L_g}(\sigma)$. A mode solution $\dot g_{ab}$ with frequency $\sigma$ is a solution to $\wh{L_g}(\sigma) \dot g_{ab} = 0$ with boundary conditions corresponding to the absence of radiation entering the black hole exterior. We can now give an informal version of our main theorem. 
\begin{thm}[Mode stability for linearized gravity]\label{ThmMS:intro} Consider the gauge fixed linearized Einstein equation on a subextreme Kerr spacetime. Let $\Im \sigma \geq 0$, and let $\dot g_{ab}$ be a mode solution of the gauge fixed linearized Einstein equation with frequency $\sigma$. Then $\dot g_{ab}$ is, modulo gauge, a perturbation of the Kerr metric with respect to the Kerr parameters. 
\end{thm}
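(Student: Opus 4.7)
The strategy is to reduce mode stability for the linearized Einstein operator to the mode stability of the Teukolsky equation (proved by the third author in \cite{Wh}) by passing through the gauge-invariant extremal Weyl scalars, and then to invoke the classification of gauge invariants of linearized gravity on Kerr from \cite{2018PhRvL.121e1104A, 2019arXiv191008756A}.

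\textbf{Step 1: From the gauged to the ungauged equation.} Given a mode solution $\dot g$ of $\wh{L_g}(\sigma)\dot g = 0$ in the Fredholm domain constructed earlier in the paper, the De Turck gauge-propagation identity implies that the gauge residual $\Upsilon(\dot g)$ itself satisfies the Fourier-transformed $1$-form wave equation on Kerr. The mode analysis of the $1$-form wave operator (the parallel main result of the paper) gives that for $\Im\sigma \geq 0$, $\sigma \neq 0$ the only mode solutions are trivial, while at $\sigma = 0$ they are the Coulomb solutions, which lift to pure-gauge perturbations $\mathcal L_V g_b$. Thus up to an explicit Lie-derivative term $\dot g$ may be assumed to satisfy the ungauged linearized Einstein equations $G'_{g_b}(\dot g) = 0$.

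\textbf{Step 2: Teukolsky reduction.} On solutions of the ungauged linearized Einstein equations, the extremal Weyl-scalar perturbations $\dot\psi_0$ and $\dot\psi_4$, computed in the Kinnersley principal null frame, are gauge-invariant and satisfy the Teukolsky master equations of spin weight $+2$ and $-2$ respectively. The boundary/domain conditions on $\dot g$, once one differentiates twice and contracts with the frame, translate into precisely the mode-solution conditions for $\dot\psi_0, \dot\psi_4$ for which Whiting's theorem applies. One concludes $\dot\psi_0 = \dot\psi_4 = 0$ for $\Im\sigma \geq 0$, $\sigma \neq 0$.

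\textbf{Step 3: Classification and conclusion.} Invoking the classification of gauge invariants of linearized gravity on Kerr from \cite{2018PhRvL.121e1104A, 2019arXiv191008756A}, the simultaneous vanishing of $\dot\psi_0$ and $\dot\psi_4$ forces $\dot g$ to coincide, modulo a Lie derivative of $g_b$, with a tangent vector to the Kerr family at the parameters $(\bhm, \bhvecta)$. For $\sigma \neq 0$ with $\Im\sigma \geq 0$ the stationarity and axisymmetry of the Kerr family forces the corresponding parameter perturbation to vanish, so $\dot g$ is pure gauge, which in turn is excluded in the De Turck setting by Step 1, so $\dot g = 0$. For $\sigma = 0$ one retains the full linearized Kerr family together with the explicit pure-gauge contributions surviving from Step 1; a degree-of-freedom count produces the sharp $7$-dimensional kernel announced in the abstract (one for $\bhm$, three for the angular-momentum vector $\bhvecta$, and three pure-gauge modes).

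\textbf{Main obstacle.} The principal difficulty is Step 1, namely matching the Fredholm domain for the tensorial gauge-fixed operator with the Fredholm domain for the $1$-form wave operator used in the parallel mode analysis, so that the gauge residual $\Upsilon(\dot g)$ is genuinely a mode solution to which that analysis applies; and in Step 2 checking that taking the two derivatives and frame projections that define $\dot\psi_0$ and $\dot\psi_4$ lands in the function spaces required by Whiting's theorem. A secondary subtlety is the bookkeeping at $\sigma=0$ needed to certify that the mixture of Kerr parameter perturbations, linearized Kerr-family deformations, and Coulomb pure-gauge contributions arising from the three previous steps assembles exactly into the $7$-dimensional zero-mode space rather than something larger.
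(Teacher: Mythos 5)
Your overall architecture matches the paper's: reduce the gauge-fixed problem to the ungauged one via the linearized Bianchi identity and the $1$-form mode analysis, kill the extremal Teukolsky scalars, and then invoke the classification of gauge invariants. However, the proposal skips exactly the steps that constitute the new content. First, Whiting's theorem and its real-axis extension only give vanishing of $\boldsymbol{\Phi}_{\pm 2}$ for $\sigma\neq 0$; the case $\sigma=0$, which is precisely where the nontrivial kernel lives, requires a separate argument (in the paper, an explicit analysis of the zero-frequency radial Teukolsky equation as a hypergeometric equation, Theorem \ref{Thm3.1}(2)), and without it your Step 3 has nothing to start from. Second, the classification of gauge invariants does \emph{not} say that a vacuum perturbation with vanishing extremal Teukolsky scalars is, modulo gauge, a linearized Kerr metric: it says it is a linearized Plebanski--Demianski line element, with four parameters $\dot\bhm,\dot\bha,\dot\bhn,\dot\bhc$. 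Eliminating $\dot\bhn$ and $\dot\bhc$ is a substantial step: the paper shows that ${\rm Re}\,\InvSymb_\zeta$ would grow too fast at infinity unless $\dot\bhn=\dot\bhc=0$, and this decay argument requires first establishing a polyhomogeneous expansion $\dot g-\rho g_0\in\cA^{2-}$, which in turn requires correcting $\dot g$ to a solution of the gauge-fixed equation and running a normal operator argument (Lemma \ref{lem5.1}, Propositions \ref{prop5.1} and \ref{propdecayInv}). The classification is moreover local, and globalizing it uses the compatibility complex of \cite{2019arXiv191008756A} together with the Poincar\'e lemma (Lemma \ref{lem:commut}). None of this appears in the proposal.

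Two further points. In Step 1 at $\sigma=0$ the gauge residual $\delta_{g_b}\sfG_{g_b}\dot g$ cannot be ``a Coulomb solution which lifts to pure gauge'': if it equalled a nonzero multiple $c\,\omega_{b,s 0}$ of the Coulomb mode, then $D_{g_b}\Ric(\dot g)=-c\,\delta^*_{g_b}\omega_{b,s 0}\neq 0$, and $\dot g$ would not solve the ungauged equation even after subtracting a pure gauge term. The correct argument is a decay mismatch: $\wh{\delta_{g_b}}(0)\sfG_{g_b}h\in\Hbext^{\infty,1/2-}$ while $\omega_{b,s 0}\notin\Hbext^{\infty,1/2-}$, so the residual vanishes identically. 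Finally, your count $7=1+3+3$ (one for $\dot\bhm$, three for $\dot\bhvecta$, three gauge modes) is wrong: in the De Turck gauge the solvability condition for the gauge corrector, obtained by pairing against $\omega^*_{b,s 0}$, forces $\dot\bhm=0$, and the kernel is spanned by three angular-momentum-type modes $h_{b,v 1}(\vect)$, one Coulomb gauge mode $h_{b,s 0}$, and three asymptotic-translation gauge modes $h_{b,s 1}(\scal)$.
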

See Theorem \ref{ThmMS} below for the precise statement. The result mentioned in point $(5)$ above is a consequence of Theorem \ref{ThmMS:intro} together with ideas developed in application of the robust Fredholm setup for slowly rotating Kerr spacetimes, which are easily adapted to the present case. See section \ref{Sec7} below.

In contrast to the Fredholm analysis, the proof of mode stability requires the precise form of the equations and in most cases uses separation of variables and a delicate analysis of the separated equations. Mode stability can fail in prominent examples such as the Klein-Gordon equation on the Kerr spacetime \cite{SR} or the charged Klein-Gordon field on the De Sitter-Reissner-Nordstr\"om metric \cite{BH}. 

The general strategy for the mode analysis is to first analyse the modes for the linearized Einstein equations without gauge fixing. However to establish linear stability, it will be important to  understand the mode solutions for the gauge fixed Einstein equations. In the wave map/de Turck  gauge, the $1-$form wave operator plays the role of a gauge propagation operator; this explains why the mode analysis for this operator is important in this context. It should however be pointed out that already in the small $\bha$ case, quadratically growing generalized zero modes appear in the usual wave map/de Turck gauge. At least in the small $\bha$ case, these modes can be eliminated by constraint damping via a perturbation argument \cite{HHV}.  

Summarizing the complete analysis of mode solutions divides into the following points:
\begin{enumerate}
\item An analysis of mode solutions of the linearized Einstein operator. 
\item  An analysis of mode solutions of the $1-$form wave operator. 
\item Implementation of constraint damping.  
\item Non-degenerate control of generalized zero energy states. 
\end{enumerate}
We address points $(1)$ and $(2)$ in this paper, $(3),\, (4)$ are postponed to future work.   

Mode analysis for perturbations of the Kerr metric has a long history and as already mentioned can be linked to the mode analysis of the Teukolsky equation.  The central breakthrough in this context was the paper by Whiting in 1989 showing absence of modes with positive imaginary part for the Teukolsky equation for all $\bha$ subextreme \cite{Wh}. We also refer to \cite{CTC} for a new proof of this result. Later, Andersson, Ma, Paganini and Whiting \cite{AMPW} showed the absence of modes also on the real axis for non zero spectral parameter. In the present paper we establish the absence of suitably defined zero frequency modes. 
We also refer to \cite{CTC}, \cite{Hi2} for partial mode stability results in the De Sitter Kerr case.

The $1-$form wave equation in Kerr is not separable, but divergence free solutions of this equation give rise to solutions of the Maxwell equations. When considering solutions of the Maxwell equations or the linearized Einstein equations, one can compute the so called Teukolsky scalars which are solutions of the Teukolsky equation. Whereas this equation doesn't have any mode solutions, this is true neither for the linearized Einstein equation nor for the $1-$form wave equation.  Indeed the linearized Einstein equation will have zero modes consisting of a linearized Kerr metric plus a pure gauge solution, and the $1-$form wave equation will have solutions which correspond to Coulomb solutions for the Maxwell field. However the vanishing of the Teukolsky scalars will be the central information to show that the only mode solutions encountered are the expected ones. Whereas one obtains the mode solutions for the $1-$form wave equation more or less by direct integration, the situation is more complicated for the Einstein case.   

In the Einstein case, the gauge invariants of linearized gravity on the Kerr spacetime will play a crucial role in our proof. They have been completely classified in  \cite{2018PhRvL.121e1104A, 2019arXiv191008756A}. It is a remarkable fact that these gauge invariants have, for vanishing extreme Teukolsky scalars, exactly the form that they have for the Plebanski-Demianski family of line elements, parametrized by $\mfrak,\, \afrak,\, \cfrak$ and $\nfrak$, see \cite{Aksteiner:GI}. The set of gauge invariants is complete, see \cite{2019arXiv191008756A}. It follows that locally the perturbation to consider is up to gauge a linearized Plebanski-Demianski line element. We make this argument global by using the setting of \cite{2019arXiv191008756A} and the Poincar\'e lemma. We then have to show that the nut parameter perturbation $\dot \nfrak$ and the acceleration parameter perturbation $\dot \cfrak$ are zero to argue that the solution is a linearized Kerr solution plus a pure gauge term.  For a solution that decays like $\cO(r^{-1})$ at infinity, it follows directly from decay considerations for the gauge invariants that $\dot \nfrak$ and $\dot \cfrak$ are zero.  However, this fall-off is more than what we want to impose, a priori, in our functional setting. The remedy is a normal operator argument that gives a certain polyhomogeneous expansion of our mode solution. This argument however can't be applied directly to the linearized Einstein equation but only for the gauged fixed one. We therefore first correct the solution of the linearized Einstein equation without gauge fixing to a solution of the gauge fixed linearized Einstein equation by adding a linearized Kerr metric plus gauge term. 
 
The paper is organized as follows:
\begin{enumerate}
\item In Section \ref{SB} we introduce b$-$ and scattering structures as well as the corresponding Sobolev spaces. 
\item Section \ref{SecKerr} is devoted to the description of the Kerr metric. 
\item In Section \ref{Sec4} we summarize the existing results on the Teukolsky  equation and show the absence of zero modes. We also describe polyhomogeneous expansions of the
mode solutions and give some geometric background on the Teukolsky equation. 
\item Section \ref{Sec1form} gives the results for the $1-$form wave operator. We explain the link to the Maxwell equation and show that the only possible modes are Coulomb type solutions. 
\item Section \ref{SecEinst} is devoted to the analysis of the linearized Einstein equations. For spectral parameter different from zero no mode solutions exist. Zero modes are found to be linearized Kerr solutions plus pure gauge terms. 
\item Eventually Section \ref{Sec7} is devoted to the analysis of the gauge fixed linearized Einstein operator using the usual wave map/De Turck gauge. Again for spectral parameter different from zero no mode solutions exist. Zero modes are linearized Kerr plus gauge term. The gauge term lies in a fixed $7-$dimensional space characterized by a three dimensional space of asymptotic translations, a Coulomb type solution and asymptotic rotations. Our results are analogous to those obtained in \cite{HHV} in the case of small angular momentum of the black hole.  
 
\end{enumerate}

{\bf Acknowledgments.}
 This project was started when discussing with Andr\'as Vasy about the possibility of generalizing the linear stability result of \cite{HHV} to the large $\bha$ case, we are very grateful for his input. We are also very grateful to Steffen Aksteiner and Thomas B\"ackdahl for allowing us to use their argument which gives Proposition \ref{prop6.5}. Many thanks also to Steffen Aksteiner, Peter Hintz and Pascal Millet for very fruitful discussions on this paper. This project was started during the program "General Relativity, Geometry and Analysis: beyond the first 100 years after Einstein"  at the Mittag Leffler Institute in Stockholm in 2019 and we are grateful to this Institute for hospitality.   

\section{$b$ and scattering structures}
\label{SB}

We first discuss geometric structures on manifolds with boundaries or corners, and corresponding function spaces. Thus, let $X$ be a compact $n-$dimensional manifold with boundary $\pa X\neq\emptyset$, and let $\rho\in\CI(X)$ denote a boundary defining function: $\pa X=\rho^{-1}(0)$, $d\rho\neq 0$ on $\pa X$. We then define the Lie algebras of \emph{$b-$vector fields} and \emph{scattering vector fields} by
\begin{equation}
\label{EqBVf}
  \Vb(X) = \{ V\in\cV(X) \colon V\ \text{is tangent to}\ \pa X \}, \quad
  \Vsc(X) = \rho\Vb(X).
\end{equation}
In local \emph{adapted coordinates} $x\geq 0$, $y\in\R^{n-1}$ on $X$, with $x=0$ locally defining the boundary of $X$ (thus $\rho=a(x,y)x$ for some smooth $a>0$), elements of $\Vb(X)$ are of the form $a(x,y)x\pa_x+\sum_{i=1}^{n-1}b^i(x,y)\pa_{y^i}$, with $a,b^i\in\CI(X)$, while elements of $\Vsc(X)$ are of the form $a(x,y)x^2\pa_x+\sum_{i=1}^{n-1}b^i(x,y) x\pa_{y^i}$. Thus, there are natural vector bundles
\[
  \Tb X\to X, \quad \Tsc X \to X,
\]
with local frames given by $\{x\pa_x,\pa_{y^i}\}$ and $\{x^2\pa_x,x\pa_{y^i}\}$ resp., such that $\Vb(X)=\CI(X;\Tb X)$ and $\Vsc(X)=\CI(X;\Tsc X)$; thus, for example, $x\pa_x$ is a \emph{smooth, non-vanishing} section of $\Tb X$ down to $\pa X$. Over the interior $X^\circ$, these bundles are naturally isomorphic to $T X^\circ$, but the maps $\Tb X\to T X$ and $\Tsc X\to T X$ fail to be injective over $\pa X$. We denote by $\Diffb^m(X)$, resp.\ $\Diffsc^m(X)$ the space of $m-$th order b$-$, resp.\ scattering differential operators, consisting of linear combinations of up to $m-$fold products of elements of $\Vb(X)$, resp.\ $\Vsc(X)$.

The dual bundles $\Tb^*X\to X$ (b$-$cotangent bundle), resp.\ $\Tsc^*X\to X$ (scattering cotangent bundle) have local frames
\[
  \frac{d x}{x},\ d y^i,\quad \text{resp.}\quad \frac{d x}{x^2},\ \frac{d y^i}{x},
\]
which are \emph{smooth} down to $\pa X$ as sections of these bundles (despite their being singular as standard covectors, i.e.\ elements of $T^*X$). A \emph{scattering metric} is then a section $g\in\CI(X;S^2\,\Tsc^*X)$ which is a non-degenerate quadratic form on each scattering tangent space $\Tsc_p X$, $p\in X$; b$-$metrics are defined analogously.

These structures arise naturally on compactifications of non-compact manifolds, the simplest example being the \emph{radial compactification of $\R^n$}, defined by
\begin{equation}
\label{EqBRadCp}
  \ol{\R^n} := \bigl(\R^n \sqcup ([0,1)_\rho\times\Sph^{n-1})\bigr) / \sim
\end{equation}
where the relation $\sim$ identifies a point in $\R^n\setminus\{0\}$, expressed in polar coordinates as $r\omega$, $r>0$, $\omega\in\Sph^{n-1}$, with the point $(\rho,\omega)$ where we can choose
\[
  \rho=r^{-1};
\]
this has a natural smooth structure, with smoothness near $\pa\ol{\R^n}=\rho^{-1}(0)$ meaning smoothness in $(\rho,\omega)$. In polar coordinates in $r>1$, the space of b$-$vector fields is then locally spanned over $\CI(\ol{\R^n})$ by $\rho\pa_\rho=-r\pa_r$ and $\cV(\Sph^{n-1})$; scattering vector fields are spanned by $\rho^2\pa_\rho=-\pa_r$ and $\rho\cV(\Sph^{n-1})$. Using standard coordinates $x^1,\ldots,x^n$ on $\R^n$, scattering vector fields on $\ol{\R^n}$ are precisely those of the form
\[
  \sum_{i=1}^n a^i\pa_{x^i}, \quad a^i\in\CI(\ol{\R^n});
\]
this entails the statement that $\{\pa_{x^1},\ldots,\pa_{x^n}\}$, which is a frame of $T^*\R^n$, extends by continuity to a smooth frame of $\Tsc^*\ol{\R^n}$ down to $\pa\ol{\R^n}$. Thus, the space of scattering vector fields on $\ol{\R^n}$ is generated over $\CI(\ol{\R^n})$ by constant coefficient (translation-invariant) vector fields on $\R^n$. On the other hand, $\Vb(\ol{\R^n})$ is spanned over $\CI(X)$ by vector fields on $\R^n$ with coefficients which are \emph{linear} functions, i.e.\ by $\pa_{x^1},\ldots,\pa_{x^n}$, and $x^i\pa_{x^j}$, $1\leq i,j\leq n$.

On the dual side, $\Tsc^*\ol{\R^n}$ is spanned by $d x^i$, $1\leq i\leq n$, \emph{down to} $\pa\ol{\R^n}$. Therefore, a scattering metric $g\in\CI(\ol{\R^n},S^2\,\Tsc^*\ol{\R^n})$ is a non-degenerate linear combination of $d x^i\otimes_s d x^j=\half(d x^i\otimes d x^j+d x^j\otimes d x^i)$ with $\CI(\ol{\R^n})$ coefficients. In particular, the Euclidean metric
\[
  (d x^1)^2+\cdots+(d x^n)^2 \in \CI(\ol{\R^n};S^2\,\Tsc^*\ol{\R^n})
\]
is a Riemannian scattering metric.

By extension from $T^*X^\circ$, one can define Hamilton vector fields $H_p$ of smooth functions $p\in\CI(\Tsc^*X)$. In fact $H_p\in\Vsc(\Tsc^*X)$ is a scattering vector field on $\Tsc^*X$, which is a manifold with boundary $\Tsc^*_{\pa X}X$. (Likewise, if $p\in\CI(\Tb^*X)$, then $H_p\in\Vb(\Tb^*X)$.) For us, the main example will be the Hamilton vector field $H_G$ where $G(z,\zeta):=|\zeta|_{g_z^{-1}}^2$ is the dual metric function of a scattering metric $g\in\CI(X;S^2\,\Tsc^*X)$.

We next introduce Sobolev spaces corresponding to b$-$ and scattering structures. As an integration measure on $X$, let us use a \emph{scattering density}, i.e.\ a positive section of $\Omegasc^1 X=|\Lambda^n\,\Tsc^*X|$, which in local adapted coordinates takes the form $a(x,y)|\tfrac{d x}{x^2}\frac{d y}{x^{n-1}}|$ with $0<a\in\CI(X)$. (On $\ol{\R^n}$, one can take $|d x^1\cdots d x^n|$.) This provides us with a space $L^2(X)$; the norm depends on the choice of density, but all choices lead to equivalent norms. Working with a b$-$density on the other hand would give a different space, namely a weighted version of $L^2(X)$; we therefore stress that even for b$-$Sobolev spaces, we work with a scattering density. Thus, for $m\in\N_0$, we define
\[
  H_\bullet^m(X) := \{ u\in L^2(X) \colon V_1\cdots V_j u\in L^2(X)\ \forall\,V_1,\ldots,V_j\in\cV_\bullet(X),\ 0\leq j\leq m \},\quad
  \bullet=\bop,\scop,
\]
called \emph{b$-$} or \emph{scattering Sobolev space}. Using a finite spanning set in $\cV_\bullet(X)$, one can give this the structure of a Hilbert space; $H_\bullet^m(X)$ for general $m\in\R$ is then defined by duality and interpolation. If $q\in\R$, we denote weighted Sobolev spaces by
\[
  H_\bullet^{m,q}(X) = \rho^q H_\bullet^m(X) = \{ \rho^q u \colon u\in H_\bullet^m(X) \}.
\]
For example, $\Hsc^{m,q}(\ol{\R^n})\cong\la x\ra^{-q}H^m(\R^n)$ is the standard weighted Sobolev space on $\R^n$. The space of weighted ($L^2-$)conormal functions, $\Hb^{\infty,q}$, on $X$ is defined as
\[
  \Hb^{\infty,q}(X) = \bigcap_{m\in\R}\Hb^{m,q}(X).
\]
Dually, we define
\[
  \Hb^{-\infty,q}(X) = \bigcup_{m\in\R}\Hb^{m,q}(X).
\]
Note that $\Hb^{m,q}(X)\subset\cC^{-\infty}(X):=\CIdot(X)^*$ (where $\CIdot(X)\subset\CI(X)$ is the subspace of functions vanishing to infinite order at $\pa X$) consists of tempered distributions. (In particular, they are extendible distributions at $\pa X$ in the sense of \cite[Appendix B]{Ho}.) We furthermore introduce the notation
\begin{equation}
\label{EqBHsPM}
  H_\bop^{m,q+} := \bigcup_{\eps>0} H_\bop^{m,q+\eps},\quad
  H_\bop^{m,q-} := \bigcap_{\eps>0} H_\bop^{m,q-\eps}
\end{equation}
for $m\in\R\cup\{\pm\infty\}$. A space closely related to $\Hb^{\infty,q}(X)$ is
\[
  \cA^q(X) := \{ u\in\rho^q L^\infty(X) \colon \Diffb(X)u\subset\rho^q L^\infty(X) \},
\]
consisting of \emph{weighted $L^\infty-$conormal functions}. For $X=\ol{\R^3}$, we have the inclusions
\[
  \Hb^{\infty,q}(\ol{\R^3}) \subset \cA^{q+3/2}(\ol{\R^3}), \quad
  \cA^q(\ol{\R^3}) \subset \Hb^{\infty,q-3/2-}(\ol{\R^3}),
\]
by Sobolev embedding. (The shift $\tfrac32$ in the weight is due to our defining b$-$Sobolev spaces with respect to scattering densities; indeed, for $m>\tfrac32$,
\begin{equation}
\label{EqBHbSobEmb}
  \Hb^{m,q}(\ol{\R^3};|d x^1\,d x^2\,d x^3|)=\Hb^{m,q+3/2}(\ol{\R^3};\la r\ra^{-3}|d x^1\,d x^2\,d x^3|) \hra \la r\ra^{-q-3/2}L^\infty(\ol{\R^3}),
\end{equation}
with the second density here being a b$-$density on $\ol{\R^3}$.) We define $\cA^{q+}$ and $\cA^{q-}$ analogously to~\eqref{EqBHsPM}. These notions extend readily to sections of rank $k$ vector bundles $E\to X$: for instance, in a local trivialization of $E$, an element of $H_\bullet^{m,q}(X,E)$ is simply a $k-$tuple of elements of $H_\bullet^{m,q}(X)$.

Suppose now $X'$ is a compact manifold with boundary, and let $X\subset X'$ be a submanifold with boundary. Suppose that its boundary decomposes into two non-empty sets
\begin{equation}
\label{EqBInt}
  \pa X = \pa_-X \sqcup \pa_+X,\qquad \pa_-X=\pa X\setminus\pa X',\quad \pa_+X=\pa X';
\end{equation}
we consider $\pa_+X$ to be a boundary `at infinity', while $\pa_-X$ is an interior, `artificial' boundary. Concretely, this means that we define (by a slight abuse of notation)
\[
  \Vb(X) := \{ V|_X \colon V\in\Vb(X') \}, \quad
  \Vsc(X) := \{ V|_X \colon V\in\Vsc(X') \};
\]
these vector fields are b or scattering at infinity, but are unrestricted at $\pa_-X$. A typical example is $X'=\ol{\R^n}$ and  $X=\ol{\{r\geq r_0>0\}}\subset X'$, in which case $\pa_-X=\{r=r_0\}$, while $\pa_+X=\pa X'$ is the boundary (at infinity) of $\ol{\R^n}$. See Figure~\ref{FigBInt}.

\begin{figure}[!ht]
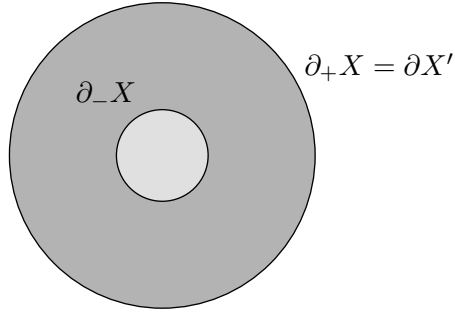

  \centering
\inclfig{FigBInt}
  \caption{A typical example of the setting~\eqref{EqBInt}: $X$ (dark gray) is a submanifold of $X'$ (the union of the dark and light gray regions) with two boundary components $\pa_+X=\pa X'$ and $\pa_-X\subset(X')^\circ$. We then consider function spaces such as $\Hbext^{m,q}(X)$, which measure b$-$regularity of degree $m$ at $\pa_+X$ (with decay rate $q$), and standard regularity (regularity with respect to incomplete vector fields) at $\pa_-X$.}
\label{FigBInt}
\end{figure}

There are now two natural classes of Sobolev spaces: those consisting of \emph{extendible distributions},
\begin{equation}
\label{EqBExt}
  \Hext_\bullet^{m,q}(X) := \{ u|_{X^\circ} \colon u\in H_\bullet^{m,q}(X') \},\quad \bullet=\bop,\scop,
\end{equation}
and those consisting of \emph{supported distributions},
\begin{equation}
\label{EqBSupp}
  \Hsupp_\bullet^{m,q}(X) := \{ u \colon u\in H_\bullet^{m,q}(X'),\ \supp u\subset X \}.
\end{equation}
Away from $\pa_-X$, these are the same as the standard spaces $H_\bullet^{m,q}(X)$; thus, the subspaces of $\Hext_\bullet^{m,q}(X)$ or $\Hsupp_\bullet^{m,q}(X)$ consisting of those elements which are polyhomogeneous (in particular automatically conormal) at $\pa_+X$ are well-defined.

\section{Geometric background}
In this section we introduce the Kerr family of metrics and some of the geometrical structures we will use in later sections. 
\label{SecKerr}
\subsection{The Kerr family}
We shall consider the Kerr family of solutions to the vacuum Einstein equation, parametrized by $b=(\bhm,\bha)\in\R^+\times\R$, denoting mass and angular momentum per unit mass, respectively. We restrict to the subextreme case $\vert \bha\vert< \bhm$. Let 
\begin{equation*}
r_\pm:=\bhm \pm\sqrt{\bhm^2-\bha^2}. 
\end{equation*}
For given $b=(\bhm,\bha)$, we put $b_0=(\bhm,0)$. 
Let $(\theta, \phi) \in [0,\pi] \times [0,2\pi]$ be coordinates on $\Sph^2$. The Kerr metric in Boyer-Lindquist coordinates $(t,r,\theta,\varphi) \in \mathbb{R} \times (r_+ , \infty) \times \Sph^2$ is    
\begin{subequations}
\label{EqKaMetric}
\begin{align}
  g_b^{\rm BL} &= \frac{\Delta_b}{\varrho_b^2}(d t-\bha\sin^2\theta\,d\varphi)^2 - \varrho_b^2\Bigl(\frac{d r^2}{\Delta_b}+d\theta^2\Bigr) - \frac{\sin^2\theta}{\varrho_b^2}\bigl(\bha\,d t-(r^2+\bha^2)d\varphi\bigr)^2, \\
  \intertext{with inverse} 
G_b^{\rm BL} &= \frac{1}{\Delta_b\varrho_b^2}\bigl((r^2+\bha^2)\pa_t+\bha\pa_\varphi\bigr)^2 - \frac{\Delta_b}{\varrho_b^2}\pa_r^2 - \frac{1}{\varrho_b^2}\pa_\theta^2 - \frac{1}{\varrho_b^2\sin^2\theta}(\pa_\varphi+\bha\sin^2\theta\,\pa_t)^2, \\
  \quad \Delta_{b} &= r^2-2\bhm r+\bha^2, \ \ 
  \varrho_{b}^2 = r^2+\bha^2\cos^2\theta. 
  \end{align}
\end{subequations}
Then $g_b^{\rm BL}$ is the metric of an isolated, rotating, stationary black hole with event horizon $\{ r=r_+ \}$, and domain of outer communications is $\{r > r_+\}$. Further, $\{r=r_-\}$ is the inner horizon.  
We have  $\sqrt{|\det g_b|} = \varrho_b^2 \sin\theta$. The Kerr metric is a solution of the Einstein vacuum equation:
\begin{equation}
\label{EqKaEinstein}
  \Ric\bigl(g_{b}^{\rm BL}\bigr)=0.
\end{equation}
The form~\eqref{EqKaMetric} of the metric breaks down at $r_{\pm}$ which are the roots of $\Delta_b$. Let 
\begin{equation*}
r_*=\int \frac{r^2+\bha^2}{\Delta_b} dr\quad \mbox{with}\quad r_*(3\bhm)=0. 
\end{equation*}
Consider coordinates $(t_*,r,\varphi_*,\theta)$, where $t_*(t,r)=t+F(r)$ and $\varphi_*(t,r)=\varphi+T(r)$ are smooth functions such that 
\begin{eqnarray*}
F(r)&=&\left\{\begin{array}{c} r_*\quad\mbox{for}\, r\le 3\bhm,\\
-r_*\quad\mbox{for}\, r\ge 4\bhm, \end{array}\right.
\end{eqnarray*}
\begin{eqnarray*}
T(r)&=&\left\{\begin{array}{c} \int \frac{a}{\Delta_b}\quad\mbox{for}\, r\le 3\bhm,\, T(3\bhm)=0,\\
0\quad\mbox{for}\, r\ge 4\bhm. \end{array}\right.
  \end{eqnarray*}
For $r\le 3\bhm$ the metric then takes the following form 
\begin{equation*}
g_{b,*}=\frac{\Delta_b}{\varrho_b^2}(dt_*-\bha\sin^2\theta d\varphi_*)^2-2(dt_*-\bha\sin^2\theta d\varphi_*)dr-\varrho_b^2d\theta^2-\frac{\sin^2\theta}{\varrho_b^2}(\bha dt_*-(r^2+\bha^2)d\varphi_*)^2,
\end{equation*}  
which is clearly smooth up to $r_+$.

Given
\begin{equation}
\label{eqr0}
r_- < r_0 < r_+ ,
\end{equation}
let
$$
X^0_b = [r_0,\infty)\times \Sph^2 
$$
and consider 
\begin{align}
\label{MX}
M^\circ_b = \R \times X^0_b, 
\end{align}
with coordinates $t_*, r, \theta, \varphi_*$. 
We compactify $X^0_b$ as follows: recalling the definition of $\ol{\R^3}$ from \eqref{EqBRadCp}, we set
\begin{equation*}
X:=\ol{X^0_b}\subset\ol{\R^3},\, \rho:=\frac{1}{r}.
\end{equation*}
 Thus, $X=\ol{\{r\geq r_0\}}$, and we let $\pa_-X=r^{-1}(r_0)$, $\pa_+X=\pa\ol{\R^3}\subset X$. Within $X$, the topological boundary of $\cX=(r_+,\infty)\times\Sph$ has two components,
\begin{equation}
\label{EqK0StaticBdy}
  \pa\cX = \pa_-\cX \sqcup \pa_+\cX,\qquad
  \pa_-\cX := r^{-1}(r_+), \quad
  \pa_+\cX := \rho^{-1}(0) = \pa_+ X.
\end{equation}
Note that $\pa_-X$ is distinct from $\pa_-\cX$, and is indeed a hypersurface lying \emph{beyond} the event horizon. Note that $\R \times\pa X$ has two components,
\begin{equation}
\label{EqK0SurfFin}
  \Sigma_{\rm fin} := \R \times\pa_- X
\end{equation}
(which is a spacelike hypersurface inside of the black hole) and $\R_{t_*}\times\pa_+ X=\R_{t_*}\times\pa_+\cX$ (which is future null infinity, typically denoted $\scri^+$); moreover, the future event horizon, $\cH^+$, is $\R_{t_*}\times\pa_-\cX$.

\begin{figure}[!ht]
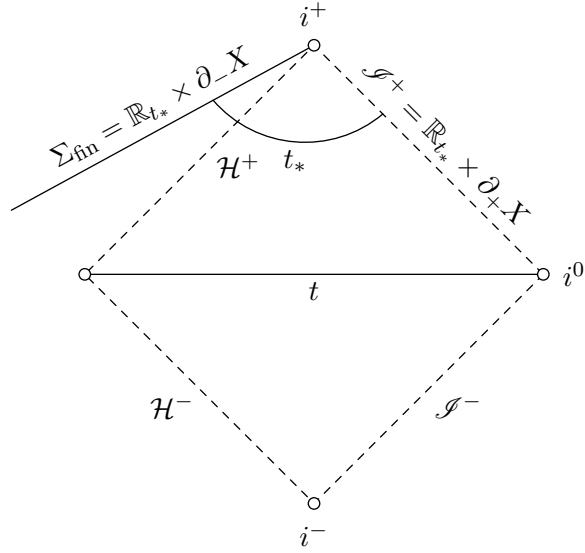

  \centering
    \inclfig{FigK0Time_bagrand}
  \caption{Illustration of time functions on $M^\circ_b$ in the Penrose diagram of the Kerr metric (including future/past null infinity $\scri^\pm$, the future/past event horizon $\cH^\pm$, spacelike infinity $i^0$, and future/past timelike infinity $i^\pm$). Shown are level sets of  the functions $t$ and $t_*$. We also indicate the boundaries $\R_{t_*}\times\pa_+ X$ (future null infinity again) and $\R_{t_*}\times\pa_- X$ (a spacelike hypersurface beyond the future event horizon).}  
\label{FigK0Time}
\end{figure}
Concerning the inverse metric we obtain for $r\le 3\bhm$ 
\begin{align*}
\label{INH}
G&=-\frac{\bha^2\sin^2\theta}{\varrho_b^2}\partial_{t_*}^2-2\frac{\bha}{\varrho_b^2}\partial_{\varphi_*}\partial_{t_*}-2\frac{r^2+\bha^2}{\varrho_b^2}\partial_r\partial_{t_*}-2\frac{\bha}{\varrho_b^2}\partial_{\varphi_*}\partial_r-\frac{1}{\varrho_b^2\sin^2\theta}\partial_{\varphi_*}^2\\
&-\frac{\Delta_b}{\varrho_b^2}\partial_r^2-\frac{1}{\varrho_b^2}\partial_{\theta}^2. 
\end{align*}     
For $r\ge 4\bhm$ we obtain 
\begin{align*}
G&=-\frac{\bha^2\sin^2\theta}{\varrho_b^2}\partial_{t_*}^2+4\frac{a\bhm r}{\varrho_b^2\Delta_b}\partial_{\varphi_*}\partial_{t_*}-\frac{\Delta_b-\bha^2\sin^2\theta}{\varrho_b^2\Delta_b\sin^2\theta}\partial_{\varphi_*}^2-2\rho^2\frac{r^2+\bha^2}{\varrho_b^2}\partial_{\rho}\partial_{t_*}\\
&-\frac{\rho^4}{\varrho_b^2}\Delta_b\partial_{\rho}^2-\frac{1}{\varrho_b^2}\partial_{\theta}^2. 
\end{align*}
We will also have to consider linearized Kerr metrics which are given by 
\begin{equation}\label{gdoteqn}
\dot{g}_b(\dot{b})=\left.\frac{d}{ds}\right\vert_{s=0}g_{b+s\dot{b}},\, \dot{b}\in \R^2.
\end{equation}
They fulfill the linearized Einstein equations 
\begin{align*}
D_{g_b}Ric(\dot{g}_b(\dot{b}))=0.
\end{align*}

\begin{rmk} \label{rem:orbit}
The explicit forms of the Kerr metric introduced above represent a rotating, isolated black hole at rest at the center of coordinates, with axis of rotation aligned with the axial Killing field $\partial_\varphi$. Acting on the Kerr black hole with global Poincar\'e transformations, corresponding to a change of reference frame, leads to a family of translated, boosted and rotated black holes. By analogy with a relativistic, massive, spinning particle, the linear and angular momenta of the Kerr black hole take values in a coadjoint orbit of the Poincar\'e group, determined by the parameters $\bhm, \bha$. Linearized perturbations of the Kerr metric are solutions of the linearized vacuum Einstein equation on the Kerr background, 
\begin{align} 
D_{g_b}\Ric \dot g = 0 . 
\end{align} 
In view of the above remarks, we see that $\ker D_{g_b}\Ric$ contains a subspace corresponding to the infinitesmal action of the Poincare group on the Kerr black hole, i.e. to the tangent space of the coadjoint orbit. In particular,  $\ker D_{g_b}\Ric$ contains perturbations that change the axis of rotation of the black hole. However, as we shall see in sections \ref{SecEinst}, \ref{Sec7} below, these are pure gauge, except for perturbations $\dot\bhm, \dot \bha$ of the two Kerr parameters.  
\end{rmk}

\begin{rmk}
All pairings will be defined with respect to the natural inner product on $L^2(X^0_b)$, i.e. 
\begin{equation*}
\la f,g\ra:=\int_{X^0_b}fg\varrho_b^2\sin\theta drd\theta d\varphi_*
\end{equation*}
for functions and 
\begin{equation*}
\la\omega,\nu\ra:=\int_{X^0_b}\omega^a\nu_a \varrho_b^2\sin\theta drd\theta d\varphi_*
\end{equation*}
for $1-$forms. 
\end{rmk}
\subsection{Stationarity, vector bundles, and geometric operators}
In the notation~\eqref{MX}, denote the projection to the spatial manifold by
\[
  \pi_X \colon M^\circ_b \to X^\circ_b.
\]

Suppose $E_1\to X^\circ_b$ is a vector bundle; then differentiation along $\pa_{t_*}$ is a well-defined operation on sections of the pullback bundle $\pi_X^*E_1$. The tangent bundle of $M^\circ_b$ is an important example of such a pullback bundle, as
\[
  T M^\circ_b \cong \pi_X^*(T_{t_*^{-1}(0)}M^\circ_b),
\]
likewise for the cotangent bundle and other tensor bundles.

Let $E_2\to X^\circ_b$ be another vector bundle, and suppose $\wh L(0)\in\Diff(X^\circ_b;E_1,E_2)$ is a differential operator; fixing $\ft=t_*+F$, $F\in\CI(X^\circ_b)$, we can then define its \emph{stationary extension} by assigning to $u\in\CI(M^\circ;\pi_X^*E_1)$ the section $(L u)(\ft,-):=\wh L(0)(u(\ft,-))$ of $\pi_X^*E_2$; this extension \emph{does} depend on the choice of $\ft$. The action of $L$ on stationary functions on the other hand is independent of the choice of $\ft$ since
\begin{equation}
\label{EqKStStat}
  L\pi_X^*=\pi_X^*\wh L(0).
\end{equation}
Via stationary extension, one can consider $\Diffb(X;E)$ (for $E\to X$ a smooth vector bundle down to $\pa_+ X$) to be a subalgebra of $\Diff(M^\circ_b;\pi_X^*E)$; likewise $\Diffsc(X;E)\hra\Diff(M^\circ_b;\pi_X^*E)$.

Conversely, if $L\in\Diff(M^\circ_b;\pi_X^*E_1,\pi_X^*E_2)$ is \emph{stationary}, i.e.\ commutes with $\pa_{t_*}$, there exists a unique (independent of the choice of $\ft$) operator $\wh L(0)\in\Diff(X^\circ_b;E_1,E_2)$ such that the relation~\eqref{EqKStStat} holds. More generally, we can consider the formal conjugation of $L$ by the Fourier transform in $\ft$,
\[
  \wh L(\sigma) := e^{i\sigma\ft}L e^{-i\sigma\ft} \in \Diff(X^\circ_b;E_1,E_2),
\]
where we identify the stationary operator $e^{i\sigma\ft}L e^{-i\sigma\ft}$ with an operator on $X^\circ_b$. Switching from $\ft$ to another time function, $\ft+F'$, $F'\in\CI(X^\circ)$, amounts to conjugating $\wh L(\sigma)$ by $e^{i\sigma F'}$.

In order to describe the uniform behavior of geometric operators at $\pa_+X$ concisely, we need to define a suitable extension of $T^*M^\circ_b$ to `infinity'. To accomplish this, note that the product decomposition~\eqref{MX} induces a splitting 
\[
  T^*M^\circ_b \cong 
  \pi_T^*(T^*\R_{t_*}) \oplus \pi_X^*(T^*X^\circ_b),
\]
where $\pi_T\colon M^\circ_b\to\R_{t_*}$ is the projection. We therefore define the \emph{extended scattering cotangent bundle} of $X$ by
\begin{equation}
\label{EqKStExtTsc}
  \wt\Tsc{}^*X := \R d t_* \oplus \Tsc^*X.
\end{equation}
At this point, $d t_*$ is merely a name for the basis of a trivial real rank $1$ line bundle over $X$; considering the pullback bundle $\pi_X^*\wt\Tsc{}^*X\to M^\circ_b$, we identify it with the differential of $t_*\in\CI(M^\circ_b)$, giving an isomorphism
\begin{equation}
\label{EqKStExtTscIso}
  (\pi_X^*\wt\Tsc{}^*X)|_{M^\circ_b} \cong T^*M^\circ_b.
\end{equation}
Smooth sections of $\wt{\Tsc^*}X\to X$ are linear combinations, with $\CI(X)$ coefficients, of $d t_*$ and the 1-forms $d x^i$, where $(x^1,x^2,x^3)$ are standard coordinates on $X^\circ_b\subset\R^3$. 
For a stationary metric $g$ on $M^\circ_b$, there exists a unique $g'\in\CI(X^\circ_b;S^2\,\wt\Tsc{}^*X)$ such that $\pi_X^*g'=g$, namely $g'$ is the restriction (as a section of $S^2\,\wt\Tsc{}^*X$) of $g$ to any transversal of $\pi_X$, such as level sets of $t_*$. Identifying $g$ with $g'$ and applying this to the Kerr family, we then have 
\begin{equation}
\label{EqKStMetrics}
  g_b \in \CI(X;S^2\,\wt\Tsc{}^*X);
\end{equation}
it is non-degenerate down to $\pa_+ X$. Let $\slg$ be the standard metric on the sphere. Then we have
\begin{equation}
\label{EqKStMetrics2}
\begin{split}
  &g_{b_0} - \ubar g \in \rho\CI, \quad \ubar g:=d t^2-d r^2-r^2\slg, \\
  &g_{(\bhm,\bha)}-g_{(\bhm,0)} \in \rho^2\CI,
\end{split}
\end{equation}
i.e.\ a Kerr metric equals the Minkowski metric $\ubar g$ to leading order, and is a $\cO(\rho^2)$ perturbation of the Schwarzschild metric of the same mass. 
We proceed to discuss basic geometric operators on Kerr spacetimes. We write
\begin{equation}
\label{EqKStDel}
  (\delta_g^*\omega)_{\mu\nu}=\half(\omega_{\mu;\nu}+\omega_{\nu;\mu}), \quad
  (\delta_g h)_\mu=-h_{\mu\nu;}{}^\nu, \quad
  \sfG_g = 1-\half g\,\mathrm{tr}_g,
\end{equation}
and furthermore denote by
\begin{equation}
\label{EqKStWave}
  \Box_{g,0},\ \Box_{g,1},\ \Box_{g,2},
\end{equation}
the wave operator $-\!\tr_g\nabla^2$ on scalars, 1-forms, and symmetric 2-tensors, respectively. These are sections of bundles with fiber dimension $1,4$ and $10$ resp. When the bundle is clear from the context, we shall simply write $\Box_g$.

When $g$ is the Kerr metric, then $g|_{\Tsc X\times\Tsc X}=-\hat{h}+\rho\CI$ is to leading order equal to the Euclidean metric $\hat{h}=(d x^1)^2+(d x^2)^2+(d x^3)^2$ on $\R^3$ (equipped with standard coordinates $(x^1,x^2,x^3)$ on $\R^3\setminus B(0,3\bhm)\cong X^\circ\setminus\{r<3\bhm\}$). Thus, the leading order terms at $\rho=0$ are simply those of the corresponding operators on Minkowski space $\R^4=\R_t\times\R^3_x$ with metric
\begin{equation}
\label{EqKStMink}
  \ubar g = d t^2-d x^2.
\end{equation}
But the latter take a very simple form in the standard coordinate trivialization of $\wt\Tsc{}^*X$ by $d t$, $d x^i$, $i=1,2,3$. We have (see \cite[Lemma 3.4]{HHV}):

\begin{lemma}
\label{LemmaKStNormal}
  Let $N_0=1$, $N_1=4$, $N_2=10$. For $g=g_{(\bhm,\bha)}$, we have
  \[
    \wh{\Box_{g,j}}(0) - \wh{\Box_{\ubar g,j}}(0) \in \rho^3\Diffb^2,
  \]
  where $\Box_{\ubar g}$ is the scalar wave operator on Minkowski space, given by $\Box_{\ubar g,j}=\Box_{\ubar g}\otimes 1_{N_j\times N_j}$ in the standard coordinate basis. Likewise,
  \[
    \delta_g^* - \delta_{\ubar g}^* \in \rho^2\Diffb^1, \quad
    \delta_{g_{(\bhm,\bha)}}^*-\delta_{g_{(\bhm,0)}}^* \in \rho^3\Diffb^1.
  \]
\end{lemma}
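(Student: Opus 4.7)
The plan is to fix the scattering frame of $\wt\Tsc{}^* X$ (and, by~\eqref{EqKStExtTscIso}, of $T^* M_b^\circ$) given by $d t_*$ together with the standard coordinate differentials $d x^1, d x^2, d x^3$ on $X^\circ \setminus \{r\leq 3\bhm\}$; this frame extends smoothly down to $\pa_+X$. In this frame the Minkowski metric $\ubar g$ has \emph{constant} coefficients, so every Minkowski Christoffel symbol vanishes, and the tensor wave operators reduce to $\Box_{\ubar g,j}=\Box_{\ubar g}\otimes 1_{N_j\times N_j}$.

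The whole estimate rests on two elementary observations. First, each spatial coordinate vector field $\pa_{x^i}$ is a scattering vector field, so $\pa_{x^i}\in\rho\Vb$ and hence $\pa_{x^i}\pa_{x^j}\in\rho^2\Diffb^2$. Second, scattering vector fields map $\rho^k\CI(X)$ to $\rho^{k+1}\CI(X)$. Combined, these imply that for a stationary metric with $g - \ubar g \in \rho^k\CI(X;S^2\,\wt\Tsc{}^*X)$, the Christoffel symbols of $g$ in the chosen frame (which coincide with those of the difference $g-\ubar g$) satisfy $\Gamma^\lambda_{\mu\nu}(g)\in\rho^{k+1}\CI$.

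With these preliminaries I would write each operator in coordinates,
\[
  \Box_{g,0}=-g^{\mu\nu}\pa_\mu\pa_\nu+g^{\mu\nu}\Gamma^\lambda_{\mu\nu}(g)\pa_\lambda,\qquad (\delta_g^*\omega)_{\mu\nu}=\tfrac12(\pa_\mu\omega_\nu+\pa_\nu\omega_\mu)-\Gamma^\lambda_{\mu\nu}(g)\omega_\lambda,
\]
and similarly for $\Box_{g,j}$ with extra schematic terms $\Gamma\cdot\pa$, $\pa\Gamma$, $\Gamma\cdot\Gamma$ from the tensor connection. Taking the Fourier conjugate at $\sigma = 0$ amounts to deleting every term that involves $\pa_{t_*}$. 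For the first claim, the difference $\wh{\Box_{g,j}}(0)-\wh{\Box_{\ubar g,j}}(0)$ splits into a leading piece $-(g^{ij}-\ubar g^{ij})\pa_i\pa_j\otimes 1\in\rho\CI\cdot\rho^2\Diffb^2=\rho^3\Diffb^2$, plus Christoffel contributions of types $\rho^2\CI\cdot\rho\Vb$, $\rho^3\CI$, and $\rho^4\CI$, each contained in $\rho^3\Diffb^1\subset\rho^3\Diffb^2$. For the second claim, the first-order parts of $\delta_g^*$ and $\delta_{\ubar g}^*$ coincide in this frame, so the difference reduces to the pure Christoffel term in $\rho^2\CI=\rho^2\Diffb^0\subset\rho^2\Diffb^1$. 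For the third claim, the same argument applies with the sharper input $g_{(\bhm,\bha)}-g_{(\bhm,0)}\in\rho^2\CI$, yielding $\Gamma(g_{(\bhm,\bha)})-\Gamma(g_{(\bhm,0)})\in\rho^3\CI$ and hence the membership in $\rho^3\Diffb^1$.

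The computation itself is essentially routine; the main care required is to commit to the frame $\{d t_*, d x^i\}$ at the outset, since only in this scattering trivialization do the Minkowski Christoffel symbols vanish identically, making the deviation of $g$ from $\ubar g$ the sole source of connection terms. The $\rho^3$ gain then becomes a transparent consequence of combining the metric decay $g - \ubar g \in \rho\CI$ with the $\rho^2$ factor carried by any pair of spatial scattering derivatives.
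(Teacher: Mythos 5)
Your overall strategy --- compute both operators in a global frame of $\wt\Tsc{}^*X$ that is smooth down to $\pa_+X$, and convert the metric decay $g-\ubar g\in\rho\,\CI$ and $g_{(\bhm,\bha)}-g_{(\bhm,0)}\in\rho^2\CI$ into decay of Christoffel symbols and then of the operator differences using $\Vsc=\rho\Vb$ --- is exactly the computation behind this lemma (the paper itself gives no proof and simply refers to \cite[Lemma 3.4]{HHV}), and your order-counting ($\pa_{x^i}\pa_{x^j}\in\rho^2\Diffb^2$, Christoffel symbols in $\rho^{k+1}\CI$ when the metric perturbation is in $\rho^k\CI$, the splitting of the tensorial terms into $\Gamma\cdot\pa$, $\pa\Gamma$, $\Gamma\cdot\Gamma$ contributions) is correct.

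The one point you should fix is the choice of frame. You commit to $\{d t_*,d x^i\}$ and assert that $\ubar g$ has constant coefficients there, so that all Minkowski Christoffel symbols vanish; this is not true. Since $d t_* = d t + F'(r)\,d r$ with $F'(r)=-(r^2+\bha^2)/\Delta_b$ for $r\geq 4\bhm$, and $d r=\sum_i (x^i/r)\,d x^i$, the components of $\ubar g=d t^2-d r^2-r^2\slg$ in the $\{d t_*,d x^i\}$ frame involve the non-constant (though smooth up to $\pa_+X$) functions $x^i/r$, so the Minkowski connection coefficients in that frame are only $O(\rho)$ rather than zero; the identification $\Box_{\ubar g,j}=\Box_{\ubar g}\otimes 1_{N_j\times N_j}$ asserted in the lemma holds in the frame $\{d t,d x^i\}$, which is what the statement means by ``standard coordinate basis'' and which is an equally admissible smooth frame of $\wt\Tsc{}^*X$. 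The repair is immediate: either run your argument verbatim in $\{d t,d x^i\}$, where $\Gamma(\ubar g)=0$ genuinely holds, or keep $\{d t_*,d x^i\}$ but systematically write every connection contribution as a difference, e.g.\ $\Gamma(g)-\Gamma(\ubar g)\in\rho^2\CI$ and $g^{\mu\nu}\Gamma(g)-\ubar g^{\mu\nu}\Gamma(\ubar g)=(g^{\mu\nu}-\ubar g^{\mu\nu})\Gamma(g)+\ubar g^{\mu\nu}(\Gamma(g)-\Gamma(\ubar g))\in\rho^2\CI$, which still yields the stated $\rho^3$ and $\rho^2$ gains. With that adjustment the proof is complete.
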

In the language of~\cite{MelroseAPS}, the \emph{normal operators} of $\wh{\Box_{g,j}}(0)$ and $\wh{\Box_{\ubar g,j}}(0)$ at $\pa_+ X$ are the same. 
\begin{TOOLONG}
\section{Main result}
\label{SecMT}
The main result of this paper is the 
 \begin{thm}
 Let $0<\bha<\bhm$, $\sigma\in\C$, $\Im\sigma\geq 0$, and suppose $\dot g=e^{-i\sigma t_*}\tilde{h},\, \tilde{h}\in \Hbext^{\infty,q}(X;S^{2}\wt{\Tsc^*X}),$ $q\in (-3/2;-1/2)$ is an outgoing mode solution of the linearized Einstein equation
  \begin{equation*}
     D_g\Ric(\dot g)=0.
  \end{equation*}
  Then there exist parameters $\dot\bhm\in\R$, $\dot\bha\in\R^3$, and an outgoing 1-form $\omega\in \Hbext^{\infty,q-1}(X;\wt{\Tsc}^*X)$, such that
  \begin{equation*}
    \dot g-\dot g_{(\bhm,\bha)}(\dot\bhm,\dot\bha)=\delta_g^*\omega.
  \end{equation*}
\end{thm}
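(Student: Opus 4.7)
\medskip

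\textbf{Overall strategy.} The plan is to use the gauge-invariant approach outlined in the introduction. First I would form the two extreme Teukolsky scalars $\dot\Psi_0,\dot\Psi_4$ from the linearized Weyl tensor of $\dot g$. Because $\dot g$ has the mode form $e^{-i\sigma t_*}\tilde h$, both $\dot\Psi_0$ and $\dot\Psi_4$ are mode solutions of the corresponding spin-weighted Teukolsky equations with frequency $\sigma$. The decay/regularity of $\tilde h$ together with the explicit expressions for $\dot\Psi_{0},\dot\Psi_{4}$ in terms of $\dot g$ and two derivatives place these scalars in the mode-function spaces in which the mode stability results of Whiting \cite{Wh} and Andersson--Ma--Paganini--Whiting \cite{AMPW} apply for $\sigma\neq 0,\,\Im\sigma\ge0$, and to which the $\sigma=0$ absence-of-modes result established earlier in Section~\ref{Sec4} of this paper applies. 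In every case, one concludes $\dot\Psi_0=\dot\Psi_4=0$.

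\textbf{Reduction to a linearized Plebanski--Demianski perturbation.} With the two extreme Teukolsky scalars vanishing, I would invoke the complete classification of gauge invariants of linearized gravity on Kerr from \cite{2018PhRvL.121e1104A,2019arXiv191008756A}: on the open set where Kerr is of type $D$, the remaining independent gauge invariants of $\dot g$ are precisely those carried by a linearized perturbation within the Plebanski--Demianski family, parametrized by $(\dot\mfrak,\dot\afrak,\dot\cfrak,\dot\nfrak)$. Since the gauge invariants are complete, the difference between $\dot g$ and a suitable linearized PD perturbation has all gauge invariants vanishing, hence is locally of the form $\delta_g^*\omega$. To globalize this statement over the exterior region I would use the setting of \cite{2019arXiv191008756A} and a Poincar\'e lemma argument (invoking the topological simplicity of the exterior in the chosen slicing) to produce a globally defined 1-form $\omega$ such that $\dot g - \dot g_{\rm PD}(\dot\mfrak,\dot\afrak,\dot\cfrak,\dot\nfrak) = \delta_g^*\omega$.

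\textbf{Killing the non-Kerr PD parameters.} It remains to show $\dot\cfrak=\dot\nfrak=0$; the remaining parameters $(\dot\mfrak,\dot\afrak)$ are identified with $(\dot\bhm,\dot\bha)$ (the latter augmented to include perturbations of the axis of rotation, cf.\ Remark~\ref{rem:orbit}). The nut and acceleration parameters produce characteristic long-range contributions to the curvature gauge invariants which, if one had e.g.\ $\dot g=\cO(r^{-1})$, would be excluded directly by inspecting the large-$r$ expansion of curvature invariants. In our functional setting, however, $\tilde h\in\Hbext^{\infty,q}$ with $q\in(-3/2,-1/2)$ gives only $\cO(r^{-q-3/2})$ pointwise decay, which is a priori insufficient. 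The remedy, as indicated in the introduction, is a normal operator argument producing a polyhomogeneous expansion of the mode solution. This argument does not apply directly to $D_g\Ric$ because of its gauge degeneracy, but it does apply to the gauge-fixed operator $\wh{L_g}(\sigma)$ whose normal operator is controlled via Lemma~\ref{LemmaKStNormal}. I would therefore first correct $\dot g$ to a solution of the gauge-fixed linearized Einstein equation by adding a linearized Kerr perturbation plus a term of the form $\delta_g^*\omega'$ (so that the new $\omega$ shifts but the PD parameters are unchanged), then run the normal operator/polyhomogeneous argument on the corrected tensor, extract the leading behavior, and read off $\dot\cfrak=\dot\nfrak=0$ from the absence of the corresponding long-range terms in the allowed expansion.

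\textbf{Main obstacle.} The conceptually cleanest step is the reduction via Teukolsky mode stability, which is essentially plug-and-play once the relevant function-space membership is verified. The real work, and the step I expect to be the main obstacle, is the last one: establishing that the mode solution has the polyhomogeneous expansion needed to force $\dot\cfrak,\dot\nfrak=0$ under the weak a priori decay permitted by $q\in(-3/2,-1/2)$. This requires a careful combination of the gauge-fixing correction (to land in a Fredholm setting where the normal operator methods of \cite{HHV,Va1,Va2} bite) with a precise tracking of which PD parameters survive under this correction, so that the final statement is phrased solely in terms of the genuine Kerr parameters $(\dot\bhm,\dot\bha)$ modulo $\delta_g^*$.
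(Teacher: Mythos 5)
Your proposal follows essentially the same route as the paper's proof for the central case $\sigma=0$: vanishing of the extreme Teukolsky scalars via Theorem~\ref{Thm3.1}; reduction to a linearized Plebanski--Demianski perturbation plus a globally defined pure gauge term via the completeness of the gauge invariants, Proposition~\ref{prop6.5} and the Poincar\'e-lemma argument of Lemma~\ref{lem:commut}; correction of $\dot g$ by $\dot g_b(\lambda(\dot\bha),\dot\bha)+\delta_{g_b}^*\omega$ to a solution of the gauge-fixed operator so that the normal operator argument yields the expansion $\rho g_0+\cA^{2-}$ (Lemma~\ref{lem5.1}); and finally killing $\dot\nfrak,\dot\cfrak$ by comparing the growth of $\InvSymb_\zeta$ permitted by that expansion (Proposition~\ref{propdecayInv}) with the $\cO(r)$ real part and $\cO(r^2)$ growth these parameters would force. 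You have correctly identified the last step as the delicate one, including why the gauge-fixing detour is needed. The one place where you genuinely diverge is $\sigma\neq 0$: the paper does not route this case through the PD classification at all, but instead uses the identity $k=\delta_{g_b}^*\omega+\Lie_\xi\dot g$ from \cite{2016arXiv160106084A}, where $k$ vanishes once the extreme Teukolsky scalars do, so that $\Lie_\xi\dot g=-i\sigma\dot g$ gives $\dot g=\tfrac{i}{\sigma}\delta_{g_b}^*\omega$ immediately. Your unified route can be made to work, but it needs one observation you do not state: for a vacuum type D perturbation the invariants $\InvSymb_\xi,\InvSymb_\zeta$ are automatically $t$- and $\varphi$-independent (this is what the Killing property of $\Im\mathbf{A}$ buys in the proof of Proposition~\ref{prop6.5}), so for a mode with $\sigma\neq 0$ they must vanish identically, forcing all four PD parameters to zero before one even discusses decay. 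The paper's shortcut sidesteps this and is both quicker and cleaner for $\sigma\neq 0$; your approach has the mild virtue of treating all frequencies uniformly, at the cost of this extra stationarity argument.
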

We also obtain an analogous result for the linearized Einstein equation with fixed De Turck/wave map gauge. In that case more can be said about the pure gauge solutions which lie in a finite dimensional space characterized by asymptotic translations, asymptotic rotations and Coulomb type solutions of the $1-$form wave operator, see Theorem \ref{PropL0} for details. 
\end{TOOLONG} 

\section{The Teukolsky master equation}
\label{Sec4}
Perturbations of Kerr are significantly more complicated to analyze than perturbations of Schwarzschild. Real progress became possible when Teukolsky \cite{Te} wrote down his master equation describing gauge invariant perturbations of various spin. We begin by setting up the framework required for this formulation and proceed to discuss the absence of mode solutions.   

\subsection{The Teukolsky operator} \label{sec:teuk} 
We briefly review the construction of the Teukolsky operator, we refer e.g. to \cite{Ch}, \cite{GHP},  \cite{Te}  for details of the construction and to \cite{M} for a summary of the geometric background of the Teukolsky equation. 
Since $\cM$ is a non-compact,  oriented and time oriented Lorentzian 4-manifold, it admits a spin structure, i.e. an $\SL(2,\Co)$ principal bundle double covering the orthonormal frame bundle $\SO(\cM)$. 
Let $\cS, \bar \cS$ be the spinor bundle and its conjugate with fibers $\Co^2$ and $\bar\Co^2$, respectively. The isomorphism $\cS \otimes \bar \cS \to  T_{\Co} \cM$ yields a correspondence between tensors and spinors. Sections of $\cS$ are denoted with capital latin indices, eg. $\kappa^A$, while sections of $\bar\cS$ are denoted with primed indices, eg. $\varpi^{A'}$. 
The action of $\SL(2,\Co)$ on $\Co^2$ leaves a symplectic structure on $\cS$ invariant. The spin-metric $\epsilon_{AB} = \epsilon_{[AB]}$ is given by normalizing this symplectic form so that $g_{ab} = \epsilon_{AB} \epsilon_{A'B'}$. The spin-metric $\epsilon_{AB}$ and its inverse $\epsilon^{AB}$, where $\epsilon_{AB} \epsilon^{CB} = \delta_A{}^C$, are used to raise and lower spinor indices, eg. $\kappa^A \epsilon_{AB} = \kappa_B$. A spin-frame is a dyad $o^A, \iota^A$ normalized such that 
\begin{align} \label{eq:oiota}
o_A \iota^A = 1. 
\end{align} 
This corresponds to a complex null tetrad 
\begin{align} 
\ell^a = o^A o^{A'}, \quad n^a=\iota^A \iota^{A'}, \quad m^a =  o^A \iota^{A'}, \quad \bar m^a = \iota^A o^{A'},
\end{align}   
with 
\begin{align} 
g_{ab} = 2 (\ell_{(a} n_{b)} - m_{(a} \bar m_{b)}).
\end{align} 
The Newman-Penrose (NP) and Geroch-Held-Penrose (GHP) formalisms  represent spinor and tensor fields in terms of scalar dyad and tetrad components. Note that  \eqref{eq:oiota} is left invariant under rescalings 
\begin{align} \label{eq:rescale}
o^A \to \lambda o^A, \quad \iota^A \to \lambda^{-1} \iota^A 
\end{align} 
for a non-vanishing, complex scalar field $\lambda$ on $\cM$. A scalar $\phi$ constructed by projecting on a dyad or tetrad then transforms as 
\begin{align} \label{eq:proper}
\phi \to |\lambda|^{2w} \left ( \frac{\lambda}{\bar \lambda} \right ) ^s \phi
\end{align} 
where $w, s$ are the boost- and spin-weights of $\phi$, respectively. Scalars transforming as in \eqref{eq:proper} are termed properly weighted. Properly weighted scalars can be viewed as sections of complex line bundles $\mathcal{B}(s,w)$ with structure group $\Co_* = \Co \setminus \{0\}$. A spin dyad (or a complex null tetrad), provides a trivialization of $\mathcal{B}(s,w)$ and in this trivialization, sections of $\mathcal{B}(s,w)$ correspond to complex functions on $\cM$. 
\begin{TOOLONG} 

extends to tensorsGiven a spin dyad $o^A, \iota^A$ with conjugate dyad $o^{A'}, \iota^{A'}$ and a complex null tetrad $l^a, n^a, m^a, \bar m^a$ with $g_{ab} = 2(\ell_{(a} n_{b)} - m_{(a} \bar m_{b)})$, the Infeld-van der Waerden symbol or soldering form $\sigma_{AA'}^a$ provides a map $\cS \otimes \bar \cS \to T_{\Co} \cM$ defined by
Dualizing gives $\sigma^{AA'}_a$. 
The Infeld-van der Waerden symbol provides a correspondence between spinors and tensors. 
The action of the spin group $\SL(2,\Co)$ on $\cS$ leaves invariant a symplectic form $\epsilon_{AB}$ with conjugate $\epsilon_{A'B'}$. Normalizing $\epsilon_{AB}$ so that $g_{ab} = \sigma_a^{AA'} \sigma_b^{BB'} \epsilon_{AB} \epsilon_{A'B'}$  gives the spin metric. Choosing a normalized spin dyad with $\epsilon_{AB} o^A \iota^B = 1$ \mnote{LA: check sign}  

NNNN

Let $\cS$ be the spin bundle over $\cM$, $\epsilon$ the associated symplectic form, $j:\cS\otimes \bar{\cS}\rightarrow T_{\C}{\cM}$ the canonical isomorphism, $\cA$ the bundle of spin frames and $\cN$ the bundle of null frames.  
There is a natural double covering map $d:\cA\rightarrow \cN$, $(o, \iota) \mapsto (o\otimes \overline{o}, \iota\otimes \overline{\iota}, o\otimes \overline{\iota}).$ $\cA$ has two connected components $\cA_0,\, \cA_1$ and so does $\cN\,\, (\cN_0=d(\cA_0),\, \cN_1=d(\cA_1))$. $\cA_0$ is a $\C^*$ principal bundle. Let us consider the representation 
\[\rho_{s,w}:
\begin{cases}
\C^* \rightarrow GL(\C) \\
z \mapsto (a\mapsto z^{-w-s}\overline{z}^{-w+s}a).
\end{cases}
\]
We denote by $\cB(s,w)$ 
the vector bundle associated to $\cA_0$ and this representation. There is a natural identification between sections of $\mathcal{B}(s,w)$ and the set of complex valued functions $f$ defined on $\mathcal{A}_0$ such that for all $z\in \C^*$ 
\begin{equation}f(a \cdot z) = z^{w+s}\overline{z}^{w-s}f(a). \label{spinWeight}
\end{equation} 
To simplify the computations it is sometimes interesting to find a smaller principal bundle with a representation such that the associated vector bundle is isomorphic to $\mathcal{B}(s,w)$.
We can consider $\mathcal{A}_{0,r}:= \mathcal{A}_0/\R_+^*$ (we quotient by the usual action of $\R_+^*\subset \C^*$). Similarly we define $\mathcal{N}_{0,r} := \mathcal{N}_0/\R_+^*$. Then the map $d$ induces a double cover between $\mathcal{A}_{0,r}$ and $\mathcal{N}_{0,r}$ (we still call this induced map $d$). Moreover $\mathcal{A}_{0,r}$ and $\mathcal{N}_{0,r}$ have both a structure of $U(1)$ principal bundle over $\mathcal{M}$. 

The Levi Civita connection on the manifold lifts to a canonical connection $\Theta$ on $\cB(s,w)$. We now call $\boldsymbol{o}$ (resp. $\boldsymbol{\iota}$) the first (resp. second) projection from $\mathcal{A}_0$ to $\mathcal{S}$ and we define $\boldsymbol{l} := \boldsymbol{o}\otimes\overline{\boldsymbol{o}}$, $\boldsymbol{n} := \boldsymbol{\iota}\otimes\overline{\boldsymbol{\iota}}$ and $\boldsymbol{m} = \boldsymbol{o}\otimes\overline{\boldsymbol{\iota}}$. Note that thanks to the map $j$, $\boldsymbol{l}$, $\boldsymbol{m}$, $\boldsymbol{n}$ can be seen as $T_{\C}\mathcal{M}$ valued maps. \mnote{This sentence needs to be reformulated}This decomposition is often used after a first decomposition of the cospinor or cotensor into symmetric spinors (see \cite[Section 3.3]{PeRi1} for more details about this type of decomposition).

\end{TOOLONG} 
It is an important fact that the irreducible representations of $\SL(2,\Co)$ are given by symmetric spinors. Using the spinor-tensor correspondence already mentioned above, one may expand each tensor field in terms of symmetric spinors and $\epsilon$ factors. For example, the electromagnetic field strength tensor is a real 2-form $\mathrm{F}_{ab} = \mathrm{F}_{[ab]}$ and we have (see (3.4.20) in \cite{PeRi1} for details)
\begin{align} 
\mathrm{F}_{ab} = \phi_{AB} \epsilon_{A'B'} + \epsilon_{AB} \bar \phi_{A'B'}
\end{align} 
for a symmetric spinor field $\phi_{AB}$, the electromagnetic spinor. Given a dyad $o^A, \iota^A$ with corresponding tetrad $\ell^a, n^a, m^a, \bar m^a$, the Newman-Penrose scalars corresponding to $\phi_{AB}$ and $F_{ab}$ are 
\begin{subequations}
\label{Maxwell_scalars}
\begin{align}
\phi_1:=\phi_{AB} o^A o^B &= \mathrm{F}_{ab} l^a m^b, \\
\phi_0:=\phi_{AB} o^A \iota^B &= \frac{1}{2}\left( \mathrm{F}_{ab} \ell^a n^b + \mathrm{F}_{ab} \bar m^a m^b \right),\\
\phi_{-1}:=\phi_{AB} \iota^A \iota^B &= \mathrm{F}_{ab} \bar m^a n^b.
\end{align}
\end{subequations}
The notation used here does not conform to the convention used by Newman and Penrose and GHP, in that indices denote the spin(and boost)-weights of the Maxwell scalars $\phi_i$. This notational difference also applies to the $\Psi_i$ below. 

Similarly, we have the following decomposition for the Weyl tensor $W_{abcd}$ (see \cite[Eq. (4.6.41)]{PeRi1}):
\begin{align}
W_{abcd}  = \Psi_{ABCD} \epsilon_{A'B'} \epsilon_{C'D'} + \epsilon_{AB} \epsilon_{CD} \bar \Psi_{A'B'C'D'} \label{Weyl_spinor}
\end{align}
where the Weyl spinor, $\Psi_{ABCD}$, is a section of $(\mathcal{S}')^{\odot 4}$. We can compute the spin-weighted Weyl scalar components of $\Psi$ from components of $W$ ({\it c.f.} (4.11.6) and (4.11.9) in \cite{PeRi1}):
\begin{subequations} \label{Weyl_scalars}
\begin{align}
\Psi_2 &= \Psi_{ABCD} o^A o^B o^C o^D = W_{abcd} \ell^a m^b \ell^c m^d, \\
\Psi_1 &= \Psi_{ABCD} o^A o^B o^C \iota^D = W_{abcd} \ell^a n^b \ell^c m^d,  \\ 
\Psi_0 &= \Psi_{ABCD} o^a o^B \iota^C \iota^D = W_{abcd} \ell^a m^b \bar m^c n^d,  \\ 
\Psi_{-1} &= \Psi_{ABCD} o^A \iota^B \iota^C \iota^D = W_{abcd} \ell^a n^b \bar m^c n^d,  \\ 
\Psi_{-2} &= \Psi_{ABCD} \iota^A \iota^B\iota^C\iota^D = W_{abcd} n^a \bar m^b n^c \bar m^d.  
\end{align} 
\end{subequations} 
We remark that the scalars $\phi_i$ and $\Psi_i$ are all properly weighted, and each has boost-weight equal to its spin-weight. Since Kerr is Petrov type D, there are dyads or, equivalently, null tetrads, such that all Newman-Penrose Weyl scalars are zero, except $\Psi_0$. Any such dyad or tetrad is called principal. From now on, unless otherwise stated, we shall be working in a principal dyad $o^A, \iota^A$ on the Kerr background, and the corresponding principal null tetrad $\ell^a, n^a, m^a, \bar m^a$. The condition that the dyad is principal fixes the dyad  up to a rescaling of the form \eqref{eq:rescale}. If $\phi_{AB}$ is the Maxwell field on the charged Kerr-Newman spacetime, which is also Petrov type D, then in a principal tetrad, $\phi_0$ is the only non-vanishing Maxwell scalar. 

The Levi-Civita connection $\nabla_a$ defined with respect to $g_{ab}$ lifts to a connection $\Theta_a$ on $\mathcal{B}(s,w)$, the GHP connection.  See \cite[Eq. (2.14a)]{GHP}, \cite{Har}. 
We define the GHP operators $\text{\th}, \text{\th}', \text{\dh}, \text{\dh}'$, cf. \cite[Eq. (2.14)]{GHP}:  
\[
\begin{array}{cc}
\text{\th}:\begin{cases}
\Gamma(\mathcal{B}(s,w))\rightarrow \Gamma(\mathcal{B}(s,w+1))\\
u \mapsto \ell^a \Theta_a u
\end{cases}
&
\text{\th}':\begin{cases}
\Gamma(\mathcal{B}(s,w))\rightarrow \Gamma(\mathcal{B}(s,w-1))\\
u \mapsto n^a \Theta_a u
\end{cases}\\

\text{\dh}:\begin{cases}
\Gamma(\mathcal{B}(s,w))\rightarrow \Gamma(\mathcal{B}(s+1,w))\\
u \mapsto m^a \Theta_a u
\end{cases}
&
\text{\dh}':\begin{cases}
\Gamma(\mathcal{B}(s,w))\rightarrow \Gamma(\mathcal{B}(s-1,w))\\
u \mapsto \bar m^a \Theta_a u
\end{cases}
\end{array}
\]

and, in the notation of Remark 4.7 in \cite{M}, we put:
\[
G_s = (\text{\th}-2s\varrho-\bar{\varrho})(\text{\th}'-\varrho') - (\text{\dh} -\bar\tau' - 2s \tau)(\text{\dh}' -\tau'),
\]
where $\varrho, \varrho', \tau, \tau'$ are among the properly (spin- and boost-)weighted GHP spin coefficients, and are given by 
\begin{align}\label{NPspin_cos}
\varrho={}&m^a\bar{m}^b\nabla_b l_a,\quad \varrho'=\bar{m}^am^b\nabla_b n_a,\quad \tau=m^an^b\nabla_bl_a,\quad{\rm and}\quad \tau'=\bar{m}^al^b\nabla_bn_a,
\end{align}
cf. \cite[Eq. (2.3)]{GHP}.
\begin{TOOLONG} 
For later use in Section \ref{gauge_inv}, we also note that the spin coefficients defined in \eqref{NPspin_cos} occur as:
\begin{align}\label{NPspin_cos}
\varrho={}&-\boldsymbol{b(\overline{m})},\quad \varrho'=-\boldsymbol{c(m)},\quad \tau=-\boldsymbol{b(n)},\quad{\rm and}\quad \tau'=-\boldsymbol{c(l)}.
\end{align}
\end{TOOLONG}  
The Teukolsky operator is then given by
\[\check{T}_{s} := 2G_s-4(s-1)\left(s-\frac{1}{2}\right){\Psi}_0.\]

\subsection{Spherical symmetry}
\subsubsection{Spherical harmonic decompositions}
\label{SsYS}
In this section we collect some facts about some geometric operators acting on functions and sections of certain bundles on the standard sphere.

Let $\slg$ be the standard metric on $\Sph^2$, and denote geometric operators on $\Sph^2$ using a slash, thus $\sltr=\tr_\slg$, $\sldelta=\delta_\slg$, etc. We denote by $Y_{l m}$, $l\in\N_0$, $m\in\Z$, $|m|\leq l$, the usual spherical harmonics on $\Sph^2$ satisfying $\slDelta Y_{l m}=l(l+1)Y_{l m}$. Define the space 
\begin{equation}
\label{EqYS0}
  \scalspace_l := \mathspan\{Y_{l m} \colon |m|\leq l\}
\end{equation}
of degree $l$ spherical harmonics. Thus, $L^2(\Sph^2)=\bigoplus_{j\in\N_0}\scalspace_j$ is an orthogonal decomposition.

Consider next 1-forms on $\Sph^2$. Denote the Hodge Laplacian by $\slDelta_H=(\sld+\sldelta)^2$; the tensor Laplacian $\slDelta_{\slg,1}=-\sltr\slnabla^2$ (also denoted $\slDelta$ for brevity) satisfies $\slDelta=\slDelta_H-\Ric(\slg)=\slDelta_H-1$. Therefore, a spectral decomposition of $\slDelta$ on $L^2(\Sph^2;T^*\Sph^2)$ is provided by the scalar/vector decomposition
\begin{equation}
\label{EqYS1}
  \sld\scalspace_l,\ 
  \vectspace_l := \slstar\sld\scalspace_l \subset \ker\bigl(\slDelta-(l(l+1)-1)\bigr)\qquad (l\geq 1);
\end{equation}
note that $\sldelta\vectspace_l=0$, and that the two spaces in~\eqref{EqYS1} are trivial for $l=0$.

For symmetric 2-tensors finally, we have an analogous orthogonal decomposition into scalar and vector type symmetric 2-tensors: the scalar part consists of a pure trace and a trace-free part, the latter defined using the trace-free symmetric gradient $\sldelta_0^*:=\sldelta^*+\half\slg\sldelta$:
\begin{subequations}
\begin{equation}
\label{EqYS2scal}
  \scalspace_l\slg\ \ (l\geq 0),\qquad
  \sldelta_0^*\sld\scalspace_l\ \ (l\geq 2).
\end{equation}
(Note here that for $\scal\in\scalspace_0\oplus\scalspace_1$, we have $\sldelta_0^*\sld\scal=0$, hence the restriction to $l\geq 2$.) The vector part consists only of trace-free tensors with $l\geq 2$ (since the 1-forms in $\vectspace_1$ are Killing),
\begin{equation}
\label{EqYS2vect}
  \sldelta^*\vectspace_l\ \ (l\geq 2).
\end{equation}
\end{subequations}

The geometric operators on $\Sph^2$ which we will encounter here preserve scalar and vector type spherical harmonics; indeed, this holds in the strong sense that a scalar type function/1-form/symmetric 2-tensor built out of a particular $\scal\in\scalspace_l$ is mapped into another scalar type tensor \emph{with the same $\scal$}, likewise for vector type tensors; this is clear for $\sld$ on functions, $\sldelta$ on 1-forms ($\sldelta(\sld\scal)=l(l+1)\scal$). Furthermore, for $\scal\in\scalspace_l$ and $\vect\in\vectspace_l$,
\begin{gather*}
  \sldelta^*(\sld\scal) = -\tfrac{l(l+1)}{2}\scal\slg + \sldelta_0^*\sld\scal, \quad
  \sldelta(\scal\slg)=-\sld\scal,\quad
  \sldelta(\sldelta_0^*\sld\scal)=\tfrac{l(l+1)-2}{2}\sld\scal, \\
  \sldelta\sldelta^*\vect = \tfrac{l(l+1)-2}{2}\vect, \quad
  \slDelta(\sldelta_0^*\sld\scal) = (l(l+1)-4)\sldelta_0^*\sld\scal, \quad
  \slDelta(\sldelta^*\vect) = (l(l+1)-4)\sldelta^*\vect.
\end{gather*}
\subsubsection{A spin weighted spherical Laplacian} 
Let $H:\Sph^3\rightarrow \Sph^2$ be the Hopf bundle. This is a $U(1)$ principal bundle. We consider the following representation of $U(1)$
\begin{align*}
\rho_s:\left\{\begin{array}{rcl} U(1)&\rightarrow&Gl(\C),\\
z&\mapsto &(a\mapsto z^{-2s} a).\end{array}\right. 
\end{align*}
Let $\slashed{\mathcal B}(s)$ be the complex line bundle associated to $H$ and this representation. We have an identification between sections of $\slashed{\mathcal B}(s)$ and complex functions on $\Sph^3$ such that $f(a\cdot z)=z^{2s}f(a)$, where $\cdot$ is the right action of $U(1)$ on $\Sph^3$. The Levi-Civita connection on $\Sph^2$ lifts to a connection $\slashed{\Theta}$ on $\slashed{\mathcal B}(s)$. Let $\slashed{m}^a,\slashed{\bar{m}}^a$ be a complex dyad on $\Sph^2: \slashed{m}^a\slashed{\bar{m}}_a=1$.  This then defines spherical  edth operators $\mathring{\text{\dh}},\, \mathring{\text{\dh}}'$,
  \begin{align} 
\mathring{\text{\dh}}: \left\{\begin{array}{rcl}
\Gamma(\slashed{\mathcal{B}}(s))&\rightarrow& \Gamma(\slashed{\mathcal{B}}(s+1))\\ u&\mapsto&\slashed{m}^a\slashed{\Theta}_au,\end{array}\right. \\ 
\mathring{\text{\dh}}': \left\{\begin{array}{rcl}
\Gamma(\slashed{\mathcal{B}}(s))&\rightarrow& \Gamma(\slashed{\mathcal{B}}(s-1))\\ u&\mapsto&\slashed{\bar{m}}^a\slashed{\Theta}_au.\end{array}\right. 
\end{align} 
Putting 
\begin{align*}
\slashed{m}^a=\frac{1}{\sqrt{2}}(\partial_{\theta}+\frac{i}{\sin\theta}\partial_{\varphi}),\quad \slashed{\bar{m}}^a=\frac{1}{\sqrt{2}}(\partial_{\theta}-\frac{i}{\sin\theta}\partial_{\varphi}),
\end{align*}
$\mathring{\text{\dh}}$ and $\mathring{\text{\dh}}'$ can be computed in a standard trivialization (see \cite{GMNRS}\footnote{The factor $\frac{1}{\sqrt{2}}$ comes from a useful normalization.}):
\begin{align*}
\mathring{\text{\dh}}=\frac{1}{\sqrt{2}}(\partial_{\theta}+\frac{i}{\sin\theta}\partial_{\varphi}-s\cot\theta),\\
\mathring{\text{\dh}}'=\frac{1}{\sqrt{2}}(\partial_{\theta}-\frac{i}{\sin\theta}\partial_{\varphi}+s\cot\theta). 
\end{align*}
Note: these are identical to the operators introduced in (1.16c-d) of \cite{ABBM} for the conformally rescaled metric on the asymptotic sphere at null infinity.
The operator   
\begin{align} 
- \slDelta^{[s]} := -2\mathring{\text{\dh}}' \mathring{\text{\dh}} : \Gamma(\slashed{\mathcal{B}}(s)) \to \Gamma(\slashed{\mathcal{B}}(s))
\end{align} 
is a second order, spin weighted, linear, spherical operator. In the local trivialization above it takes the form 
\begin{align} 
- \slDelta^{[s]}  = \frac{1}{\sin^2\theta}D_{\phi}^2+\frac{1}{\sin\theta}D_{\theta}\sin \theta D_{\theta}+2s\frac{\cos\theta}{\sin^2\theta}D_{\phi}+s^2\cot^2\theta-s.
\end{align} 
Here we have used $D_{\bullet}=\frac{1}{i}\partial_{\bullet}$. $- \slDelta^{[s]}$ has to be considered as an operator acting on $L^2(\Sph^2;\slashed{\mathcal B}(s))$. $(- \slDelta^{[s]}, H^2(\Sph^2; \slashed{\mathcal B}(s))$ is selfadjoint. It has  a discrete spectrum with eigenvalues ($l\ge \vert s\vert$), 
\begin{align} 
\lambda_{l}^{[s]}=l(l+1)-s(s+1).
\end{align} 
The eigenfunctions are of the form $Y_{l m}^{[s]}=S_{l}^{[s]}(\cos\theta)e^{im \phi_*}\, (m,\, \vert s\vert\le l)$. We then put 
\begin{align*}
\scalspace^{[s]}_{l} := \mathspan\{Y_{l m}^{[s]} \colon |m|,\, \vert s\vert\leq l\}.
\end{align*}
This gives an orthogonal decomposition 
\begin{equation}
\label{decomp1}
L^2(\Sph^2;\mathring{\mathcal{B}}(s))=\bigoplus_{l\ge \vert s\vert} \scalspace^{[s]}_{l}.
\end{equation}
\begin{rmk}

As an alternative, one could consider $-2\mathring{\text{\dh}} \mathring{\text{\dh}}'$ (see (1.13) in \cite{ABBM}), or one might instead consider the symmetric $- (\mathring{\text{\dh}} \mathring{\text{\dh}}'+\mathring{\text{\dh}}' \mathring{\text{\dh}})$, as the two dimensional operator associated with the restriction to the sphere of the connection $\slashed{\Theta}$. However $-\slashed{\Delta}^{[s]}$ is the operator which we identify in the Teukolsky equation. All these operators are only shifts one with respect to another; more precisely $-\mathring{\text{\dh}} \mathring{\text{\dh}}'=-\mathring{\text{\dh}}' \mathring{\text{\dh}}+s$. 
\begin{TOOLONG}
\item The link to Section \ref{sec:teuk} is the following. The bundle ${\mathcal B}(s,0)$ is isomorphic to $\R_t\times\R_{r>r_+}\times \slashed{\mathcal B}(s)$. A section $Z$ of $\slashed{\mathcal B}(s)$ can be considered as a section $\tilde{Z}$ of ${\mathcal B}(s,0)$ which is constant in $(t,r)$. We then have $\slashed{\Theta}Z=\Theta\tilde{Z}$. To compute the operators $\text{\dh},\, \text{\dh}'$ in a concrete trivialization we have to choose a tetrad. A natural choice is Kinersley's tetrad, see \eqref{principalVectors}-\eqref{vectorM}. In this tetrad we have $m^a=\frac{1}{r+ia\cos\theta}\slashed{m}^a$ and thus 
\begin{align*}
(\mathring{\text{\dh}}Z)(\omega)=((1+ia\cos\theta){\text{\dh}}\tilde{Z})(1,1,\omega),\quad \forall \omega\in \Sph^2. 
\end{align*}
\end{enumerate}
\end{TOOLONG}
\end{rmk}

\subsection{The Teukolsky equation associated to the Maxwell field and linearized gravity} 
\label{SecTeu}
Let $\boldsymbol{\Phi}_0$ be a solution of the scalar wave equation: $\Box_{g,0}\boldsymbol{\Phi}_0:=-\nabla^\mu\nabla_\mu\boldsymbol{\Phi}_0=0$.
Let $F$ be an antisymmetric two tensor fulfilling the Maxwell equations on the Kerr metric
\begin{equation}
\label{Maxwell}
dF=0,\quad \delta_{g} F=0
\end{equation}
and let $\boldsymbol{\Phi}_{\pm 1} \equiv \phi_{\pm 1}$, where $\phi_{\pm 1}$ were introduced in \eqref{Maxwell_scalars}.  
Finally, let $h$ be a symmetric two tensor (such as the $\dot{g}$ of equation \eqref{gdoteqn}), fulfilling the linearized Einstein equations around a Kerr solution :
\begin{equation}
\label{LinEinst}
D_{g}Ric(h)=0.
\end{equation}
Let $\htf_{ab}, \slashed{h}$ denote the tracefree and trace parts of $h_{ab}$.
Let $h_{ABA'B'}$ be the spinor corresponding to $h_{ab}$. We have $\htf_{ABA'B'} = h_{(AB)(A'B')}$, $\slashed{h} =  h_A{}^A{}_{B'}{}^{B'}$. 
We shall make use of the spinor variational operator $\vartheta$ introduced in \cite{BVK}. The linearized Weyl spinor is  \cite[Eq. (45)]{2019JMP....60h2501A}
\begin{align} 
\vartheta \Psi_{ABCD} = \half \nabla_{(A}^{A'} \nabla_{B}^{B'} \htf_{CD)A'B'} + \tfrac{1}{4} \slashed{h} \Psi_{ABCD}.
\end{align}  
The operator $\vartheta$ eliminates the dependence on the variation of the tetrad which otherwise is present when one varies the Newman-Penrose scalars. Let $\dot W_{abcd} = (D_g \Weyl(h))_{abcd}$ be the linearized Weyl tensor. 
The Teukolsky equation involves only the linearization of the Weyl  scalars with extreme spin weights 
$\Psi_{\pm 2}$, as given in \eqref{Weyl_scalars}. 
We define:

\begin{subequations} \label{varPsis}
\begin{align}
\boldsymbol{\Phi}_{2}:=~& \vartheta\Psi_2 = \vartheta \Psi_{ABCD} o^A o^B o^C o^D = \dot W_{abcd} \ell^a m^b \ell^c m^d, \\
& \vartheta\Psi_1 = \vartheta\Psi_{ABCD} o^A o^B o^C \iota^D = \dot W_{abcd} \ell^a n^b \ell^c m^d,  \\ 
& \vartheta\Psi_0 = \vartheta\Psi_{ABCD} o^A o^B \iota^C \iota^D = \dot W_{abcd} \ell^a m^b \bar m^c n^d,  \\ 
& \vartheta\Psi_{-1} = \vartheta\Psi_{ABCD} o^A \iota^B \iota^C \iota^D = \dot W_{abcd} \ell^a n^b \bar m^c n^d,  \\ 
\boldsymbol{\Phi}_{-2}:=~&\vartheta \Psi_{-2} = \vartheta \Psi_{ABCD} \iota^A \iota^B \iota^C \iota^D  = \dot W_{abcd} n^a \bar m^b n^c \bar m^d, 
\end{align}
\end{subequations}
for the linearized gravitational field. 
Let 
\begin{align} 
\bhp=r-i\bha\cos \theta 
\end{align} 
 be a 0-(spin and boost)weighted scalar.  We now put
 \begin{align*} 
\boldsymbol{ \Psi}_{[s]}=\left\{\begin{array}{cc} \boldsymbol{\Phi}_{s} & \mbox{if}\, s\ge 0,\\
\bhp^{-2s}\boldsymbol{\Phi}_{s}& \mbox{if}\, s<0.\end{array}\right.
 \end{align*}
 $\boldsymbol{ \Psi}_{[s]}$ then fulfills the Teukolsky equation\footnote{With suitable definitions, the Teukolsky equation is also satisfied by fields of half-integer spin.  These will not be needed here.} 
 \begin{align*}
 \check{T}_s\boldsymbol{ \Psi}_{[s]}=0.
 \end{align*}

We now choose a concrete tetrad, the Kinnersley tetrad \cite{Ki}, given by 
\begin{align}
\label{principalVectors}
\ell &= \frac{r^2+\bha^2}{\Delta_b}\partial_t + \partial_r + \frac{\bha}{\Delta_b}\partial_\phi, \\
\label{principalVectors2}
n &= \frac{r^2 + \bha^2}{2\varrho_b^2} \partial_t - \frac{\Delta_b}{2\varrho_b^2} \partial_r + \frac{\bha}{2\varrho_b^2}\partial_\phi, \\
\label{vectorM}
m &= \frac{i\bha\sin\theta}{\sqrt{2}\bar{\bhp}}\partial_t + \frac{1}{\sqrt{2}\bar{\bhp}}\partial_\theta + \frac{i}{\sqrt{2}\bar{\bhp}\sin\theta}\partial_{\phi} . 
\end{align}
As explained above, the choice of tetrad provides a local trivialization of $\mathcal{B}(s,w)$ and 
$T_s=\varrho_b^2 \check{T}_s$ takes the form 
 \begin{align*}
T_s =& \left(\frac{(r^2+\bha^2)^2}{\Delta_b} - \bha^2 \sin^2\theta\right)\partial_t^2 + \frac{4\bhm \bha r}{\Delta_b}\partial_t\partial_\phi + \left(\frac{\bha^2}{\Delta_b} - \frac{1}{\sin^2\theta}\right)\partial_\phi^2\\
& - \Delta_b^{-s}\partial_r\left(\Delta_b^{s+1}\partial_r\right)
 - \frac{1}{\sin\theta}\partial_\theta \left(\sin\theta \partial_\theta\right) - 2s\left(\frac{\bha(r-\bhm)}{\Delta_b}+\frac{i\cos\theta}{\sin^2\theta}\right)\partial_\phi\\
 & - 2s\left(\frac{\bhm(r^2-\bha^2)}{\Delta_b} - r - i \bha \cos \theta\right)\partial_t + \left(s^2 \cot^2\theta - s\right).
\end{align*}
We can now write the Teukolsky equation as 
\begin{align}
\label{TME}
T_s\boldsymbol{\Psi}_{[s]}=0.
\end{align}

\subsection{The Teukolsky equation in a normalized tetrad}
\label{Sec4.3}
We now normalize the tetrad by performing a boost rotation
\[
\tilde{\ell}^{a}=\Delta_b \ell^{a},\, \tilde{n}^{a}=\frac{1}{\Delta_b}n^{a},\, \tilde{m}^{a}=m^{a}. 
\]
Recall that, for the Maxwell and Weyl scalars under consideration, the spin- and boost-weight coincide. A component of spin weight $s$ in this new tetrad then satisfies $\tilde{\Psi}_{[s]}=\Delta_b^s\Psi_{[s]}$. We will also use the $(t_*,r,\varphi_*,\theta)$ coordinate system.
Let us put $\tilde{T}_s=\Delta_b^{s}T_s\Delta_b^{-s}$. Using also $\rho=\frac{1}{r}$ (not to be confused with an NP spin coefficient) we find for $r\ge 4\bhm$:
\begin{eqnarray*}
\tilde{T}_s=&-\bha^2\sin^2\theta\partial_{t_*}^2+\frac{4\bhm \bha r}{\Delta_b}\partial^2_{t_*\varphi_*}+\left[\frac{\bha^2}{\Delta_b}-\frac{1}{\sin^2\theta}\right]\partial^2_{\varphi_*}-\rho^2\partial_{\rho}\Delta_b^{s+1}\rho^2\partial_{\rho}\Delta_b^{-s}\nonumber\\
&+2r\partial_{t_*}-2(r^2+\bha^2)\rho^2\partial_{\rho}\partial_{t_*}-2s(r-\bhm)\frac{r^2+a^2}{\Delta_b}\partial_{t_*}-\frac{1}{\sin\theta}\partial_{\theta}\sin\theta\partial_{\theta}\nonumber\\
&-2s\left[\frac{\bha(r-\bhm)}{\Delta_b}+\frac{i\cos\theta}{\sin^2\theta}\right]
\partial_{\varphi_*}-2s\left[\frac{\bhm(r^2-\bha^2)}{\Delta_b}-r-i\bha\cos\theta\right]\partial_{t_*}\nonumber\\
&+(s^2\cot^2\theta-s).
\end{eqnarray*}
For $r\le 3\bhm$ we have 
\begin{eqnarray*}
\tilde{T}_s=&-\bha^2\sin^2\theta\partial_{t_*}^2-\frac{1}{\sin^2\theta}\partial^2_{\varphi_*}-\frac{1}{\sin\theta}\partial_{\theta}\sin\theta\partial_{\theta}-\partial_r\Delta_b^{s+1}\partial_{r}\Delta_b^{-s}\nonumber\\
&-2\bha\partial^2_{t_*\varphi_*}-2\bha\partial_r\partial_{\varphi_*}-(r^2+\bha^2)\partial_{t_*}\partial_r-\partial_{r}(r^2+\bha^2)\partial_{t_*}
\nonumber\\
&-2s\frac{i\cos\theta}{\sin^2\theta}
\partial_{\varphi_*}+2s\left[r+i\bha\cos\theta\right]\partial_{t_*}+(s^2\cot^2\theta-s).
\end{eqnarray*} 
We will often consider the rescaled and Fourier transformed operator 
\begin{align*}\hat{T}_s(\sigma)=\rho^2e^{it_*\sigma}\tilde{T}_se^{-it_*\sigma}.
\end{align*} 
\begin{rmk}
Note that in the exterior region $\hat{T}_s(\sigma)=e^{i\sigma F(r)}(\cF_t(\rho^2\tilde{T}_s))(\sigma)e^{-i\sigma F(r)}$, where $\cF_t$ is the Fourier transformed operator with respect to the time variable $t$. In particular both operators coincide for $\sigma=0$. 
\end{rmk}
The Sobolev spaces we will work with are the b$-$Sobolev spaces for sections of $\cB(s,s)$, $\Hbext^{m,q}(X;\cB(s,s))$. We will often write $\Hbext^{m,q}(X),\, \cA^q(X)$ instead of  $\Hbext^{m,q}(X;\cB(s,s)),$ $ \cA^q(X;\cB(s,s))$. 

The bundle $\cB(s,s)$ is the cartesian product of the trivial bundle $Id_{\R_{t_*}\times [r_0,\infty)}$ and $\slashed{\mathcal B}(s)$. 
Recall that $-\slDelta^{[s]}$ acts on sections of $\slashed{\mathcal B}(s)$,
\begin{align*}
-\slDelta^{[s]}: H^2(\Sph^2;\slashed{\mathcal B}(s))\rightarrow L^2(\Sph^2;\slashed{\mathcal B}(s))
\end{align*}
has a discrete spectrum with eigenvalues ($l\ge \vert s\vert$),
\[\lambda_{l}^{[s]}=l(l+1)-s(s+1)\]
and that we have the natural decomposition 
\begin{equation}
\label{decomp1}
L^2(\Sph^2;\slashed{\mathcal B}(s))=\bigoplus_{l\ge \vert s\vert} \scalspace^{[s]}_{l},
\end{equation}
where $\scalspace^{[s]}_{l}$ is the eigenspace corresponding to the eigenvalue $\lambda_{l}^{[s]}$. This entails :
\begin{align*}
\Hbext^{m,q}(X)=\bigoplus_{l\ge \vert s\vert} \Hbext^{m,q}\left(\left[0,\frac{1}{r_0}\right),\frac{d\rho}{\rho^4}\right)\otimes \scalspace^{[s]}_{l}.
\end{align*}
We put 
\begin{align*}
\Hbext^{m,q,l}(X)=\Hbext^{m,q}\left(\left[0,\frac{1}{r_0}\right),\frac{d\rho}{\rho^4}\right)\otimes \scalspace^{[s]}_{l}.
\end{align*}

We will consider the Teukolsky operator $\hat{T}_s(\sigma)$ as an operator 
\begin{align*}
\hat{T}_s(\sigma):\{\Psi \in \Hbext^{m,q(s)};\, \hat{T}_s(\sigma)\Psi \in \Hbext^{m-1,q(s)+2}\}\rightarrow \Hbext^{m-1,q(s)+2},\\
\hat{T}_s(0):\{\Psi \in \Hbext^{m,q(s)};\, \hat{T}_s(0)\Psi \in \Hbext^{m-1,q(s)+2}\}\rightarrow \Hbext^{m-1,q(s)+2}. 
\end{align*}
The explicit form of the Teukolsky operator shows that $\hat{T}_s(0)$ preserves the spaces $\scalspace^{[s]}_{l}$. We can therefore consider the Teukolsky operator also as an operator 
\begin{align*}
\hat{T}_s(0):\{\Psi \in \Hbext^{m,q(s),l};\, \hat{T}_s(0)\Psi \in \Hbext^{m-1,q(s)+2,l}\}\rightarrow \Hbext^{m-1,q(s)+2,l}. 
\end{align*}
Note also that $\hat{\Psi}_{[s]}=\sum_{l}\hat{\Psi}_{[s]}^{l},\, \hat{\Psi}^{l}_{[s]}\in \Hbext^{m,q(s),l}$ is a solution of $\hat{T}_s(0)\hat{\Psi}_{[s]}=0$ if and only if 
$\hat{T}_s(0)\hat{\Psi}^{l}_{[s]}=0$ for all ${l}$.

\subsection{Asymptotic behavior}
To analyze the asymptotic behavior of the solutions of the Teukolsky equation, we will use normal operator arguments. An important role will be played by the Mellin transform and its inverse, see e.g. \cite[Section 3.1]{VasyMicroKerrdS}. 
Let $M_I=[0,\infty)\times \Sph^2$. The Mellin transform is defined by 
\begin{align*}
({\mathcal M}\phi)(\lambda,\omega)&=\int_0^{\infty}\rho^{-i\lambda}\phi(\rho,\omega)\frac{d\rho}{\rho}
\end{align*}
and it gives an isometric isomorphism of $L^2(M_I;\frac{d\rho}{\rho}d\omega)$ with $L^2(\R_{\lambda};L^2(\Sph^2;d\omega))$. Noting that $L^2(M_I;\frac{d\rho}{\rho}d\omega)=\rho^{-3/2}L^2(M_I;\frac{d\rho}{\rho^4}d\omega)$ we obtain an isomorphism of  $\rho^{-3/2}L^2(M_I;\frac{d\rho}{\rho^4}d\omega)$ with $L^2(\R_{\lambda};L^2(\Sph^2;d\omega))$. The inverse Mellin transform is given by 
\begin{align*}
({\mathcal M^{-1}}\psi)(\rho,\omega)&=\int_{\R}\rho^{i\lambda}\psi(\lambda,\omega)d\lambda.
\end{align*}
More generally for $\phi\in \rho^qL^2_b(M_I;\frac{d\rho}{\rho^4}d\omega)=\rho^{q+3/2}L^2_b(M_I;\frac{d\rho}{\rho}d\omega)$, $\cM\phi(\bullet-i(q+3/2))$ is well defined as an element of $L^2(\R_{\lambda};L^2(\Sph^2d\omega))$ and its inverse Mellin transform becomes 
\begin{align*}
({\mathcal M^{-1}}\psi)(\rho,\omega)&=\int_{-\infty-i(q+3/2)}^{\infty-i(q+3/2)}\rho^{i\lambda}\psi(\lambda,\omega)d\lambda.
\end{align*}
If $\phi\in \rho^{q}L^2_b(M_I;\frac{d\rho}{\rho^4}d\omega)$ has compact support in $\rho$, the Mellin transform $\cM\phi$ extends to a holomorphic function in ${\Im } \lambda>-(q+3/2)$ with values in $L^2(\Sph^2;d\omega)$, satisfying 
\begin{align*}
\sup_{-(q+3/2)<\alpha<C}\Vert \cM\phi(\bullet+i\alpha)\Vert_{L^2(\R_{\lambda};L^2(\Sph^2))}<\infty
\end{align*} 
for all $C<\infty$, and $\cM\phi(\bullet+i\alpha)$ extends continuously to $\alpha=-(q+3/2)$ in the topology of $L^2(\R_{\lambda};L^2(\Sph^2)$. Since $\cM$ intertwines differentiation $\rho D_{\rho}$ and multiplication by $\lambda$, we obtain similar statements for weighted $b-$ Sobolev spaces, merely
\begin{align*}
\Hbext^{b,q}(M_I;\frac{d\rho}{\rho^4}d\omega)\ni\phi\mapsto \cM\phi(\bullet-i(q+3/2))\in \bigcap_{j=0}^k \la \lambda\ra^{-j} L^2(\R_{\lambda};H^{k-j}(\Sph^2)). 
\end{align*}

\subsubsection{Polyhomogeneous expansion}

\begin{prop}
\label{prop3.1}
Let $q(s)=q-2s$ if $s\ge 0$ and $q(s)=q$ if $s<0$.  
\begin{enumerate}
\item Let $m>1/2+s,\, q\in(-3/2,-1/2),\, m+q(s)>-\half-2s$. Suppose $\hat{\Psi}_{[s]}\in \Hbext^{m,q(s)}(X),$ $\hat{\Psi}_{[s]}=\sum_{l}\hat{\Psi}^{l}_{[s]},\, \hat{\Psi}^{l}_{[s]}\in \Hbext^{m,q(s),l}(X)$. If 
\begin{equation*}
\hat{T}_s(0)\hat{\Psi}_{[s]}=0,
\end{equation*} 
then there exist $\hat{\Psi}^{0,l}_{[s]}\in \scalspace^{[s]}_{l} $ such that.
\begin{align*}
\hat{\Psi}^{l}_{[s]}-\rho^{l+1-s}\hat{\Psi}_{[s]}^{0,l}\in \cA^{(l+2-s)-}
\end{align*}
In particular we have 
\begin{align*}
\hat{\Psi}_{[s]}-\rho^{w(s)}\hat{\Psi}_{[s]}^{0}\in \cA^{(w(s)+1)-},
\end{align*}
$w(s)=1$ if $s\ge 0,\, w(s)=1-2s$ if $s<0$ and $\hat{\Psi}_{[s]}^0\in \scalspace^{[s]}_{\vert s\vert} $. 
\item Let $\Im\,\sigma\ge 0,\,\sigma\neq 0,\, q(s)<-\half,\, m>1/2+s,\, \, m+q(s)>-\half-2s.$
Suppose that $\hat{\Psi}_{[s]}\in {\rm Ker}\hat{T}_s(\sigma)\cap \Hbext^{m,q(s)}(X)$. Then there exists $\hat{\Psi}^0_{[s]}\in C^{\infty}(\Sph^2;\slashed{\mathcal B}(s))$ such that 
\[\hat{\Psi}_{[s]}-\rho\hat{\Psi}^0_{[s]}\in \cA^{2-}.\] 
\end{enumerate}
\end{prop}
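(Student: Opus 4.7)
The proof is a normal-operator/Mellin-transform analysis at $\pa_+X$, carried out mode-by-mode using the decomposition \eqref{decomp1} so that on each $\scalspace^{[s]}_l$ the problem reduces to a second-order ODE in $\rho$. I begin with part (1). From the explicit expression of $\hat T_s(0)$ in Sec.~\ref{Sec4.3}, together with $\Delta_b=\rho^{-2}(1+O(\rho))$, one computes that $\hat T_s(0)$ applied to $\rho^\beta Y$ with $Y\in\scalspace_l^{[s]}$ returns, modulo a term of order $\rho$ smaller,
\[
\rho^{\beta+2}\bigl[-(2s+\beta)(\beta-1)+l(l+1)-s(s+1)\bigr]Y.
\]
This identifies the normal operator $N_l$ of $\hat T_s(0)$ on the $l$-th mode and gives the indicial equation $(2s+\beta)(\beta-1)=l(l+1)-s(s+1)$, with roots $\beta_+=l+1-s$ and $\beta_-=-l-s$.

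Writing $\hat T_s(0)=N+R$ with $R\in\rho\cdot\Diffb^2$, the identity $N\hat\Psi^l_{[s]}=-R\hat\Psi^l_{[s]}$ has improved weight on the right-hand side: if $\hat\Psi^l_{[s]}\in\rho^\alpha \Hbext^m$, then the RHS lies in $\rho^{\alpha+1}\Hbext^{m-2}$. I then Mellin-transform in $\rho$: $\widehat{N_l}(\lambda)$ is a quadratic polynomial in $\lambda$, invertible away from a discrete set whose points correspond exactly to the indicial roots $\beta_\pm$. Inverting $N_l$ and shifting the Mellin contour upward to $\Im\lambda=-\alpha-3/2-1$ extracts the first-order expansion of $\hat\Psi^l_{[s]}$, with each pole crossed contributing a term $\rho^{\beta}\cdot(\log\rho)^j\otimes Y$ with $Y\in\scalspace_l^{[s]}$. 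The threshold hypothesis $m+q(s)>-\tfrac12-2s$ ensures the Mellin-side mapping properties needed to carry out this inversion.

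The delicate point is that only $\beta_+=l+1-s$ contributes, even though $\beta_-$ can lie above the starting weight $q(s)$ for small $l$. I expect this via the following bootstrap: starting from $\hat\Psi^l_{[s]}\in\rho^{q(s)}\Hbext^m$, the first application of the Mellin argument lifts the weight by one and produces a candidate contribution $c_-\rho^{\beta_-}$ computed as a Mellin-residue of $\widehat{N_l}(\lambda)^{-1}\mathcal{M}(R\hat\Psi^l_{[s]})$ at $\lambda=-i\beta_-$. One then argues $c_-=0$, either by tracking that each subsequent iteration produces a forcing whose Mellin transform is regular at $\beta_-$ (so that the cumulative residue at $\beta_-$ vanishes), or, equivalently, by noting that a nonzero $\rho^{\beta_-}$ coefficient would correspond to a nontrivial homogeneous mode of $N_l$ which cannot arise through the scheme generating $\hat\Psi^l_{[s]}$ from the full operator $\hat T_s(0)$ combined with compatibility at $\pa_-X$. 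Iterating to weight $\beta_+-0^+$, the leading term $\rho^{l+1-s}\hat\Psi^{0,l}_{[s]}$ is extracted with remainder in $\cA^{(l+2-s)-}$; summing over $l$ and isolating the extremal $l=|s|$ mode gives the second displayed assertion with $w(s)\in\{1,1-2s\}$.

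For part (2), with $\sigma\neq 0$ and $\Im\sigma\geq 0$, the relevant structure at $\pa_+X$ is scattering rather than purely b, since the prefactor $e^{i\sigma F(r)}$ built into $\hat T_s(\sigma)$ turns it into a scattering-type operator at null infinity whose radial set is controlled by the sign of $\Im\sigma$ and the outgoing/incoming split (smooth across $\scri^+$ thanks to $t_*$). A scattering radial-point estimate at the outgoing radial set, combined with ellipticity of $\hat T_s(\sigma)$ away from it, yields the $\rho^1$ leading asymptotic; the subprincipal threshold to check is a direct computation from the explicit form of $\hat T_s(\sigma)$ at $\pa_+X$. The main obstacle overall is the residue-vanishing step in part (1); the scattering argument for part (2) is standard once the principal/subprincipal data at $\pa_+X$ are computed.
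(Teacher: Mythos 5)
Your overall strategy --- decompose into the modes $\scalspace^{[s]}_l$, identify the b-normal operator and its indicial roots $\beta_+=l+1-s$, $\beta_-=-(l+s)$, and iterate a Mellin-transform/contour-shift argument --- is exactly the paper's, and your indicial polynomial is correct. But the step you single out as "the main obstacle", excluding the root $\beta_-$, is where your argument goes wrong: neither of your proposed mechanisms (cumulative vanishing of residues of the forcing at $\lambda=-i\beta_-$, or "cannot arise through the scheme combined with compatibility at $\pa_-X$") is a proof, and the first is in fact false in general --- the Mellin transform of $(\cN-\hat T_s(0))\hat\Psi^l_{[s]}$ has no reason to be regular at that pole. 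The actual resolution is much simpler and you have mis-assessed when $\beta_-$ is dangerous: under the stated hypotheses one always has $q(s)+\tfrac32>-(l+s)$ (for $s\ge 0$ this is $q+\tfrac32-2s>-(l+s)$, i.e.\ $l+s>2s-q-\tfrac32$ with $q>-\tfrac32$ and $l\ge s$; for $s<0$ it is $q+\tfrac32>0\ge-(l+s)$), so $\rho^{-(l+s)}\notin\Hbext^{\infty,q(s),l}$. Since $\hat\Psi^l_{[s]}$, the $\rho^{l+1-s}$ term, and the remainder all lie in $\Hbext^{\infty,q(s),l}$, the coefficient of $\rho^{-(l+s)}$ produced by the contour shift must vanish identically. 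No residue computation is needed; the a priori weighted membership does all the work.

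Two further points are missing. First, the passage from the mode-by-mode expansion to the summed statement $\hat\Psi_{[s]}-\rho^{w(s)}\hat\Psi^0_{[s]}\in\cA^{(w(s)+1)-}$ is not automatic: for each $l$ you can only push the expansion to order $l+2-s$, and to sum over $l$ you need (i) that for $l\ge|s|+2$ both indicial roots stay outside the relevant Mellin strip for enough iterations, so those modes decay like $\rho^{w(s)+1}$ or better, and (ii) estimates $\Vert\hat\Psi^l_{[s]}\Vert_{\Hbext^{\tilde m,q(s)+1,l}}\le C\Vert\hat\Psi^l_{[s]}\Vert_{\Hbext^{\tilde m,q(s),l}}$ with $C$ \emph{independent of $l$}, which the paper obtains from Parseval and the uniform boundedness of $P(\lambda)^{-1}$ on the shifted contour. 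Your proposal is silent on this. Second, for part (2) a radial point estimate gives conormality of $\hat\Psi_{[s]}$ at $\pa_+X$ but not the leading term; the $\rho^1$ asymptotic with remainder in $\cA^{2-}$ comes from the same Mellin argument applied to the normal operator of $\hat T_s(\sigma)$ at $\rho=0$, which for $\sigma\neq 0$ is the \emph{first-order} operator $-2\sigma\rho(\rho D_\rho+i)$ with boundary spectrum the single point $-i$, i.e.\ the single indicial power $\rho^1$. You should identify this operator explicitly rather than attributing the expansion to the radial point estimate alone.
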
 
\begin{proof}
\begin{enumerate}
\item As noted at the end of Section \ref{Sec4.3} each $\hat{\Psi}_{[s]}^{l}$ is a solution of $\hat{T}_{[s]}(0)\hat{\Psi}^{l}_{[s]}=0$. 
Now note that $\hat{\Psi}^{l}_{[s]}\in \Hbext^{\infty,q(s),l}(X)$. This follows by elliptic regularity, propagation of regularity at the radial sets at the horizons and real principal type propagation. We refer to \cite{HHV}, \cite{M2} for details. The normal operator at $\rho=0$ is given by 
\[\cN=\rho^2(-(\rho\pa_{\rho})^2+(1-2s)\rho\pa_{\rho}-\Delta^{[s]}+2s).\]
Restricted to $\scalspace^{[s]}_{l}$, $-\Delta^{[s]}$ acts by multiplication with $\lambda_{[s]}^{l}$. To compute the boundary spectrum we Mellin transform the equation. We find the indicial equation 
\begin{equation}
\label{ind0}
P(\lambda)=\lambda^2+i(1-2s)\lambda+l(l+1)-s(s-1)=0,
\end{equation}
which has the roots $i(s+l)$ and $-i(l+1-s)$. Here $l\ge \vert s\vert$. To understand the asymptotic behavior of $\hat{\Psi}^{l}_{[s]}$ we compute 
\begin{eqnarray}
\label{4.11a}
\rho^{-2}\cN(\chi \hat{\Psi}^{l}_{[s]})&=&\rho^{-2}(\cN-\hat{T}_s(0))(\chi\hat{\Psi}^{l}_{[s]})+\rho^{-2}[\hat{T}_s(0),\chi]\hat{\Psi}^{l}_{[s]}\in {}\Hbext^{\infty,q(s)+1,l}, 
\end{eqnarray}
where $\chi$ is a cut-off, identically $1$ near $\pa_+X$.  We Mellin transform this equation, divide by $(\lambda-i(s+l))(\lambda+(i(l+1-s))$ and integrate along $\Im\, \lambda=-(q(s)+5/2)$, for the inverse Mellin transform\footnote{If $q(s)+5/2=l+1-s$ we replace $q(s)+5/2$ by $q(s)+5/2-\epsilon$ for $\epsilon>0$ small.}. Shifting the contour and using Cauchy's formula gives 
\begin{align}
\label{pexpsi}
\hat{\Psi}^{l}_{[s]}=c^{l}_0\rho^{-(s+l)}+c^{l}_1\rho^{l+1-s}+v,\, c^{l}_0,\, c^{l}_1\in  \scalspace^{[s]}_{l},\, v\in \Hbext^{\infty,q(s)+1,l}.
\end{align}
As $q(s)+3/2>-(l+s)$ we have $c_0^{l}\rho^{-(s+l)}\notin \Hbext^{\infty,q(s),l}$ for $c_0^{l}\neq 0$ and thus $c^{l}_0=0$. This gives 
\begin{align}
\label{4.9}
\hat{\Psi}^{l}_{[s]}-c^{l}_1\rho^{l+1-s}\in \Hbext^{\infty,q(s)+1,l}.
\end{align} 
Now either $q(s)+1>l+1/2-s$ and we are done because $\Hbext^{\infty, l+1/2-s}\subset \cA^{(l+2-s)-}$ or we have to repeat the argument.  Therefore let us suppose that we have 
\begin{align}
\label{4.9n}
\hat{\Psi}^{l}_{[s]}-c^{l}_n\rho^{l+1-s}\in \Hbext^{\infty,q(s)+n,l}
\end{align} 
after $n$ steps. We now apply the same procedure to $\hat{\Psi}^{l}_{[s]}-c^{l}_n\rho^{l+1-s}$. We have 
\begin{align*}
\rho^{-2}\cN(\chi(\hat{\Psi}^{l}_{[s]}-c^{l}_n\rho^{l+1-s}))&=\rho^{-2}(\cN-\hat{T}_s(0))(\chi(\hat{\Psi}^{l}_{[s]}-c^{l}_n\rho^{l+1-s}))\\
&+\rho^{-2}(\hat{T}_s(0)-\cN)\chi c^{l}_n\rho^{l+1-s}\\
&-c^l_n\rho^{-2}[\cN,\chi]\rho^{l+1-s}
+\rho^{-2}[\hat{T}_s(0),\chi]\hat{\Psi}^{l}_{[s]}.
\end{align*}
We have 
\begin{align*}
&\rho^{-2}(\cN-\hat{T}_s(0))(\chi(\hat{\Psi}^{l}_{[s]}-c^{l}_n\rho^{l+1-s}))\in \Hbext^{\infty,\min\{l+1/2-s,q(s)+n+1\}-,l},\\
&\rho^{-2}(\cN-\hat{T}_{[s]}(0))\chi c^{l}_n\rho^{l+1-s}\in \cA^{(l-s+2)},\\
&c^{l}_1\rho^{-2}[\cN,\chi]\rho^{l+1-s},\, \rho^{-2}[\hat{T}_s(0),\chi]\hat{\Psi}^{l}_{[s]}\in \Hbext^{\infty,\infty,l}. 
\end{align*}
Now either $q(s)+n+1>l+1/2-s$ and we are done or we obtain \eqref{4.9n} with $n$ replaced by $n+1$. By induction we obtain the result after a finite number of steps.

Let us now prove the second claim in (1). By the same argument as before we know that $\hat{\Psi}_{[s]}\in \Hbext^{\infty,q(s)}$ and we have the estimate 
\begin{align*}
\Vert \hat{\Psi}_{[s]}\Vert_{\Hbext^{\tilde{m},q(s)}}\le C_{\tilde{m}} \Vert \hat{\Psi}_{[s]}\Vert_{\Hbext^{m,q(s)}},\quad \forall \tilde{m}\ge m. 
\end{align*}
By restriction we obtain the same estimate for $\hat{\Psi}^l_{[s]}$. Now we first repeat the same procedure as before for $\hat{\Psi}^{l}_{[s]},\, \vert s\vert\le l\le \vert s\vert+1$. Let then $l\ge\vert s\vert+2$. 
For those $l$ we repeat the argument $n$ times. As long as 
\begin{align*}
q(s)+3/2+n< l+1-s
\end{align*}
$c_0^l$ and $c_1^l$ in \eqref{pexpsi} are both zero. We obtain sufficient decay by this if 
\begin{align*}
q(s)+3/2+n\ge w(s)+1.
\end{align*}
Both are fulfilled if 
\begin{align*}
1/2+2\vert s\vert-q\le n< l+\vert s\vert-1/2-q,
\end{align*}
which can be arranged because $l\ge \vert s\vert+2$. We will check that we obtain estimates which are uniform in $l$. 
Let
\begin{align*}
\tilde{f}^l(\lambda)=\cM(\rho^{-2}(\cN-\hat{T}_s(0))(\chi\hat{\Psi}^{l}_{[s]})+\rho^{-2}[\hat{T}_s(0),\chi]\hat{\Psi}^{l}_{[s]})(\lambda). 
\end{align*}
We then have 
\begin{align*}
\hat{\Psi}^l_{[s]}=\cM^{-1}(P^{-1}(\lambda)\tilde{f}^l(\lambda)). 
\end{align*}
We now use Parseval's identity, the fact that $P^{-1}(\lambda)$ is uniformly bounded on the contour as well as 
\begin{align*}
\Vert \cM^{-1}(\tilde{f}^l(\lambda))\Vert_{\Hbext^{\tilde{m},q(s)+1,l}}\le C\Vert \hat{\Psi}^l_{[s]}\Vert_{\Hbext^{\tilde{m},q(s),l}}
\end{align*}
with $C$ independent of $l$. This gives 
\begin{align*}
\Vert \hat{\Psi}^l_{[s]}\Vert_{\Hbext^{\tilde{m},q(s)+1,l}}\le C\Vert \hat{\Psi}^l_{[s]}\Vert_{\Hbext^{\tilde{m},q(s),l}}
\end{align*}
with $C$ independent of $l$. We obtain equivalent estimates in each step. Summarizing we find that for all $l>\vert s\vert+1$, $\hat{\Psi}^l_{[s]}\in \Hbext^{\tilde{m},w(s)-1/2,l}$ for all $\tilde{m}\ge m$ and we have an estimate 
\begin{align*}
\Vert \hat{\Psi}_{[s]}^l\Vert_{\Hbext^{\tilde{m},w(s)-1/2,l}}\le C \Vert \hat{\Psi}_{[s]}^l\Vert_{\Hbext^{\tilde{m},q(s),l}}
\end{align*}
with $C$ independent of $l$. Using the continuous embedding $\Hbext^{\infty,q}\subset \cA^{q+3/2}$ and the convergence of the series $\sum_{l}\hat{\Psi}^{l}_{[s]}$ in $\Hbext^{m,q(s)}$ this gives the claim. 

\item Smoothness away from $\partial X$ follows as in part (1), while the radial point estimates in \cite{Va1} show that $\hat{\Psi}_{[s]}$ is conormal at $\partial X$. We refer to \cite{HHV}, \cite{M2} for details. We see that the normal operator at $\rho=0$ is 
\[\cN=-2\sigma\rho(\rho D_{\rho}+i).\]
The boundary spectrum of the normal operator consists of the single point 
\[\{(-i,0)\}.\] 
The asymptotics is then established in the same way as in (1). 
\end{enumerate}
\end{proof}

\subsubsection{Asymptotic behavior in the $(t,r,\omega)$ coordinate system}

Suppose $\hat{\Psi}_{[s]}\in \Hbext^{m,q(s)}(X),$ with $m$ and $q(s)$ fulfilling the conditions of Proposition \ref{prop3.1}, 
\begin{equation}
\label{3.2.1}
\hat{T}_s(\sigma)\hat{\Psi}_{[s]}(\rho,\omega_*)=0
\end{equation} 
and  
\begin{equation*}
\hat{\Psi}_{[s]}(\rho,\omega_*)=e^{ik\varphi_*}\hat{F}_{[s]}(\rho,\theta)
\end{equation*}
for some $k\in \Z$. Note that $\hat{\Psi}_{[s]}, \hat{F}_{[s]}$ also depend on $\sigma,\, k$. Then the function 
\begin{equation*}
\Psi_{[s]}(t,r,\omega)=e^{-i\sigma t_*(t,r)}e^{ik\varphi_*(\varphi,r)}\Delta_b^{-s}\hat{F}_{[s]}(\sigma,k,\rho,\omega)=:e^{-i\sigma t}e^{ik\varphi}F_{[s]}(r,\theta)
\end{equation*}
is a solution of \eqref{TME}.
Note that we have for $r\le 3\bhm$ 
\begin{align*}
F_{[s]}(r,\theta)=e^{ik\int\frac{a}{\Delta_b}dr}e^{-i\sigma r_*}\Delta_b^{-s}\hat{F}_{[s]}(\sigma,k,\rho(r),\theta)
\end{align*}
and for $r\ge 4\bhm$ 
\begin{align*}
F_{[s]}(r,\theta)=e^{i\sigma r_*}\Delta_b^{-s}\hat{F}_{[s]}(\sigma,k,\rho(r),\theta).
\end{align*}
Let $\xi:=\frac{i(ak-2\bhm r_+\sigma)}{(r_+-r_-)}$. In the $\sigma=0$ case we require that $\hat{F}_{[s]}(\rho(r),\theta)=R(r)S_{l}(\theta)$ for some fixed $l$, where $e^{ik\varphi}S_{l}(\theta)$ is an eigenfunction of $-\Delta^{[s]}$(acting on $H^2(\slashed{\mathcal B}(s))$ with eigenvalue $\lambda^{l}_{[s]}$).    
\begin{prop}
\label{prop4.3}
We have 
\begin{align*}
F_{[s]}(r,\theta)&\sim (r-r_+)^{\xi-s},\, r\rightarrow r_+,\\
F_{[s]}(r,\theta)&\sim e^{i\sigma r}r^{2i\bhm\sigma}\frac{1}{r^{1+2s}},\,  r\rightarrow\infty,\, \sigma\neq 0,\, \\
F_{[s]}(r,\theta)&\sim \frac{1}{r^{1+l+s}},\,  r\rightarrow\infty, \, \sigma=0.
\end{align*}
\end{prop}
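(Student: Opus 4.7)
The plan is to read off each of the three asymptotic regimes directly from the coordinate identities relating $F_{[s]}$ to $\hat F_{[s]}$, combined with the polyhomogeneous expansions already established in Proposition \ref{prop3.1}. In the near-horizon patch we have
\[
F_{[s]}(r,\theta) = e^{ik\int \frac{a}{\Delta_b}\,dr}\,e^{-i\sigma r_*}\,\Delta_b^{-s}\,\hat F_{[s]}(\rho(r),\theta),
\]
and in the asymptotic patch $r\ge 4\bhm$ we have $F_{[s]} = e^{i\sigma r_*}\Delta_b^{-s}\hat F_{[s]}$. All three statements reduce to tracking the leading behavior of these explicit prefactors and invoking the leading term of $\hat F_{[s]}$ from Proposition \ref{prop3.1}.

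For the horizon asymptotic, the first step is to note that $\hat F_{[s]}$ extends smoothly up to $r=r_+$ by the same elliptic regularity, radial point propagation, and real principal type propagation arguments used in the proof of Proposition \ref{prop3.1}. The factor $\Delta_b^{-s}=(r-r_+)^{-s}(r-r_-)^{-s}$ then contributes $(r-r_+)^{-s}$ times a smooth nonvanishing factor. The remaining exponential prefactors are handled by reading off the residues of $\frac{r^2+a^2}{\Delta_b}$ and $\frac{a}{\Delta_b}$ at $r_+$, which, using $r_+^2+a^2=2\bhm r_+$, are $\frac{2\bhm r_+}{r_+-r_-}$ and $\frac{a}{r_+-r_-}$ respectively; this yields $r_*$ and $\int a/\Delta_b\,dr$ as $\frac{2\bhm r_+}{r_+-r_-}\log(r-r_+)$ and $\frac{a}{r_+-r_-}\log(r-r_+)$ plus smooth remainders. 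Combining the two exponentials produces $(r-r_+)^{\xi}$ times a smooth factor, so multiplication with $\Delta_b^{-s}$ and the smooth $\hat F_{[s]}$ gives the stated $(r-r_+)^{\xi-s}$ behavior.

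For the two behaviors at infinity, I would expand $\frac{r^2+a^2}{\Delta_b}=1+\frac{2\bhm}{r}+O(r^{-2})$ to obtain $r_*=r+2\bhm\log r+O(r^{-1})$, so that $e^{i\sigma r_*}\sim e^{i\sigma r}\,r^{2i\bhm\sigma}$, and note that $\Delta_b^{-s}\sim r^{-2s}$. When $\sigma\ne 0$, Proposition \ref{prop3.1}(2) gives $\hat F_{[s]}\sim \rho\cdot(\text{angular profile})=r^{-1}(\text{angular profile})$, which combined with the two prefactors yields $F_{[s]}\sim e^{i\sigma r}r^{2i\bhm\sigma}r^{-(1+2s)}$. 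When $\sigma=0$, the assumption that $\hat F_{[s]}$ lies in the single eigenspace $\scalspace^{[s]}_l$ lets me apply Proposition \ref{prop3.1}(1) directly to that $l$-mode, giving leading term $\rho^{l+1-s}=r^{-(l+1-s)}$; multiplying by $\Delta_b^{-s}\sim r^{-2s}$ (there is no $r_*$ factor since $\sigma=0$) yields $r^{-(1+l+s)}$ as claimed.

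I do not anticipate a serious analytic obstacle, since the proposition is essentially bookkeeping once Proposition \ref{prop3.1} is available: what has been hidden in the statement ``$\sim$'' are only smooth nonvanishing prefactors. The one point that warrants care is the near-horizon factorization of $e^{ik\int a/\Delta_b\,dr}e^{-i\sigma r_*}$ as $(r-r_+)^\xi$ times a smooth function, which implicitly requires a branch choice for the complex power since $\xi$ need not be real; I would fix such a branch once and for all on $r>r_+$ and note that any nonzero constant prefactor is absorbed into the equivalence symbol.
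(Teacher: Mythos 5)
Your proposal is correct and follows essentially the same route as the paper: all three asymptotics are read off from the explicit prefactors $e^{\pm i\sigma r_*}$, $e^{ik\int \bha/\Delta_b\,dr}$, $\Delta_b^{-s}$ relating $F_{[s]}$ to $\hat F_{[s]}$, combined with the leading terms of $\hat F_{[s]}$ from Proposition \ref{prop3.1} at infinity and its continuity at $r=r_+$ near the horizon. Your version is in fact slightly more careful than the paper's, which leaves the $\Delta_b^{-s}$ contribution at the horizon and the branch choice for the complex power implicit.
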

\begin{proof}
For $\sigma=0$ the asymptotic behavior at $r=\infty$ immediatly follows from Proposition \ref{prop3.1}. Using that $r_*\sim r+2\bhm \ln (r)$ close to $\infty$ we obtain the asymptotic behavior at $\infty$ also in the $\sigma\neq 0$ case. Concerning the behavior at $r=r_+$ we observe that $\hat{F}_{[s]}$ has to be continuous at $r=r_+$. We then use 
\begin{eqnarray*}
r_*&=&\int_{3\bhm}^r \frac{r'^2+\bha^2}{\Delta_b(r')}dr'\\
&\sim&\frac{r_+^2+\bha^2}{(r_+-r_-)}\ln (r-r_+),\, r\rightarrow r_+,\\
\int_{3\bhm}^r\frac{\bha}{\Delta_b(r')}dr'&\sim&\frac{\bha}{r_+-r_-}\ln (r-r_+),\, r\rightarrow r_+
\end{eqnarray*}
and obtain
\[e^{-i\sigma r_*}e^{im\int\frac{\bha}{\Delta_b}dr}\sim(r-r_+)^\xi,\]
which gives the asymptotic behavior at $r=r_+$. 
\end{proof}

\subsection{Absence of modes}
The following theorem follows from a theorem by Whiting \cite{Wh} in the $\Im \sigma>0$ case and for $\Im\sigma=0,\, \sigma\neq 0$ from a theorem of Andersson, Ma, Paganani and Whiting \cite{AMPW}. 
\begin{thm}
\label{Thm3.1}
Let $q(s)=q-2s$ if $s\ge 0$ and $q(s)=q$ if $s<0$. 
\begin{enumerate}
\item Let $\Im\,\sigma\ge 0,\,\sigma\neq 0,\, 
m>1/2+s,\, q(s)<-1/2,\, m+q(s)>-1/2-2s.$ 
Suppose that  
\begin{equation*}
\hat{T}_s(\sigma)\hat{\Psi}_{[s]}(\rho,\omega_*)=0,
\end{equation*}
with $\hat{\Psi}_{[s]}\in \Hbext^{m,q(s)}(X)$. Then $\hat{\Psi}_{[s]}=0$. 
\item Let $m>1/2+s,\, q\in(-3/2,-1/2),\, m+q(s)>-1/2-2s$.  Suppose that  
\begin{equation*}
\hat{T}_s(0)\hat{\Psi}_{[s]}(\rho,\omega_*)=0,
\end{equation*}
with $\hat{\Psi}_{[s]}\in \Hbext^{m,q(s)}(X)$. Then $\hat{\Psi}_{[s]}=0$. 
\end{enumerate}
\end{thm}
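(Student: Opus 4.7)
The plan is to combine the asymptotic analysis of Propositions \ref{prop3.1} and \ref{prop4.3} with the existing mode-stability theorems for part (1), and to carry out a direct analysis of the separated ODE for the genuinely new case $\sigma = 0$ in part (2).

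For part (1), I would first Fourier-decompose $\hat{\Psi}_{[s]}$ in the Killing direction $\partial_{\varphi_*}$, which commutes with $\hat{T}_s(\sigma)$, reducing to a countable family of problems indexed by $k \in \Z$. For each $k$ the corresponding Teukolsky solution in Boyer-Lindquist coordinates is $\Psi_{[s]} = e^{-i\sigma t} e^{ik\varphi} F_{[s]}(r,\theta)$, and Proposition \ref{prop4.3} identifies its leading behavior as $(r-r_+)^{\xi-s}$ at the horizon and $e^{i\sigma r} r^{2i\bhm\sigma - 1 - 2s}$ at infinity. These are precisely the boundary conditions defining a mode solution in the sense of Whiting (for $\Im\sigma > 0$) and of Andersson-Ma-Paganini-Whiting (for real $\sigma \neq 0$); invoking their theorems yields $F_{[s]} \equiv 0$ for every $k$, and Plancherel in $\varphi_*$ then gives $\hat{\Psi}_{[s]} = 0$.

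For part (2), at $\sigma = 0$, I would proceed by full separation of variables. By Proposition \ref{prop3.1}(1) we have $\hat{\Psi}_{[s]} = \sum_{l \geq |s|} \hat{\Psi}^l_{[s]}$ with each summand in $\Hbext^{\infty, q(s), l}$ of leading order $\rho^{l+1-s}$, and a further Fourier decomposition in $\varphi_*$ reduces each $\hat{\Psi}^l_{[s]}$ to a family of radial ODEs for $R_{l,k}(r)$ indexed by $|k| \leq l$. The structural point is that at $\sigma = 0$ the radial Teukolsky ODE becomes a Heun-type equation with three \emph{regular} singular points at $r_\pm$ and $\infty$, rather than the confluent Heun equation with an irregular singularity at infinity that occurs for $\sigma \neq 0$. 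The Frobenius indices at $r_+$ are $\xi - s$ and $-\xi$, and at infinity are $-(l+1-s)$ and $l+s$; Propositions \ref{prop3.1} and \ref{prop4.3} single out the decaying Frobenius ray at each end. I would then show that these one-dimensional choices cannot coincide except for the zero solution, either via an explicit connection-formula computation (which reduces to hypergeometric connection coefficients when $k = 0$) or via an integration-by-parts identity exploiting the prescribed decay rates, which are strong enough to kill the boundary terms.

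The main obstacle is the case $k \neq 0$ at $\sigma = 0$: the Heun connection problem is not solvable in closed form, and the naive integration-by-parts fails because the near-horizon factor $(r-r_+)^\xi$ with purely imaginary $\xi$ does not decay. One promising remedy is to adapt Whiting's original differential/integral transformation, which converts the Teukolsky equation for $\Im\sigma > 0$ into an equation amenable to a positivity argument, to the limit $\sigma \to 0$ by tracking the degeneration of the transformation kernel. An alternative is to invoke a Teukolsky-Starobinsky-type identity at $\sigma = 0$ to couple the spin weights $+s$ and $-s$ into a symmetric problem whose resulting quadratic form is manifestly definite, thereby forcing triviality directly.
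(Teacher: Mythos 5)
Your overall strategy for part (1) matches the paper's, but there are three genuine gaps. First, after decomposing only in $\varphi_*$ you propose to apply the theorems of \cite{Wh} and \cite{AMPW} directly to $F_{[s]}(r,\theta)$. Those results are statements about the \emph{fully separated} radial equation with a fixed angular separation constant, and for $\Im\sigma>0$ the angular operator $\cA_k$ (which contains $\sigma^2\bha^2\sin^2\theta$) is not self-adjoint, so one cannot simply expand $F_{[s]}$ in spheroidal eigenfunctions. The paper closes this gap by invoking the completeness result of \cite[Theorem 1.1]{FS1} for the spin-weighted spheroidal operator with complex aspherical parameter and then running an induction through the Jordan blocks of $\cA_k$ restricted to each finite-dimensional invariant subspace; without some such completeness statement your reduction to the separated ODE is not justified. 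Second, in both parts you omit the step of propagating the vanishing from the exterior $r>r_+$ into the region $r_0\le r\le r_+$ beyond the horizon, which is part of $X$; the paper does this with a weighted energy/Carleman argument in the spirit of \cite[Lemma 1]{Zw}, using that the solution is conormal and hence vanishes to all orders at $r=r_+$.

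For part (2) the proposal does not actually contain a proof of the essential new case. You correctly observe that at $\sigma=0$ all three singular points $r_\pm,\infty$ of \eqref{TMER} are regular, but you then describe the equation as ``Heun-type'' and declare the $k\neq 0$ connection problem unsolvable in closed form, proposing instead to degenerate Whiting's transformation or to use Teukolsky--Starobinsky identities. An ODE with exactly three regular singular points is a Riemann $P$-equation, i.e.\ reducible to the \emph{hypergeometric} equation (a Heun equation has four); this is precisely what the paper exploits: after the substitution removing the exponents $\pm i\bha k/(r_+-r_-)$ at $r_\pm$ one arrives at \eqref{hypergeom}, and the explicit hypergeometric solutions $u_1,u_2$, together with the asymptotics forced by Propositions \ref{prop3.1} and \ref{prop4.3} at $\uprho=1$ and $\uprho=\infty$ and the convergence of the relevant series on $|z|=1$, give $c=d=0$ directly. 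So the case you flag as the main obstacle is in fact the one place where the argument closes cleanly by classical special-function theory; as written, your proof of part (2) for $k\neq 0$ is incomplete.
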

\begin{proof}
We can suppose that $\hat{\Psi}_{[s]}$ has a single mode in $\varphi_*$: 
\[\hat{\Psi}_{[s]}(\rho,\theta,\varphi_*)=e^{ik\varphi_*}\hat{F}_{[s]}(\rho,\theta).\]
We then build up $F_{[s]}(r,\theta)$ as in the previous subsection. To treat the case $\sigma\neq 0$ we use the results of \cite{Wh} and \cite{AMPW}. Note that the results in  
\cite{Wh} and \cite{AMPW} are results for the separated equation. To argue that this gives the absence of modes for general functions in our Sobolev space, we can apply Theorem 1.1 in \cite{FS1}. To see this in a bit more detail, we first come back to the $(t,r,\omega)$ coordinate system. Let 
\begin{align*}
F_{[s]}(r,\theta)&=e^{ik(\varphi_*-\varphi)}e^{i\sigma(t-t_*)}\Delta_b^{-s}\hat{F}_{[s]}(\rho(r),\theta),\, \cH_k=e^{ik\varphi} L^2((0,\pi),\sin \theta d\theta),\\
\cA_k&=\sigma^2\bha^2\sin^2\theta+\frac{k^2}{\sin^2\theta}+2sk\frac{\cos\theta}{\sin^2\theta}+s^2\cot^2\theta+2s\sigma \bha\cos\theta-\frac{1}{\sin\theta}\partial_{\theta}\sin\theta\partial_{\theta}.
\end{align*}
We have 
\begin{align*}
L^2(\R\times \Sph^2;dr_*d\omega)=\bigoplus_kL^2(\R,dr_*)\otimes \cH_k.
\end{align*}
Note that 
\begin{align*}
T_s\vert_{L^2(\R)\otimes{\cH}_k}=\cR_k+\cA_k,
\end{align*}
where $\cR_k$ is an operator only in the $r_*$ variable. Let us now fix $c>0,\, s$ and $k$ and let $U\subset \C$ be the strip $\vert \Im\, \sigma\vert<c$. Then by \cite[Theorem 1.1]{FS1} there exists a family of bounded operators $Q_n(\sigma)$ on $\cH_k$ defined for all $n\in \N$ and $\sigma\in U$ such that   
\begin{enumerate}
\item The image of each of the operators $Q_n(\sigma)$ is a finite dimensional invariant subspace of $\cA_k$.\footnote{The dimension of the image is at most $2$ for $n\ge1$, but this is not important for our purposes.} 
\item The $Q_n(\sigma)$ are complete in the sense that for every $\sigma\in U$
\begin{align*}
\sum_{n=0}^{\infty}Q_n={\bf 1}.
\end{align*}  
\end{enumerate}

Let ${\cH}_k^n=Q_n{\cH}_k$. By general linear algebra $\cA_k$ acting on $\cH^n_k$ can be decomposed into Jordan blocks. We can suppose that $\cA_k\vert_{\cH^n_k}$ is described by a single Jordan block and that $F_{[s]}(r,\theta)$ writes as 
\begin{align*}
F_{[s]}(r,\theta)=\sum_{j=1}^pf_j(r)g_j(\theta)
\end{align*}  
with 
\begin{align*}
\cA_k g_1&=\lambda g_1+g_2,\\
\cA_k g_2&=\lambda g_2+g_3,\\
...&...\\
\cA_kg_p&=\lambda g_p. 
\end{align*}
Here $\{g_j\}_{j=1,...,p}$ is a basis of $\cH^n_k$. Now,
\begin{align*}
0=T_sF_{[s]}(r,\theta)=\sum_{j=1}^p(\cR f_j)g_j+\sum_{j=1}^{p-1}f_j(\lambda g_j+g_{j+1})+f_p\lambda g_p.
\end{align*}
As $\{g_j\}_{j=1,...,p}$ is a basis of $\cH^n_k$ we first obtain 
\begin{align*}
\cR f_1+\lambda f_1=0
\end{align*}
and thus $f_1=0$ by the results of \cite{Wh} and \cite{AMPW}. Proceeding with $j=2$ we find 
\begin{align*}
\cR f_2+\lambda f_2=0
\end{align*}
    and thus $f_2=0$. We eventually find $f_1=f_2=...=f_p=0$ and thus $F_{[s]}=0$. This gives $\hat{\Psi}_{[s]}(r,\omega_*)=0$ for $r>r_+$. To show that $\hat{\Psi}_{[s]}(r,\omega_*)=0$ for $r\le r_+$ we apply the same argument as in the proof of \cite[Lemma 1]{Zw}. We can suppose that $\hat{\Psi}_{[s]}(r,\theta,\varphi)=e^{-ik\varphi_*}u(r,\theta)=:\psi$. Let 
    \begin{align*}
    \cE(r)=\int_{\Sph^2}(-(r-r_+))^{-N}(\Delta_b\sin\theta\vert \partial_r\psi\vert^2+\frac{1}{\sin\theta}\vert \partial_{\varphi_*}\psi\vert^2+ \sin\theta \vert\partial_{\theta}\psi\vert^2+\vert \psi\vert^2)d\theta d\varphi_*.
    \end{align*}
    We have 
    \begin{align*}
    \frac{d}{dr}\cE(r)=N(-(r-r_+))^{-1}\cE(r)+\int_{\Sph^2}(-(r-r_+))^{-N}R(\sigma,\psi,\frac{1}{\sqrt{\sin\theta}}\partial_{\varphi_*}\psi,\partial_r\psi,\sqrt{\sin\theta}\partial_{\theta}\psi),
    \end{align*}
    where $R$ is quadratic in $(\psi,\frac{1}{\sqrt{\sin\theta}}\partial_{\varphi_*}\psi,\partial_r\psi,\sqrt{\sin\theta}\partial_{\theta}\psi)$ and independent of $N$.\footnote{This wouldn't be true without fixing the angular momentum.} Thus for $N$ sufficiently large we have $ \frac{d}{dr}\cE(r)\ge 0$ for $r\le r_+$. Integrating between $r_+-\delta$ and $r_+-\epsilon\, (\delta>\epsilon)$ gives 
    \begin{eqnarray*}
    \lefteqn{\int_{\Sph^2}\delta^{-N}(\delta^2\sin\theta\vert \partial_r\psi\vert^2+\frac{1}{\sin\theta}\vert \partial_{\varphi_*}\psi\vert^2+\vert \sin\theta\psi\vert^2+\vert \psi\vert^2)d\theta d\varphi_*}\\
    &\lesssim& \epsilon^{-N}\int_{\Sph^2}\sin\theta\vert\partial_r\psi\vert^2+\frac{1}{\sin\theta}\vert \partial_{\varphi_*}\psi\vert^2+\vert \sin\theta\psi\vert^2+\vert \psi\vert^2)d\theta d\varphi_*\\
    &\lesssim& \epsilon^{-N+K}
    \end{eqnarray*}
    for all $K>0$. Indeed by the argument in the proof of Proposition \ref{prop3.1} we know that $\psi\in \Hbext^{\infty,q(s)}$. It therefore vanishes to all orders at $r=r_+$. Choosing $K$ large enough and letting $\epsilon\rightarrow 0$ gives $\psi((r=r_+-\delta),\theta)=0$. With the operator $\hat{T}_s(\sigma)$ being hyperbolic and $r=r_+-\delta$ being spacelike, this gives $\psi=0$ by classical energy estimates.   

It remains to treat the case $\sigma=0$. In this case, the operator $\cA_k$ is diagonalisable and it suffices to consider $F_{[s]}$ of the form $F_{[s]}(r,\theta)=S_{l}(\theta)R(r)$. The equation then further decouples. The function $R(r)$ fulfills :
\begin{equation}
\label{TMER}
\Delta_b^{-s}\frac{d}{dr}\Delta_b^{s+1}\frac{d}{dr}R(r)+\frac{\bha^2k^2+2i\bha(r-\bhm)ks}{\Delta_b}R-AR=0,
\end{equation}
where $A=(l-s)(l+s+1)$. If $k=0$, then we can multiply \eqref{TMER} by $\bar{R}$ and integrate by parts. This gives 
\begin{align}
\label{TMI}
\int_{r_+}^{\infty}\Delta_b\left\vert \frac{dR}{dr}\right\vert^2+A\left\vert R\right\vert^2 dr=0.
\end{align}
If $s<0$, then $A>0$ and \eqref{TMI} gives $R=0$. Let $s\ge 0$. If $A>0$, then \eqref{TMI} gives $R=0$ as before. If $A=0$, then \eqref{TMI} gives $R=const.$ which is only permitted if the constant is zero. Therefore we can suppose $k\neq 0$ in the following. 
The Teukolsky equation \eqref{TMER} has three regular singular points which are $r=r_{\pm}$ and $r=\infty$. For the general theory of this type of equations see \cite{Kr}.
\begin{enumerate}
\item Study of the singular points $r=r_{\pm}$. We rewrite the Teukolsky equation as 
\begin{equation*}
\frac{d^2}{dr^2}R(r)+2(s+1)\frac{r-\bhm}{\Delta_b}\frac{dR}{dr}+\frac{\bha^2k^2+2i\bha(r-\bhm)ks}{\Delta_b^2}R-\frac{A}{\Delta_b}R=0.
\end{equation*}
Noting that 
\[2\frac{r_{\pm}-\bhm}{r_{\pm}-r_{\mp}}=1\]
we find the indicial equation at $r=r_{\pm}$:
\begin{equation*}
\alpha_{\pm}^2+s\alpha_{\pm}+\frac{\bha^2k^2+i\bha(r_{\pm}-r_{\mp})ks}{(r_+-r_-)^2}=0
\end{equation*}
with roots 
\begin{align*}
\alpha_+=-\frac{i\bha k}{r_+-r_-}\quad\mbox{or}\quad \alpha_+=-s+\frac{i\bha k}{r_+-r_-}
\end{align*}
 resp. 
 \begin{align*}
 \alpha_-=\frac{i\bha k}{r_+-r_-}\quad\mbox{or}\quad \alpha_-=-s-\frac{i\bha k}{r_+-r_-}.
 \end{align*} 
\item Study of the singular point at $r=\infty$. We put $z=\frac{1}{r}$. We then have 
\[\frac{d}{dr}=-z^2\frac{d}{dz}.\] 
The Teukolsky equation \eqref{TMER} can be written as 
\begin{eqnarray*}
\Delta_b^{-s}z^2\frac{d}{dz}\left(\Delta_b^{s+1}z^2\frac{dR}{dz}\right)+\left(\frac{\bha^2k^2+2i\bha (r-\bhm)ks}{\Delta_b}-A\right)R&=&0\\
\Leftrightarrow \frac{d^2R}{dz^2}-\frac{2(s+1)}{z^2\Delta_b}\left(\frac{1}{z}-\bhm\right)\frac{dR}{dz}+\frac{2}{z}\frac{dR}{dz}+\left(\frac{\bha^2k^2+2i\bha\left(\frac{1}{z}-\bhm\right)ks}{\Delta_b^2z^4}-\frac{A}{z^4\Delta_b}\right)R&=&0.
\end{eqnarray*}
Taking into account that $z^2\Delta_b\rightarrow 1$ when $z\rightarrow 0$ we find the indicial equation at $z=0$:
\[\alpha^2-(2s+1)\alpha-A=0.\]
We therefore find the roots $\alpha=s-l$ and $\alpha=s+l+1$. 
\end{enumerate}
We now follow \cite{Kr} to bring this equation into its canonical form. Let
\begin{eqnarray*}
T(r)=R(r)(r-r_+)^{i\frac{\bha k}{r_+-r_-}}(r-r_-)^{-i\frac{\bha k}{r_+-r_-}},\\
u(\uprho)=T(r=\uprho(r_+-r_-)+r_-). 
\end{eqnarray*}
Then $u(\uprho)$ fulfills 
\begin{equation}
\label{hypergeom}
\uprho(\uprho-1)u''(\uprho)+((\alpha+\beta+1)\uprho-\gamma)u'(\uprho)+\alpha\beta u(\uprho)=0
\end{equation}
with 
\begin{equation*}
\alpha=s-l,\, \beta=s+l+1,\, \gamma=s+1+2i\frac{\bha k}{r_+-r_-}.  
\end{equation*}
Let $R(r)$ be an outgoing solution of the Teukolsky equation \eqref{TMER} with $\sigma=0$. From the asymptotic behavior of $R$ at $r_+,\infty$ (see Proposition \ref{prop4.3}), we can read off the asymptotic behavior of $u$
\[ u\sim (\uprho-1)^{-s+2i\frac{\bha k}{r_+-r_-}},\, \uprho\rightarrow 1;\quad\, u\sim \uprho^{-s-l-1},\, \uprho\rightarrow \infty.\] 
The equation \eqref{hypergeom} has two independent smooth solutions. Let $F(\alpha,\beta,\gamma;\uprho)$ be the solution which is analytic in a neighborhood of $0$. Starting with this solution we build up the functions 
\begin{align*}
u_1&=\uprho^{-\alpha}F(\alpha,1+\alpha-\gamma,1+\alpha+\beta-\gamma;1-\frac{1}{\uprho}),\\
u_2&=\uprho^{\beta-\gamma}(1-\uprho)^{\gamma-\alpha-\beta}F(\gamma-\beta,1-\beta,1+\gamma-\alpha-\beta;1-\frac{1}{\uprho})
\end{align*}
which are also solutions to \eqref{hypergeom} and they are analytic in ${\rm Re\, }\uprho>1/2$, see \cite[page 74]{Kr} for details. By \cite[Theorem 5.1]{Kr} the series $F(\alpha,1+\alpha-\gamma,1+\alpha+\beta-\gamma;z)$ and  $F(\gamma-\beta,1-\beta,1+\gamma-\alpha-\beta;z)$ converge at $\vert z\vert=1$. 
Analyzing the asymptotic behavior at $\uprho=1$ one easily sees that both are linearly independent (recall that we suppose  $k\neq 0$).  Therefore $u$ writes on ${\rm Re}\, \uprho>1/2$ as 
\[u=cu_1+du_2.\]  
The asymptotic behavior of $u$ at $\uprho=1$ gives $c=0$. Using the convergence of the series $F(\gamma-\beta,1-\beta,1+\gamma-\alpha-\beta;z)$ at $\vert z\vert=1$ we see that $u_2\sim \uprho^{-(s-l)}$ at $\infty$ which gives $d=0$ and thus $F_{[s]}=0$. By the same argument as in the $\sigma\neq 0$ case we obtain $\hat{\Psi}_{[s]}(0)=0$. 
\end{proof}
\begin{rmk}
The above theorem together with the Fredholm properties of the Teukolsky operator shown in \cite{M2} show that the operators 
\begin{align*}
\hat{T}_s(\sigma):\{\Psi \in \Hbext^{m,q(s)};\, \hat{T}_s(\sigma)\Psi \in \Hbext^{m-1,q(s)+2}\}\rightarrow \Hbext^{m-1,q(s)+2},\\
\hat{T}_s(0):\{\Psi \in \Hbext^{m,q(s)};\, \hat{T}_s(0)\Psi \in \Hbext^{m-1,q(s)+2}\}\rightarrow \Hbext^{m-1,q(s)+2}. 
\end{align*}
are in fact invertible. We however won't need this fact in the following. In the case $\Im\, \sigma>0$ the conditions on $m,\, q(s)$ are a little bit weaker in \cite{M2} than what we require. 
\end{rmk}

\subsection{The scalar wave operator}
For the scalar wave operator which can be considered as a special case of the Teukolsky operator, we will need a mode analysis also in spaces with weaker decay. This is completely analogous to the analysis in \cite{HHV}, we summarize :
\begin{thm}
\label{Thm0}
\begin{enumerate}
\item For $\Im\sigma\geq 0$, $\sigma\neq 0$, the operator 
\begin{equation}
  \label{Eq0}
    \wh{\Box_{g_b,0}}(\sigma)\colon\{u\in\Hbext^{m,q}(X)\colon\wh{\Box_{g_b,0}}(\sigma)u\in\Hbext^{m-1,q+2}(X)\} \to\Hbext^{m-1,q+2}(X)
  \end{equation}
  is invertible when $m>\half$, $q<-\half$, and $m+q>-\half$. 
  \item The stationary operator
  \[
    \wh{\Box_{g_b,0}}(0)\colon\{u\in\Hbext^{m,q}(X)\colon\wh{\Box_{g_b,0}}(0)u\in\Hbext^{m-1,q+2}(X)\} \to\Hbext^{m-1,q+2}(X)
  \]
    is invertible for all $m>\half$ and $q\in(-\tfrac32,-\half)$. 
    \item We have
  \begin{subequations}
  \begin{alignat}{3}
  \label{Eq00Grows0Ker}
    &\ker\wh{\Box_{g_b,0}}(0) \cap \Hbext^{\infty,-3/2-}&=&\ \la u_{b,s 0}\ra, \\
  \label{Eq00Grows0KerAdj}
    &\ker\wh{\Box_{g_b,0}}(0)^* \cap \Hbsupp^{-\infty,-3/2-}&=&\ \la u_{b,s 0}^*\ra,
  \end{alignat}
  \end{subequations}
  where
  \begin{equation}
  \label{Eq00Grows0}
    u_{b,s 0}=1,\quad u_{b,s 0}^*=H(r-r_+).
  \end{equation}
  \item 
  Furthermore, the spaces
  \begin{subequations}
  \begin{alignat}{4}
  \label{Eq00Grows1Ker}
    &\ker\wh{\Box_{g_b,0}}(0) \cap \Hbext^{\infty,-5/2-}& &=\ &\la u_{b,s 0}\ra & \oplus \{ u_{b,s 1}(\scal) \colon \scal\in\scalspace_1 \}, \\
  \label{Eq00Grows1KerAdj}
    &\ker\wh{\Box_{g_b,0}}(0)^* \cap \Hbsupp^{-\infty,-5/2-}& &=\ &\la u_{b,s 0}^*\ra & \oplus \{ u_{b,s 1}^*(\scal) \colon \scal\in\scalspace_1 \}
  \end{alignat}
  \end{subequations}
  are $4$-dimensional. Let $b=(\bhm,\bha)$ and $b_0=(\bhm,0)$. Then we have 
  \begin{align*}
  u_{b_0,s1}(\scal)=(r-\bhm)\scal,\quad u^*_{b_0,s1}= (r-\bhm)H(r-2\bhm)\scal
  \end{align*}
 
  and 
  \begin{align}
  \label{4.21}
  u_{b,s1}-u_{b_0,s1}\in \Hbext^{\infty,-1/2-},\quad u^*_{b,s1}-u^*_{b_0,s1}\in\Hbsupp^{-\infty,-1/2-}. 
  \end{align}
\end{enumerate}
\end{thm}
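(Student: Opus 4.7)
The plan is to specialize the Teukolsky analysis of Section \ref{Sec4} to spin $s=0$ and combine it with a normal operator expansion at $\pa_+X$ in the spirit of \cite{HHV}.

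For parts (1) and (2) I observe that the scalar wave operator coincides with the Teukolsky operator $\hat T_0(\sigma)$, so $q(s)=q$. The Fredholm framework of \cite{VasyMicroKerrdS, Va1, Va2}, implemented in the Kerr setting in \cite{HHV, M2}, yields that the stated maps are Fredholm of index $0$ between the prescribed $b$-Sobolev spaces; triviality of their kernels is then Theorem \ref{Thm3.1} at $s=0$, so invertibility follows immediately.

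For part (3), the inclusions $u_{b,s0}=1\in\ker\wh{\Box_{g_b,0}}(0)$ and $u^*_{b,s0}=H(r-r_+)\in\ker\wh{\Box_{g_b,0}}(0)^*$ are immediate (the former from $\Box_{g_b}1=0$, the latter because $\{r=r_+\}$ is a null hypersurface so that the distributional derivative of the Heaviside is killed by the degeneracy of the stationary operator along this surface). For uniqueness, given $u\in\ker\wh{\Box_{g_b,0}}(0)\cap\Hbext^{\infty,-3/2-}$, I run the partial polyhomogeneous expansion used in the proof of Proposition \ref{prop3.1}(1). By Lemma \ref{LemmaKStNormal} the normal operator at $\pa_+X$ differs from the flat Euclidean Laplacian by $\rho^3\Diffb^2$, so on each spherical harmonic sector $\scalspace_l$ the indicial roots are $\lambda=il$ and $\lambda=-i(l+1)$, corresponding to exponents $\rho^{-l}$ and $\rho^{l+1}$. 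Since $\rho^{-l}\in\Hbext^{\infty,-3/2-}$ requires $l=0$, subtracting $c\cdot 1$ for the correct constant $c$ produces a residual in $\ker\cap\Hbext^{\infty,-1/2-}$, which vanishes by part (2). The adjoint statement is handled the same way using the formal self-adjointness of $\Box_{g_b,0}$.

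For part (4), the key new step is existence of $u_{b,s1}(\scal)$ on the full Kerr background. Fixing $\scal\in\scalspace_1$ and a cutoff $\chi$ supported near $\pa_+X$, one has $\wh{\Box_{g_b,0}}(0)\bigl(\chi(r-\bhm)\scal\bigr)=:-f$ with $f\in\Hbext^{\infty,1/2-}$, since $(r-\bhm)\scal$ lies in the kernel of the Schwarzschild stationary wave operator on the $l=1$ sector and the remaining mismatch sits in $\rho^3\Diffb^2$. Part (2) then provides $v\in\Hbext^{\infty,-1/2-}$ solving $\wh{\Box_{g_b,0}}(0)v=f$, and $u_{b,s1}(\scal):=\chi(r-\bhm)\scal+v$ satisfies \eqref{4.21} by construction. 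For uniqueness at weight $-5/2-$ the same expansion admits growing leading behavior only for $l\in\{0,1\}$, so subtracting the matched combination $c\cdot u_{b,s0}+\sum_\scal u_{b,s1}(\scal)$ returns one to $\ker\cap\Hbext^{\infty,-3/2-}=\la u_{b,s0}\ra$. The adjoint statement follows analogously, with Schwarzschild seed $u^*_{b_0,s1}(\scal)=(r-\bhm)H(r-2\bhm)\scal$. I expect the main obstacle to be ensuring that the polyhomogeneous expansion closes on each spherical harmonic sector despite the $\bha$-dependent off-diagonal coupling of the full Kerr stationary operator; this is resolved by Lemma \ref{LemmaKStNormal}, which places all coupling terms in $\rho^3\Diffb^2$, yielding uniform-in-$l$ Mellin contour-shift estimates exactly as in Proposition \ref{prop3.1}(1), and allowing the component-wise expansions to be reassembled in the ambient weighted $b$-Sobolev space.
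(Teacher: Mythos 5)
Your overall strategy coincides with the paper's: parts (1)--(2) are obtained by combining the Fredholm framework of \cite{Va1,Va2,M2} with the $s=0$ case of Theorem \ref{Thm3.1}, and parts (3)--(4) by normal-operator/indicial-root expansions at $\pa_+X$ in the style of \cite[Proposition 6.2]{HHV}, with $u_{b,s1}(\scal)$ built by correcting the Schwarzschild seed $(r-\bhm)\scal$. Two remarks on parts (1)--(2): the paper has to do slightly more than you acknowledge, namely (i) a small functional-analytic lemma (Lemma \ref{Fredholm}) to transfer Fredholmness from the resolved scattering-b spaces of \cite{Va2} to the spaces in the statement, and (ii) an explicit index-zero argument, obtained by decomposing into azimuthal modes and continuously deforming $\wh{\Box_{g_b,0}}(0)$ to the Schwarzschild operator through perturbations in $\rho^2\Diffb^1$ (which preserve the domain); index zero is not automatic from the Fredholm framework alone. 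These are standard but should be stated.

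There is one concrete gap, in your construction of $u_{b,s1}(\scal)$. You estimate the error $f=-\wh{\Box_{g_b,0}}(0)(\chi(r-\bhm)\scal)$ using the normal-operator comparison $\wh{\Box_{g_b,0}}(0)-\wh{\Box_{\ubar g,0}}(0)\in\rho^3\Diffb^2$ of Lemma \ref{LemmaKStNormal}, which only gives $f\in\Hbext^{\infty,1/2-}$. But the invertibility statement of part (2) maps $\Hbext^{m,q}\to\Hbext^{m-1,q+2}$ only for $q\in(-\tfrac32,-\tfrac12)$, so producing a corrector $v\in\Hbext^{\infty,-1/2-}$ requires $f\in\Hbext^{\infty,3/2-}$; with $f$ only in $\Hbext^{\infty,1/2-}$ you would land at weight $q=-3/2-$, where the kernel and cokernel are nontrivial, and you would obtain at best $u_{b,s1}-u_{b_0,s1}\in\Hbext^{\infty,-3/2-}$, which is weaker than \eqref{4.21}. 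The fix is to use the sharper comparison between Kerr and Schwarzschild of the \emph{same mass}: by \eqref{EqKStMetrics2} (equivalently \cite[(3.44)]{HHV}) one has $\wh{\Box_{g_b,0}}(0)-\wh{\Box_{g_{b_0},0}}(0)\in\rho^4\Diffb^2$, whence $f\in\rho^4\Diffb^2\cdot\Hbext^{\infty,-5/2-}+\Hbext^{\infty,\infty}=\Hbext^{\infty,3/2-}$, and part (2) then applies exactly as you intend. This is precisely the point of seeding with the Schwarzschild solution rather than the Minkowski one, and it is where the paper's bookkeeping differs from yours.
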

Before proving the theorem we make the following observation on Fredholm operators. 
\begin{lemma}
\label{Fredholm}
Let $X,Y,Z$ be Banach spaces and suppose that $Z$ is continuously embedded in $Y$. Suppose furthermore that $P:X\rightarrow Y$ is a Fredholm operator. Then $P_Z:P^{-1}(Z)\rightarrow Z$ is a Fredholm operator.   
\end{lemma}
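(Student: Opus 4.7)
The plan is to equip the domain $P^{-1}(Z) \subset X$ with the graph norm
\[
\|x\|_{P^{-1}(Z)} := \|x\|_X + \|Px\|_Z,
\]
which makes it into a Banach space on which $P_Z$ is bounded; completeness follows from completeness of $X$ and $Z$ together with continuity of $P : X \to Y$ and of the embedding $\iota : Z \hookrightarrow Y$. On this space I will then verify the three defining properties of a Fredholm operator for $P_Z$ in turn.

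For the kernel, since $0 \in Z$, one has $\ker P_Z = \ker P \cap P^{-1}(Z) = \ker P$, which is finite dimensional by hypothesis. For the range, note $\operatorname{ran}(P_Z) = \operatorname{ran}(P) \cap Z = \iota^{-1}(\operatorname{ran}(P))$; since $\operatorname{ran}(P)$ is closed in $Y$ and $\iota$ is continuous, this preimage is closed in $Z$. Finally, the embedding $\iota$ descends to a well-defined linear map $\bar{\iota} : Z/\operatorname{ran}(P_Z) \to Y/\operatorname{ran}(P)$ which is injective, since any $z \in Z$ with $[z] = 0$ in $Y/\operatorname{ran}(P)$ lies in $Z \cap \operatorname{ran}(P) = \operatorname{ran}(P_Z)$; hence $\operatorname{coker}(P_Z)$ embeds into the finite-dimensional space $\operatorname{coker}(P)$.

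The only analytic subtlety is the completeness of $P^{-1}(Z)$ in the graph norm, which relies on the continuity of $\iota$ to identify the $Y$-limit of a sequence $(Px_n)$ with an element of $Z$ when $(Px_n)$ is Cauchy in $Z$ and $(x_n)$ is Cauchy in $X$. Apart from that point, the argument reduces to elementary manipulations of kernels and quotients transported along the continuous inclusion $Z \hookrightarrow Y$, and no genuine obstacle arises.
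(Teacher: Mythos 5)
Your proof is correct and follows essentially the same route as the paper's, which likewise observes that $\ker P_Z=\ker P$, that $Z\cap P(X)$ is closed in $Z$ because $P(X)$ is closed in $Y$ and the embedding is continuous, and that $Z/(Z\cap P(X))$ injects into $Y/P(X)$. The only addition is your explicit verification that $P^{-1}(Z)$ with the graph norm is complete, a point the paper leaves implicit; this is a worthwhile detail but not a different argument.
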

\begin{proof}
The lemma follows from the fact that $\Ker P_Z\subset \Ker P,\, Z/(Z\cap P(X))\subset Y/P(X)$ and the fact that $Z\cap P(X)$ is closed in $Z$ because $P(X)$ is closed in $Y$.   
\end{proof}
\begin{proof}[Proof of the Theorem]
We first show that the operators in $(1),(2)$ are Fredholm operators. For $(2)$ this follows directly from the results of \cite{M2} which uses \cite{Va1}, \cite{Va2}. The proof follows the general scheme in the proof of 
\cite[Theorem 4.3]{HHV} 
and uses that the general setting of \cite{Va2} applies to the wave equation on the Kerr metric for all subextreme values of $\bha$ as well as the fact that the principal features of the Hamiltonian flow of the classical symbol are the same for all subextreme values of $\bha$ (see also the proof of our theorem \ref{ThmOp}). Concerning $(1)$ the spaces used in \cite{HHV}, \cite{M2}  are slightly different, we therefore have to argue that we can also use the spaces in the above theorem. By the results of \cite{Va1}, \cite{Va2} as well as \cite{M2} we know that  
\begin{align*}
\wh{\Box}_{g_b,0}(\sigma):\left\{u\in \bar{H}^{m,m+q,q}_{sc,\bop,res}(X);\wh{\Box}_{g_b,0}(\sigma)u\in\bar{H}_{sc,\bop,res}^{m-2,m+q+1,q+1}(X)\right\}\rightarrow \bar{H}^{m-2,m+q+1,q+1}_{sc,\bop,res}(X)
\end{align*}
are Fredholm operators. Here $\bar{H}^{m,r,q}_{sc,\bop,res}(X)$ are scattering-b Sobolev spaces as defined in \cite[Section 3]{Va2}. We have in particular
\begin{align*}
\bar{H}_{sc,\bop,res}^{m,m+q,q}(X)=\Hbext^{m,q}(X).
\end{align*} 

We have 
\begin{align*}
Z:=\Hbext^{m-1,q+2}=\bar{H}_{sc,\bop,res}^{m-1,m+q+1,q+2}\subset\bar{H}^{m-2,m+q+1,q+1}_{sc,\bop,res}=:Y
\end{align*} 
with continuous embedding. We then can apply Lemma \ref{Fredholm} with 
\begin{align*}
X:=\left\{u\in \bar{H}^{m,m+q,q}_{sc,\bop,res}(X);\wh{\Box}_{g_b,0}(\sigma)u\in\bar{H}_{sc,\bop,res}^{m-2,m+q+1,q+1}(X)\right\}.
\end{align*}
We now argue that the operators have index zero. We first define the spaces 
\begin{align*}
\Hbext^{m,q;k}=\{u\in \Hbext^{m,q};\, u=e^{ik\varphi_*}\tilde{u}(r,\theta)\}. 
\end{align*}
We start with $\sigma=0$. The restriction of $\wh{\Box_{g_b,0}}(0)$ to $\Hbext^{m,q;k}$ 
\begin{align*}
    \wh{\Box_{g_b,0}}^k(0)\colon\{u\in\Hbext^{m,q;k}(X)\colon\wh{\Box_{g_b,0}}(\sigma)u\in\Hbext^{m-1,q+2;k}(X)\} \to\Hbext^{m-1,q+2;k}(X)
\end{align*}
is also Fredholm. We have 
\begin{align*}
\ker\wh{\Box_{g_b,0}}(0)=\left\la \bigcup_{k\in \{-N,...,N\}} \ker \wh{\Box_{g_b,0}}^k(0)\right\ra
\end{align*}
as well as an equivalent equality for the adjoint. Note that the union is finite here because the kernel is finite dimensional. It is therefore sufficient to show the index zero property for $\wh{\Box_{g_b,0}}^k(0)$ for all $k\in \{-N,...,N\}$. Now 
\begin{align*}
\wh{\Box_{g_b,0}}^k(0)=\wh{\Box_{g_{b_0},0}}^k(0)+P_k,\quad P_k\in \rho^2 {\rm Diff}^1_b. 
\end{align*}
As adding an element of $\rho^2 {\rm Diff}^1_b$ does not change the domain, we can continuously deform the operator $\wh{\Box_{g_b,0}}^k(0)$ on Kerr to the corresponding operator on Schwarzschild for which we know from \cite[Theorem 6.1]{HHV} that it is invertible. Note in this context that we can work on the same manifold for all angular momenta per unit mass $0\le {\mathfrak a}'\le {\mathfrak a}$ because the condition \eqref{eqr0} entails that the same condition holds with $\bha$ replaced by $\bha'$. A similar argument shows that $\wh{\Box_{g_b}}(\sigma)$ has Fredholm index zero for $\sigma\neq 0$. Now by theorem \ref{Thm3.1} we know that the kernels of both operators are equal to $\{0\}$, they are thus invertible. 

The proof of (3)-(4) is strictly analogous to the proof of \cite[Proposition 6.2]{HHV}. Nevertheless to show in addition \eqref{4.21} we construct the solution $u_{b,s1}(\scal)$ starting with the corresponding solutions $u_{b_0,s1}(\scal)$ for Schwarzschild rather than the one for Minkowski like in \cite{HHV}. We construct the solutions $u_{b,s1}(S)$, the argument for $u^*_{b,s1}(\scal)$ is analogous. Let $v:=(r-\bhm)\scal\in \Hbext^{\infty,-5/2-}$ and fix a cutoff $\chi\in C^{\infty}(\R)$ with $\chi=0$ for $r\le 3\bhm,\, \chi=1$ for $r\ge 4\bhm$. Then 
\begin{align*}
e:=\wh{\Box_{g_b}}(0)(\chi v)&=\chi\wh{\Box_{g_{b_0}}}(0)(v)+[\wh{\Box_{g_{b_0}}}(0),\chi]v+(\wh{\Box_{g_b}}(0)-\wh{\Box_{g_{b_0}}}(0))(\chi v)\\
&\in0+\Hbext^{\infty,\infty}+\Hbext^{\infty,3/2-}=\Hbext^{\infty,3/2-}.
\end{align*} 
In the last step we have used that $\wh{\Box_{g_b}}(0)-\wh{\Box_{g_{b_0}}}(0)\in \rho^4{\rm Diff}^2_b$, see \cite[(3.44)]{HHV}. Now $\wh{\Box_{g_b}}(0)w=-e$ can be solved by $w\in \Hbext^{\infty,-1/2-}$; indeed $e$ is orthogonal to the kernel of $\wh{\Box_{g_b}}^*(0)$ in $\Hsupp^{-\infty,-3/2+}$ which is trivial by (2). We then have $u_{b,s1}=\chi v+w$ and it fulfills \eqref{4.21}. 
\end{proof}

\section{The $1-$form wave operator}
\label{Sec1form}
We now analyze mode solutions of the $1-$form wave operator. 
\subsection{Mode solutions}

\begin{thm}
\label{Thm1}
  Consider $\Box_{g_b,1}$ acting on $1-$forms. There exists $m_1>0$ with the following property.  
  \begin{enumerate}
  \item For $\Im\sigma\geq 0$, $\sigma\neq 0$, the operator
    \[
      \wh{\Box_{g_b,1}}(\sigma)\colon\left\{\omega\in \Hbext^{m,q}(X;\wt\Tsc{}^*X)\colon\wh{\Box_{g_b,1}}(\sigma)\omega\in\Hbext^{m-1,q+2}(X;\wt\Tsc{}^*X)\right\} \to\Hbext^{m-1,q+2}(X;\wt\Tsc{}^*X)
    \]
    is invertible when $m>m_1$, $q<-\half$, $m+q>-\half$.
  \item For $m>m_1$ and $q\in(-\tfrac32,-\half)$, the stationary operator
    \begin{equation}
    \label{Eq1StatOp}
    \begin{split}
      \wh{\Box_{g_b,1}}(0)\colon&\left\{\omega\in\Hbext^{m,q}(X;\wt\Tsc{}^*X)\colon\wh{\Box_{g_b,1}}(0)\omega\in\Hbext^{m-1,q+2}(X;\wt\Tsc{}^*X)\right\} \\
        &\qquad\to\Hbext^{m-1,q+2}(X;\wt\Tsc{}^*X)
    \end{split}
    \end{equation}
    has 1-dimensional kernel and cokernel. We have 
    \begin{align*}
    &{\rm ker}\, \wh{\Box_{g_b,1}}(0)\cap \Hbext^{\infty,-\half-}=\la\omega_{b,s_0}\ra,\\
    &{\rm ker}\, \wh{\Box_{g_b,1}}(0)^*\cap\Hsupp^{-\infty,-\half-}=\la \omega_{b,s_0}^*\ra
    \end{align*}
    with
    \begin{eqnarray*}
    \omega_{b,s_0}&=&\left\{\begin{array}{c} \frac{r}{\varrho_b^2}(dt_*-a\sin^2\theta d\varphi_*)+\frac{r_+-r}{\Delta_b} dr,\, r\le 3,\bhm\\
    \frac{r}{\varrho_b^2}(dt_*-a\sin^2\theta d\varphi_*)+\left(\frac{(r^2+a^2)r+\varrho_b^2r_+}{\varrho_b^2\Delta_b}\right)dr,\, r\ge 4\bhm,\end{array}\right.\\
    \quad \omega_{b,s_0}^*&=&\delta(r-r_+)dr. 
    \end{eqnarray*}
  \end{enumerate}
  \end{thm}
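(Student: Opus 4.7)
The Fredholm property of both operators on the stated function spaces will follow from the microlocal framework of \cite{Va1,Va2,M2} applied to 1-forms as in \cite{HHV}; the requisite elliptic, principal-type, radial point and second-microlocal estimates all carry over to the full subextreme range of $\bha$ as discussed in the introduction. Fredholm index zero will follow by the continuous deformation argument of Theorem~\ref{Thm0}, using that $\wh{\Box_{g_b,1}}(\sigma)-\wh{\Box_{g_{b_0},1}}(\sigma)\in\rho^2\Diffb^1$ preserves the domain. The substance of the theorem is therefore the identification of the kernel and cokernel.

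The strategy for the kernel is to reduce to the Teukolsky equation. Given $\omega\in\ker\wh{\Box_{g_b,1}}(\sigma)$, Ricci-flatness of Kerr ensures $[\delta_{g_b},\Box_{g_b}]=0$, so $u:=\delta_{g_b}\omega$ is a scalar mode of $\wh{\Box_{g_b,0}}(\sigma)$. The weight of $u$ is improved by one over that of $\omega$, so either Theorem~\ref{Thm0}(1) (for $\sigma\neq 0$) or Theorem~\ref{Thm0}(3) together with the weight improvement (for $\sigma=0$, since the only zero mode in $\Hbext^{\infty,-3/2-}$ is the constant, which cannot lie in a space of strictly better decay) forces $u=0$. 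Consequently $F:=d\omega$ is closed and divergence-free, i.e.\ a Maxwell mode. Its extreme Newman--Penrose scalars $\phi_{\pm 1}$, computed in the normalized tetrad of Section~\ref{Sec4.3} with the $\bhp^{-2s}$ rescaling for $s<0$, will be shown to satisfy $\hat T_{\pm 1}(\sigma)\hat\phi_{\pm 1}=0$ with $\hat\phi_{\pm 1}$ in the hypotheses of Theorem~\ref{Thm3.1}. That theorem then gives $\phi_{\pm 1}=0$, which algebraically reduces $F$ to a scalar multiple of a Coulomb-type Maxwell field on Kerr.

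For $\sigma\neq 0$, the Coulomb family is stationary and supports no nonzero frequency, so $F=0$. Writing $\omega=e^{-i\sigma t_*}\tilde\omega$ with respect to the splitting $\wt\Tsc{}^*X=\R\,dt_*\oplus\Tsc^*X$ and setting $\tilde\phi:=-\tilde\omega_{t_*}/(i\sigma)$, the identity $d\omega=0$ gives $\omega=d(e^{-i\sigma t_*}\tilde\phi)$ with $\tilde\phi$ in the same scalar space as the components of $\tilde\omega$; then $\wh{\Box_{g_b,0}}(\sigma)\tilde\phi=0$ and Theorem~\ref{Thm0} force $\tilde\phi=0$, hence $\omega=0$. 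For $\sigma=0$, the Coulomb potential $\omega_{b,s_0}$ survives: a direct calculation in the two explicit coordinate patches (chosen smooth across the horizon in $(t_*,\varphi_*)$) shows $\omega_{b,s_0}\in\Hbext^{\infty,-1/2-}$ and $\wh{\Box_{g_b,1}}(0)\omega_{b,s_0}=0$; uniqueness modulo scaling reduces, via the stationary Poincar\'e construction above, to Theorem~\ref{Thm0} (the only scalar zero mode in our class is the constant, which contributes a trivial exact 1-form).

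The cokernel element $\omega_{b,s_0}^*=\delta(r-r_+)\,dr$ will be shown to annihilate the formal adjoint of \eqref{Eq1StatOp} by a direct distributional computation in horizon-regular coordinates, in the same spirit as the scalar case $u_{b,s 0}^*=H(r-r_+)$ in Theorem~\ref{Thm0}. This, combined with Fredholm index zero, completes the one-dimensional cokernel identification. The main technical obstacle throughout is the uniform bookkeeping of weights, bundle and tetrad identifications needed to ensure that $\delta_{g_b}\omega$ and $\phi_{\pm 1}$ land precisely in the hypotheses of Theorems~\ref{Thm0} and~\ref{Thm3.1}, and that the Poincar\'e-type primitive $\tilde\phi$ inherits enough decay to qualify as a scalar mode.
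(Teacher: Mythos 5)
Your proposal is correct and follows essentially the same route as the paper: Fredholm index zero by deformation to the scalar case, vanishing of $\delta_{g_b}\omega$ via the scalar mode result, reduction of $F=d\omega$ to a Maxwell field whose extreme scalars vanish by Theorem~\ref{Thm3.1}, the Coulomb solution at $\sigma=0$, a Poincar\'e-type primitive killed by Theorem~\ref{Thm0} at $\sigma\neq 0$, and the distributional cokernel element. The only places where the paper is more explicit than your sketch are the elimination of $\phi_0$ for $\sigma\neq 0$ (via the algebraic relation $\tfrac{iK}{\Delta}\Phi_0=0$ from the remaining Maxwell equations once $\phi_{\pm 1}=0$) and the cokernel verification, which is done cleanly by writing $\omega_{b,s_0}^*=d\bigl(H(r-r_+)\bigr)$ and commuting $d$ past the adjoint scalar operator rather than by a direct distributional computation.
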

  \begin{proof}
As in the small $\bha$ case the operators $\hat{\Box}_{g_b,1}(\sigma)$ are Fredholm operators of index $0$. {We} revisit the argument given in \cite{HHV} for the more complicated case of the gauge fixed Einstein equation in the proof of Theorem \ref{ThmOp}. We now want to compute the different kernels. As $g$ is Ricci flat we have 
\begin{equation}
\label{1M1}
\Box_{g_b,1}=(d+\delta_{g_b})^2=d\delta_{g_b}+\delta_{g_b}d.
\end{equation}
Let $\omega=e^{-i\sigma t_*}h$ be a mode solution :
\[\Box_{g_b,1}\omega=0.\]
Then 
\begin{equation*}
0=\delta_{g_b}\Box_{g_b,1}\omega=\delta_{g_b}d\delta_{g_b}\omega=\Box_{g_b,0}\delta_{g_b}\omega=0.
\end{equation*} 
Here we have used that $\delta_{g_b}^2=0$. Now let us note that 
\[\delta_{g_b}\omega=e^{-i\sigma t_*}f, \, f\in  \Hbext^{m-1,q}(X).\]
We can therefore apply Theorem \ref{Thm3.1} for $s=0$ to obtain: 
\begin{equation}
\label{1M2}
\delta_{g_b}\omega=0.
\end{equation}
Putting this into the wave equation we find :
\begin{equation}
\label{1M3}
\delta_{g_b}d\omega =0.
\end{equation}
Let $F=d\omega$. Then $F$ is a Maxwell field 
\begin{equation}
\label{Maxwelldomega}
dF=0,\quad \delta_{g_b} F =0. 
\end{equation}
We now proceed as in Section \ref{SecTeu} and build up the scalars $\hat{\Psi}_{[s]}\in \Hbext^{m-1,q(s)}$. By Theorem \ref{Thm3.1} we obtain that $\hat{\Psi}_{[\pm 1]}$ and thus $\Phi_{\pm 1}$ are zero. We now go back to Boyer-Lindquist coordinates $(t,r,\theta,\varphi)$.  We distinguish two cases :

{\bf 1st case} : $\sigma\neq 0$. Using the first and the third equation in \cite[Chapter 8 (11)]{Ch} we obtain 
\[\frac{iK}{\Delta}\Phi_0=0,\] 
with $K=(r^2+a^2)\sigma+am$ and $m$ is the $\varphi$ mode in the separation of variables.\footnote{Note that $\Phi_0=\phi_1$ in the notations of Chandrasekhar.} Thus $\Phi_0=0$. This means that the whole Maxwell field $F$ is zero :
\begin{equation}
\label{1M4}
d\omega=0.
\end{equation}
Recall that $\cX=(r_+,\infty)\times\Sph^2$. We write 
\begin{equation*}
\omega=e^{-i\sigma t}(h_T+ h_Ndt), 
\end{equation*}
where $h_T$ is a $1-$form on $\cX$. 
Then $d\omega=0$  is equivalent to :
\begin{equation}
\label{1M5}
\left.\begin{array}{rcl}
d_{\cX}h_T&=&0,\\
-i\sigma h_T-d_{\cX} h_N&=&0. \end{array}\right\}
\end{equation}
By Poincar\'e's lemma we have $h_T=d_{\cX}\tilde{A}$. Now observe that 
\begin{align*}
d_{\cX}(i\sigma^{-1}h_N-\tilde{A})=h_T-h_T=0
\end{align*}
and thus $h_N=-i\sigma(\tilde{A}+c)$ for some constant $c$. It follows that $\omega=d (e^{-i\sigma t}(\tilde{A}+c))$. We now put $A=e^{-i\sigma t}(\tilde{A}+c)$.  Note that we could also apply Poincar\'e's lemma directly on spacetime, but we have to make sure that the potential $A$ is a mode solution. 
Putting this now into \eqref{1M2} we find that 
\begin{equation}
\label{1M6}
\Box_{g_b,0}A=0.
\end{equation}
Now $\tilde{A}+c=i\sigma^{-1}h_N\in \Hbext^{m-1,q}$. We can therefore apply Theorem \ref{Thm0} to conclude that $A=0$ and thus 
\begin{equation}
\label{1M7}
\omega=0.
\end{equation}

{\bf 2nd case }: $\sigma=0$. If $m\neq 0$ we obtain by the same argument $\Phi_0=0$. If $m=0$, the equations in \cite[Chapter 8 (11)]{Ch} give :
\begin{eqnarray}
\label{Coulomb1}
\partial_r\Phi_0&=&-\frac{2}{r-ia\cos \theta}\Phi_0,\\
\label{Coulomb2}
\partial_{\theta}\Phi_0&=&-\frac{2ia\sin \theta}{r-ia\cos\theta}\Phi_0.
\end{eqnarray}
 Integrating \eqref{Coulomb1} we find $\Phi_0=\frac{C(\theta)}{(r-ia\cos \theta)^2}$. Putting this into \eqref{Coulomb2} we find $C(\theta)=const.$ It follows that $F_{\mu\nu}$ is a Coulomb solution:
 \[F_{\mu\nu}=4({\rm Re} \Phi_0\, n_{[\mu}l_{\nu]}+i\Im\Phi_0\,m_{[\mu}\bar{m}_{\nu]}).\]

 $\omega$ is a potential for the Coulomb solution and it fulfills the Lorenz gauge \eqref{1M2}. Therefore $\omega=C(\omega_0+d\tilde{f})$ with 
 \[
    \omega_0= \frac{r}{\varrho_b^2}(d t-a\sin^2\theta\,d\varphi).
  \]
  We will suppose $C=1$ in the following. The $1-$form $\omega_0$ is singular at the horizon, we therefore have to correct this behavior by a gauge term. Concretely we have for $r\le 3\bhm$:
  \begin{align*}
  \omega_0=\frac{r}{\varrho_b^2}(dt_*-a\sin^2\theta d\varphi_*)-\frac{r}{\Delta_b}dr
  \end{align*}
  and for $r\ge 4\bhm$ 
  \begin{align}
   \omega_0=\frac{r}{\varrho_b^2}(dt_*-a\sin^2\theta d\varphi_*)+\frac{(r^2+a^2)r}{\varrho_b^2\Delta_b}dr.
   \end{align}
 Therefore we put
 \begin{align*}
 \omega_{b,s_0}=\omega_0+\frac{r_+}{\Delta_b}dr.
 \end{align*}  
 Note that $\omega_{b,s_0}\in \Hbext^{\infty,q}$. An explicit calculation gives
 \begin{align*}
 \Box_{g_b,1}\omega_{b,s_0}=0.
 \end{align*}
 By the same argument as before we find 
 \begin{align*}
 \delta_{g_b}\omega_{b,s_0}=0.
 \end{align*}
 Let now $f=\tilde{f}-\int\frac{r_+}{\Delta_b}$. We find
 \begin{align*}
 \Box_{g_b,0} f&=\delta_{g_b}d\tilde{f}-\delta_{g_b}\frac{r_+}{\Delta_b}dr=\delta_{g_b}\omega-\delta_{g_b}\omega_b=0. 
 \end{align*}
 By Theorem \ref{Thm0} $f=const$. 
 It remains to show that $\omega_{b}^*$ is in the cokernel.  We first observe that $\omega^*_{b,s_0}=d (H(r-r_+))$ and then compute 
 \begin{equation*}
 \hat{\Box}^*_{g_b,1}(0)d (H(r-r_+))=d\hat{\Box}^*_{g_b,0}(0) H(r-r_+)=0.
 \end{equation*}   
 \end{proof}
 \subsection{Growing modes}
 \begin{prop}
\label{Prop10Grow}
  We have
  \begin{subequations}
  \begin{alignat}{3}
  \label{Eq10Grows0Ker}
    &\ker\wh{\Box_{g_b,1}}(0) \cap \Hbext^{\infty,-3/2-}& =&\ \la\omega_{b,s 0}\ra \oplus \la\omega_{b,s 0}^{(0)}\ra \oplus \{\omega_{b,s 1}(\scal) \colon \scal\in\scalspace_1\}, \\
  \label{Eq10Grows0KerAdj}
    &\ker\wh{\Box_{g_b,1}}(0)^* \cap \Hbsupp^{-\infty,-3/2-}& =&\ \la\omega_{b,s 0}^*\ra \oplus \{\omega_{b,s 1}^*(\scal) \colon \scal\in\scalspace_1\},
  \end{alignat}
  \end{subequations}
  where, with $\flat$ denoting the musical isomorphism $V^\flat:=g_b(V,-)$, and using~\eqref{Eq00Grows1Ker}--\eqref{Eq00Grows1KerAdj},
  \begin{alignat}{3}
  \label{Eq10Grows0}
    &\omega_{b,s 0}^{(0)}=\pa_t^\flat,&& \\
  \label{Eq10Grows1}
    &\omega_{b,s 1}(\scal)=d u_{b,s 1}(\scal),&\quad
    &\omega_{b,s 1}^*(\scal)=d u_{b,s 1}^*(\scal).
  \end{alignat}
\end{prop}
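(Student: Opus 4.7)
The plan is to verify directly that the listed elements lie in $\ker\wh{\Box_{g_b,1}}(0)$ (respectively $\ker\wh{\Box_{g_b,1}}(0)^*$) in the claimed weighted spaces, to prove their linear independence, and then to rule out any further elements via a normal-operator and indicial-root analysis at $\partial_+X$ capturing the new asymptotic modes crossed as the weight decreases from $-1/2-$ to $-3/2-$.

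For membership: $\omega_{b,s_0}$ is already handled by Theorem \ref{Thm1}(2). For $\omega_{b,s_0}^{(0)}=\partial_t^\flat$, since $\partial_t$ is a Killing vector on the Ricci-flat Kerr background, the standard identity $\Box_g V^\flat=0$ for Killing vectors on Ricci-flat metrics gives $\wh{\Box_{g_b,1}}(0)\partial_t^\flat=0$; a direct computation from \eqref{EqKaMetric} yields $\partial_t^\flat=(1-2\bhm r/\varrho_b^2)\,dt+(2\bhm\bha r\sin^2\theta/\varrho_b^2)\,d\varphi$, which is bounded in the scattering frame (after passing to the $(t_*,\varphi_*)$ coordinates) and lies in $\Hbext^{\infty,-3/2-}\setminus\Hbext^{\infty,-1/2-}$. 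For $\omega_{b,s_1}(\scal)=du_{b,s_1}(\scal)$, Ricci-flatness gives $\Box_{g,1}=d\delta_g+\delta_g d$ on 1-forms and hence the commutation $\Box_{g,1}\circ d=d\circ\Box_{g,0}$ on scalars; applied to $u_{b,s_1}(\scal)\in\ker\wh{\Box_{g_b,0}}(0)$ from Theorem \ref{Thm0}(4) this gives $\wh{\Box_{g_b,1}}(0)\omega_{b,s_1}(\scal)=0$, and the same commutation applied to $u_{b,s_1}^*(\scal)$ yields $\omega_{b,s_1}^*(\scal)\in\ker\wh{\Box_{g_b,1}}(0)^*$. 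Linear independence in both kernel and cokernel is visible from the distinct leading asymptotics at $\partial_+X$.

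To establish completeness of \eqref{Eq10Grows0Ker}, let $\omega\in\ker\wh{\Box_{g_b,1}}(0)\cap\Hbext^{\infty,-3/2-}$. By Lemma \ref{LemmaKStNormal}, the normal operator of $\wh{\Box_{g_b,1}}(0)$ at $\partial_+X$ is $-\Delta_{\R^3}$ acting component-wise on the four components of $\wt\Tsc{}^*X$ in the standard scattering trivialization. The indicial roots of $-\Delta_{\R^3}$ on $\R^3$ are $\alpha\in\{l,-l-1:l\in\N_0\}$, with leading behavior $r^\alpha$ corresponding to b-weight $q=-\alpha-3/2$; between the weights $-1/2-$ and $-3/2-$ only the root $\alpha=0$ ($l=0$) is crossed, contributing a 4-dimensional space of constant leading modes, one per bundle component. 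The four mode solutions $\partial_t^\flat$ and $\omega_{b,s_1}(\scal_i)$ for $\scal_i\in\scalspace_1$ dual to Cartesian coordinates (so that $u_{b_0,s_1}(\scal_i)=(r-\bhm)\scal_i$ has leading asymptotic $x^i$) have leading asymptotics that span this 4-dimensional space. Running the Mellin/contour-shift scheme analogous to Proposition \ref{prop3.1} on each bundle component, with the Kerr perturbation $\wh{\Box_{g_b,1}}(0)-\wh{\Box_{\ubar g,1}}(0)\in\rho^3\Diffb^2$ absorbed iteratively as a lower-order error, produces constants $c_0,c_i\in\C$ such that
\[
\omega-c_0\,\partial_t^\flat-\sum_{i=1}^3 c_i\,\omega_{b,s_1}(\scal_i)\in\ker\wh{\Box_{g_b,1}}(0)\cap\Hbext^{\infty,-1/2-}=\la\omega_{b,s_0}\ra
\]
by Theorem \ref{Thm1}(2), establishing \eqref{Eq10Grows0Ker}. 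The cokernel identity \eqref{Eq10Grows0KerAdj} follows by the analogous scheme on the supported-dual side; only the three spatial indicial modes admit supported-dual counterparts (the absent counterpart to $\partial_t^\flat$ reflecting the asymmetric boundary pairing between extendible and supported formulations at $\partial_+X$), giving the stated 4-dimensional cokernel.

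The principal technical obstacle is the uniform extraction of the leading-order polyhomogeneous expansion across the rank-four bundle, since the components couple under the Kerr perturbation of the Minkowski normal operator; since this perturbation lies in $\rho^3\Diffb^2$ by Lemma \ref{LemmaKStNormal}, it contributes only strictly lower-order errors and the iterative Mellin/residue argument applies at each stage without altering the indicial structure at leading order.
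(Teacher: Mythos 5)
Your treatment of the forward kernel \eqref{Eq10Grows0Ker} is essentially the paper's argument: membership via the commutation $\wh{\Box_{g_b,1}}(0)\,d=d\,\wh{\Box_{g_b,0}}(0)$ (your alternative route to $\pa_t^\flat\in\ker$ via the Killing identity $\Box_g V^\flat=-\Ric(V^\flat)=0$ is also fine), followed by a normal operator argument at $\pa_+X$ showing that the only asymptotics gained in passing from weight $-1/2-$ to $-3/2-$ are constant-coefficient combinations of $d t,d x^1,d x^2,d x^3$, and then a reduction to Theorem \ref{Thm1} after subtracting $c_0\pa_t^\flat+\sum_i c_i\,\omega_{b,s 1}(\scal_i)$. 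That part is sound.

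The gap is in the cokernel statement \eqref{Eq10Grows0KerAdj}. The normal operator argument on the supported-dual side is completely symmetric at $\pa_+X$: it produces exactly the same four candidate leading asymptotics $d t,d x^1,d x^2,d x^3$ for an element of $\ker\wh{\Box_{g_b,1}}(0)^*\cap\Hbsupp^{-\infty,-3/2-}$, so \emph{a priori} one would expect a fifth dual state with $d t$ asymptotics. Your stated reason for its absence --- an ``asymmetric boundary pairing between extendible and supported formulations at $\pa_+X$'' --- is not correct: the supported/extendible distinction lives at the artificial boundary $\pa_-X$ beyond the horizon, not at infinity, and it does not remove any indicial root at $\pa_+X$. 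The actual mechanism is an obstruction: writing $\omega^*=\chi(v_0\,d t+v')+\tilde\omega^*$ with $\tilde\omega^*\in\Hbsupp^{-\infty,-1/2-}$ and subtracting $\omega_{b,s 1}^*(\scal)$ to kill $v'$, one finds that $v_0\,\wh{\Box_{g_b,1}}(0)^*(\chi\,d t)=-\wh{\Box_{g_b,1}}(0)^*\tilde\omega^*$ must be orthogonal to $\ker\wh{\Box_{g_b,1}}(0)\cap\Hbext^{\infty,-1/2-}=\la\omega_{b,s 0}\ra$; the nonvanishing pairing $\bigl\la\wh{\Box_{g_b,1}}(0)^*(\chi\,d t),\omega_{b,s 0}\bigr\ra=4\pi$ (computed in the paper via an $\epsilon$-rescaled cutoff and an explicit evaluation against the Coulomb mode $\omega_{b,s 0}$, see \eqref{Eq10NoDualdt}) then forces $v_0=0$. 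This computation is the substantive content of the dual inclusion and cannot be replaced by a symmetry or formal-duality remark; without it your argument does not distinguish the $4$-dimensional cokernel from a putative $5$-dimensional one.
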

\begin{proof}
We follow closely the proof of \cite[Proposition 7.8]{HHV}. Using $d\wh{\Box_{g_b,0}}(0)=\wh{\Box_{g_b,1}}(0)d$ we see that the R.H.S. of \eqref{Eq10Grows0Ker} lies in the left hand side of the same equation, the same argument shows the inclusion of the R.H.S. of  \eqref{Eq10Grows0KerAdj} in the L.H.S. 

For the inclusions $\subseteq$ the one forms 
 \begin{equation}
  \label{Eq10GrowsNormal}
    d t,\ d x^1,\ d x^2,\ d x^3;
  \end{equation}
play a central role as they are annihilated by the normal operator $\wh{\Box_{\ubar g,1}}(0)$.

  In order to prove `$\subseteq$' in~\eqref{Eq10Grows0Ker}, note that any $\omega\in\ker\wh{\Box_{g_b}}(0)\cap\Hbext^{\infty,-3/2-}$ is of the form $\omega=\chi\,v+\tilde\omega$ where $v$ is a linear combination (with constant coefficients) of the 1-forms~\eqref{Eq10GrowsNormal}, and $\tilde\omega\in\Hbext^{\infty,-1/2-}$. Here $\chi$ is a radial cutoff which equals $1$ at infinity and $0$ for $r\le 3\bhm$. This follows from a normal operator argument. Upon subtracting a linear combination of $\omega_{b,s 0}^{(0)}$ and $\omega_{b,s 1}(\scal)$ from $\omega$, we can thus assume $\omega=\tilde\omega$, which by Theorem~\ref{Thm1} is a scalar multiple of $\omega_{b,s 0}$.

  The argument for proving `$\subseteq$' in~\eqref{Eq10Grows0KerAdj} is slightly more subtle. There is an obstruction to the existence of a mode with $\pa_t^\flat$ asymptotics given by the nonvanishing pairing 
   \begin{equation}
  \label{Eq10NoDualdt}
    \big\la\wh{\Box_{g_{b,1}}}(0)^*(\chi dt),\omega_{b,s 0}\big\ra=4\pi\neq 0. 
  \end{equation}
  To show \eqref{Eq10NoDualdt}, first note that \eqref{Eq10NoDualdt} does not depend on the choice of the cutoff. Indeed if $\chi,\, \tilde{\chi}$ are two such cutoffs, then we have 
  \begin{align*}
  (\chi-\tilde{\chi}) dt\in \Hbext^{\infty,\infty}. 
  \end{align*} 
  Now fix such a cutoff $\chi$ and consider $\chi_{\epsilon}(\rho)=\chi\left(\frac{\rho}{\epsilon}\right)$. The result will then be independent of $\epsilon$. To compute the exact value, first note that for $Q\in \rho^3{\rm Diff}_b^2$, we have 
  \begin{align*}
  \la Q(\chi_{\epsilon} dt),\omega_{b,s0}\ra=\cO(\epsilon^{1-}). 
  \end{align*}
  To show this we have to consider terms of the form $\rho^3\eta,\, \rho^4\chi'\left(\frac{\rho}{\epsilon}\right)\frac{1}{\epsilon}\eta$ and $\rho^5\chi''\left(\frac{\rho}{\epsilon}\right)\frac{1}{\epsilon^2}\eta$, where $\eta$ is one of the forms $dt,\, dx^1,\, dx^2,\, dx^3$.  The statement then follows from ($\delta>0$):
  \begin{align*}
  \int_0^1\left\vert\rho^{-1/2-\delta}\rho^3\chi_{\epsilon}(\rho)\right\vert\frac{d\rho}{\rho^4}&\lesssim \int_0^{\epsilon}\rho^{1-2\delta}d\rho\lesssim \epsilon^{2(1-\delta)},\\ 
  \int_0^1\left\vert\rho^{-\half-\delta}\rho^4\chi'\left(\frac{\rho}{\epsilon}\right)\right\vert^2\frac{1}{\epsilon^2}\frac{d\rho}{\rho^4}&=\int_0^1\rho^{3-2\delta}\left\vert\chi'\left(\frac{\rho}{\epsilon}\right)\right\vert^2\frac{1}{\epsilon^2}d\rho\\
  &=\epsilon^{2(1-\delta)}\int_0^{1/\epsilon}\rho^{3-2\delta}\vert\chi'(\rho)\vert^2d\rho,\\
  \int_0^1\left\vert \rho^{-\half-\delta}\rho^5\chi''\left(\frac{\rho}{\epsilon}\right)\right\vert^2\frac{1}{\epsilon^4}\frac{d\rho}{\rho^4}&=\epsilon^{2(1-\delta)}\int_0^{1/\epsilon}\rho^{5-2\delta}\vert\chi''(\rho)\vert d\rho.   
  \end{align*} 
  We therefore only have to compute the pairing for
  \begin{align*}
  \tilde{\omega}=((\rho^2D_{\rho})^2+2\rho^3\partial_{\rho})\chi_{\epsilon})dt=-\rho^4\chi''\left(\frac{\rho}{\epsilon}\right)\frac{1}{\epsilon^2}dt. 
  \end{align*}
  Using that 
  \begin{align*}
  \omega_{b,s0}=\frac{r}{\varrho_b^2}dt-\frac{a\sin^2r}{\varrho_b^2}d\varphi+\frac{r_+}{\Delta_b}dr
  \end{align*}
  we find
  \begin{align*}
  G(\tilde{\omega},\omega_{b,s0})=-\frac{\rho^4}{\epsilon^2}\chi''\left(\frac{\rho}{\epsilon}\right)\frac{r}{\varrho_b^2\Delta_b}(r^2+a^2). 
  \end{align*}
  We then compute 
  \begin{align*}
  \la \tilde{\omega},\omega_{b,s0}\ra&=\frac{1}{\epsilon^2}\int\int\int\chi''\left(\frac{\rho}{\epsilon}\right)\frac{r(r^2+a^2)\sin\theta}{\varrho_b^2\Delta_b}d\rho d\theta d\varphi\\
  &=4\pi\int\rho\chi''(\rho)d\rho+\cO(\epsilon^{1-})=4\pi+\cO(\epsilon^{1-}). 
  \end{align*}
  As the result has to be independent of $\epsilon$ this gives \eqref{Eq10NoDualdt}. Let us also note that 
  \begin{align*}
  \pa_t^\flat=\frac{\Delta_b-\bha^2\sin^2\theta}{\varrho_b^2}dt+\frac{2\bha\bhm r\sin^2\theta}{\varrho_b^2}d\varphi
  \end{align*}
  and thus 
  \begin{align*}
  \pa_t^\flat-dt\in \Hbext^{\infty,-\half-}, \quad \wh \Box^*_{g_b,1}(0) \chi( \pa_t^\flat-dt)\in \Hbext^{\infty,3/2-}. 
  \end{align*}
  Therefore replacing $\chi dt$ by $\chi  \pa_t^\flat$ in \eqref{Eq10NoDualdt} gives the same result. 
  Now, $\omega^*\in\ker\wh{\Box_{g_b}}(0)^*\cap\Hbsupp^{-\infty,-3/2-}$ can be written as $\omega^*=\chi v+\tilde\omega^*,\, \tilde{\omega}^*\in \Hbsupp^{-\infty,-1/2-},$ where $v=v_0\,d t+v'$ with $v_0\in\C$ and $v'$ a linear combination of $d x^1,d x^2,d x^3$. Upon subtracting $\omega_{b,s 1}^*(\scal)$ for a suitable $\scal\in\scalspace_1$, we can assume $v'=0$. Therefore
  \[
    v_0\wh{\Box_{g_b,1}}(0)^*(\chi\,d t)=-\wh{\Box_{g_b,1}}(0)^*\tilde\omega^*  \]
  is necessarily orthogonal to $\ker\wh{\Box_{g_b,1}}(0)\cap\Hbext^{\infty,-1/2-}=\la\omega_{b,s 0}\ra$, which in view of~\eqref{Eq10NoDualdt} implies $v_0=0$, thus $\omega^*=\tilde\omega^*$ is a scalar multiple of $\omega_{b,s 0}^*$ by Theorem~\ref{Thm1}.
\end{proof}
\begin{prop}
\label{Prop10Genv1}
  There exist families
  \[
    \omega_{b,v 1}(\vect) \in \ker\wh{\Box_{g_b,1}}(0)\cap\Hbext^{\infty,-5/2-}, \quad
    \omega_{b,v 1}^*(\vect) \in \ker\wh{\Box_{g_b,1}}(0)^*\cap\Hbsupp^{-\infty,-5/2-},
  \]
  linear in $\vect\in\vectspace_1$, which satisfy 
  
  \begin{align}
  \label{Eq10Genv1}
    \omega_{b_0,v 1}(\vect)=r^2\vect,\quad
    \omega_{b_0,v 1}^*(\vect)=r^2\vect H(r-2\bhm),\\
     \omega_{b,v 1}(\vect)- \omega_{b_0,v 1}(\vect)\in \Hbext^{\infty,-1/2-},\quad  \omega_{b,v 1}^*(\vect)- \omega_{b_0,v 1}^*(\vect)\in \Hsupp^{-\infty,-3/2-}.
  \end{align}
  and which are such that $\delta_{g_b}^*\omega_{b,v 1}(\vect)\in\Hbext^{\infty,1/2-}$ and $\delta_{g_b}^*\omega_{b,v 1}^*(\vect)\in\Hbsupp^{-\infty,-1/2-}$.
\end{prop}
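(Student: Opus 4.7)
The plan is to construct $\omega_{b,v 1}(\vect)$ by a perturbation argument from Schwarzschild, exactly parallel to the construction of $u_{b,s 1}(\scal)$ at the end of the proof of Theorem~\ref{Thm0}. The starting observation is that on Schwarzschild ($b_0=(\bhm,0)$) the three rotational Killing vectors $\xi_j$, $j=1,2,3$, have duals $\xi_j^\flat$ whose leading asymptotics at infinity are $r^2\vect_j$ for a basis $\vect_j$ of $\vectspace_1$ (e.g.\ $\pa_\varphi^\flat = -r^2\sin^2\theta\,d\varphi$); since they are Killing, $\delta_{g_{b_0}}^*(r^2\vect)=0$ and hence $\wh{\Box_{g_{b_0},1}}(0)(r^2\vect)=0$ by Ricci-flatness. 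As sections of $\wt\Tsc{}^*X$, spherical $1$-forms carry an implicit factor of $r^{-1}$ in the scattering framing, so that $r^2\vect\in\cA^{-1}\subset\Hbext^{\infty,-5/2-}$.

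Fix a smooth radial cutoff $\chi$ equal to $0$ on $r\le 3\bhm$ and to $1$ on $r\ge 4\bhm$, and set $v:=\chi\,\omega_{b_0,v 1}(\vect)\in\Hbext^{\infty,-5/2-}$. Then
\[
  e:=\wh{\Box_{g_b,1}}(0)v=[\wh{\Box_{g_{b_0},1}}(0),\chi]\omega_{b_0,v 1}(\vect)+\bigl(\wh{\Box_{g_b,1}}(0)-\wh{\Box_{g_{b_0},1}}(0)\bigr)v,
\]
where the commutator is compactly supported in $\{3\bhm\le r\le 4\bhm\}$ (hence in $\Hbext^{\infty,\infty}$), and the Kerr--Schwarzschild difference lies in $\rho^4\Diffb^2$ by the 1-form analogue of Lemma~\ref{LemmaKStNormal} (cf.\ \cite[(3.44)]{HHV}), sending $v$ into $\Hbext^{\infty,3/2-}$. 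Thus $e\in\Hbext^{\infty,3/2-}$, and by Theorem~\ref{Thm1}(2) the equation $\wh{\Box_{g_b,1}}(0)w=-e$ admits a solution $w\in\Hbext^{\infty,-1/2-}$ provided $e$ is $L^2$-orthogonal to the $1$-dimensional cokernel $\langle\omega_{b,s 0}^*\rangle=\langle\delta(r-r_+)\,dr\rangle$. Since $r_+\le 2\bhm<3\bhm$, the error $e$ vanishes in a neighborhood of $r_+$ and this pairing is automatic. Set $\omega_{b,v 1}(\vect):=v+w$; then $\omega_{b,v 1}(\vect)-\omega_{b_0,v 1}(\vect)=(\chi-1)\omega_{b_0,v 1}(\vect)+w\in\Hbext^{\infty,-1/2-}$, as required.

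For the symmetric-derivative bound, expand
\[
  \delta_{g_b}^*\omega_{b,v 1}(\vect)=[\delta_{g_b}^*,\chi]\omega_{b_0,v 1}(\vect)+\chi\bigl(\delta_{g_b}^*-\delta_{g_{b_0}}^*\bigr)\omega_{b_0,v 1}(\vect)+\chi\,\delta_{g_{b_0}}^*\omega_{b_0,v 1}(\vect)+\delta_{g_b}^* w,
\]
where the third summand vanishes by the Killing property, the commutator is compactly supported, the Kerr--Schwarzschild difference $\delta_{g_b}^*-\delta_{g_{b_0}}^*\in\rho^3\Diffb^1$ places the second summand in $\Hbext^{\infty,1/2-}$, and since $\delta_{g_b}^*=\delta_{\ubar g}^*+\rho^2\Diffb^1$ with $\delta_{\ubar g}^*\in\Diffsc^1$ gaining one unit of weight (by Lemma~\ref{LemmaKStNormal}), one has $\delta_{g_b}^* w\in\Hbext^{\infty,1/2-}$. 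For the adjoint family $\omega_{b,v 1}^*(\vect)$, take the Schwarzschild seed $r^2\vect\,H(r-2\bhm)\in\Hbsupp^{-\infty,-5/2-}$, which lies in $\ker\wh{\Box_{g_{b_0},1}}(0)^*$ in the distributional sense (the Killing property, combined with the simple zero of $g^{rr}$ at the Schwarzschild horizon, kills the would-be $\delta$-function boundary contributions from differentiating the Heaviside); the perturbation to Kerr proceeds identically, now using a cutoff adapted to the Kerr horizon $r_+$ and, if needed, subtracting a multiple of an already-constructed kernel element of $\wh{\Box_{g_b,1}}(0)$ to arrange orthogonality to the cokernel $\langle\omega_{b,s 0}\rangle$ of $\wh{\Box_{g_b,1}}(0)^*$.

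The main obstacle I expect is the careful distributional analysis underlying the adjoint seed: namely, verifying that $r^2\vect\,H(r-2\bhm)$ is genuinely annihilated by $\wh{\Box_{g_{b_0},1}}(0)^*$ as a distribution (no surviving $\delta$ or $\delta'$ contributions from differentiating the Heaviside across the degenerating $g^{rr}$), and then transplanting the construction to Kerr where the Schwarzschild horizon $2\bhm$ and the Kerr horizon $r_+$ no longer coincide. Everything else is a straightforward b-analytic transcription of the scalar case in Theorem~\ref{Thm0}(4).
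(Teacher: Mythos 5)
Your construction of the direct family $\omega_{b,v 1}(\vect)$ is correct and is essentially the intended argument: the paper itself gives no details here, deferring to \cite[Proposition 7.10]{HHV}, and that argument is exactly the Schwarzschild-seed-plus-correction scheme you describe (Killing dual $r^2\vect$, cutoff, error in $\Hbext^{\infty,3/2-}$ via $\wh{\Box_{g_b,1}}(0)-\wh{\Box_{g_{b_0},1}}(0)\in\rho^4\Diffb^2$, solvability from Theorem~\ref{Thm1}(2) with the cokernel pairing against $\delta(r-r_+)\,dr$ vanishing because the error is supported away from the horizon). The decay claims for $\delta_{g_b}^*\omega_{b,v 1}(\vect)$ are also handled correctly.

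The gap is in the adjoint family, and it is precisely the point the paper flags in the sentence following the proposition (``we give up one decay order for $\omega_{b,v 1}^*(\vect)$''). For the adjoint problem $\wh{\Box_{g_b,1}}(0)^*w^*=-e^*$ the cokernel obstruction is $\ker\wh{\Box_{g_b,1}}(0)\cap\Hbext^{\infty,-1/2-}=\la\omega_{b,s 0}\ra$, and unlike the direct case this pairing is \emph{not} automatically zero: $\omega_{b,s 0}$ does not vanish anywhere, and the formal integration by parts $\la\wh{\Box}^*(\chi v^*),\omega_{b,s 0}\ra=\la\chi v^*,\wh{\Box}\omega_{b,s 0}\ra=0$ is not justified since $\chi v^*\sim r$ and $\omega_{b,s 0}\sim r^{-1}$ give a borderline, non-decaying boundary term at infinity. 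Your proposed fix --- subtracting a multiple of an element of $\ker\wh{\Box_{g_b,1}}(0)$ from the seed --- cannot work: such an element is either annihilated by the (formally self-adjoint) operator, so it does not change $e^*$, or it is an extendible rather than supported distribution and takes you out of the relevant space. The actual resolution is encoded in the statement itself: one solves for $w^*$ only in $\Hbsupp^{-\infty,-3/2-}$, i.e.\ at a weight $q'\in(-5/2,-3/2)$, where the dual weight $-q'-2\in(-1/2,1/2)$ forces the cokernel $\ker\wh{\Box_{g_b,1}}(0)\cap\Hbext^{\infty,-q'-2}$ to be trivial (since $\omega_{b,s 0}\in\cA^1$ just fails to lie in $\Hbext^{\infty,-1/2+}$), so the equation is solvable without any orthogonality condition. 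This is exactly why the proposition concedes $\omega_{b,v 1}^*(\vect)-\omega_{b_0,v 1}^*(\vect)\in\Hsupp^{-\infty,-3/2-}$ rather than $-1/2-$; your ``proceeds identically'' would, if it worked, deliver the stronger rate, and the asymmetry in the stated conclusions should have been the tip-off. A secondary loose end: for $b\neq b_0$ the Heaviside jump must be moved to the Kerr horizon $r_+$ (the surface $r=2\bhm$ is no longer characteristic), which you mention only in passing; the resulting discrepancy $r^2\vect\,(H(r-r_+)-H(r-2\bhm))$ is compactly supported and harmless for the weights, but it does need to be said.
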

The proof is strictly analogous to the proof of \cite[Proposition 7.10]{HHV}, we omit the details. In particular the decay properties are already obtained in this proof. Note however that we give up one decay order for $\omega_{b,v 1}^*(\vect)$ with respect to $\omega_{b,v 1}(\vect)$. 

 \section{The linearized Einstein equation} 
 In this section we prove the main theorem already stated in section \ref{SecMT}.
 \label{SecEinst}
 \subsection{Main theorem}
 \begin{thm}
\label{ThmMS}
  Let $0<\bha<\bhm$, $\sigma\in\C$, $\Im\sigma\geq 0$, and suppose $\dot g=e^{-i\sigma t_*}\tilde{h},\, \tilde{h}\in \Hbext^{\infty,q}(X;S^{2}\wt{\Tsc^*X}),$ $q\in (-3/2;-1/2)$ is an outgoing mode solution of the linearized Einstein equation
  \begin{equation}
  \label{EqMSLinEq}
    D_{g_b}\Ric(\dot g)=0.
  \end{equation}
  Then there exist parameters $\dot\bhm\in\R$, $\dot\bha\in\R^3$, and an outgoing 1-form $\omega\in \Hbext^{\infty,q-1}(X;\wt{\Tsc}^*X)$, such that
  \begin{equation}
  \label{EqMSLinDec}
    \dot g-\dot g_{(\bhm,\bha)}(\dot\bhm,\dot\bha)=\delta_{g_b}^*\omega,
  \end{equation}
  where $\dot g_{(\bhm,\bha)}(\dot\bhm,\dot\bha)$ is defined in equation \eqref{gdoteqn}.
\end{thm}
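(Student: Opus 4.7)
The strategy is to first extract the gauge invariant information and then work up to a pure gauge term. Given a mode solution $\dot g$, the plan is to first compute the linearized extreme Weyl scalars $\boldsymbol\Phi_{\pm 2}$, associated to $\dot g$ via $\boldsymbol\Phi_{\pm 2} = \vartheta \Psi_{\pm 2}$ as in~\eqref{varPsis}. Since $\dot g$ is an $e^{-i\sigma t_*}$ mode in $\Hbext^{\infty,q}$, the scalars $\boldsymbol\Phi_{\pm 2}$ are themselves mode solutions of the Teukolsky master equation~\eqref{TME} in $\Hbext^{\infty,q(\pm 2)}$ (after rescaling by $\bhp^{\mp 4}$ for $s=-2$), lying in the weight range covered by Proposition~\ref{prop3.1}. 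By Theorem~\ref{Thm3.1} applied with $s=\pm 2$ we conclude $\boldsymbol\Phi_{\pm 2} = 0$.

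With both extreme Teukolsky scalars vanishing, I would invoke the classification of gauge invariants of linearized gravity on Kerr from~\cite{2018PhRvL.121e1104A, 2019arXiv191008756A} together with the observation of~\cite{Aksteiner:GI} that, for vanishing $\boldsymbol\Phi_{\pm 2}$, the remaining gauge invariants of $\dot g$ coincide with those of a linearized Plebanski--Demianski line element, parametrized by perturbations $(\dot\mfrak,\dot\afrak,\dot\cfrak,\dot\nfrak)$ of mass, rotation, acceleration, and NUT parameters. Using the completeness of this invariant set and the Poincar\'e lemma on the (topologically trivial enough) exterior region, one promotes this local statement to a global decomposition: there is a 1-form $\omega_0$ on $M^\circ_b$ and constants $(\dot\mfrak,\dot\afrak,\dot\cfrak,\dot\nfrak)$ such that
\begin{equation*}
  \dot g \,=\, \dot g^{\mathrm{PD}}_b(\dot\mfrak,\dot\afrak,\dot\cfrak,\dot\nfrak) + \delta_{g_b}^*\omega_0 ,
\end{equation*}
where $\dot g^{\mathrm{PD}}_b$ denotes the restriction to the Kerr background of the full linearized Plebanski--Demianski family. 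Since pure gauge terms and linearized Kerr pieces are themselves $e^{-i\sigma t_*}$ modes, $\omega_0$ can be chosen of the same form.

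It remains to show $\dot\cfrak = \dot\nfrak = 0$ (and, when $\sigma\neq 0$, also $\dot\mfrak = \dot\afrak = 0$). For a priori faster decaying $\dot g$ (say $\cO(r^{-1})$), the Plebanski--Demianski invariants associated with $\dot\cfrak,\dot\nfrak$ grow or decay too slowly and must vanish. However, the function space $\Hbext^{\infty,q}$ with $q\in(-3/2,-1/2)$ does not directly impose such decay on $\dot g$ itself. The remedy, and what I expect to be the central technical point, is a normal operator / Mellin transform argument exactly parallel to the proof of Proposition~\ref{prop3.1}, which upgrades the rough a priori regularity of $\dot g$ to a polyhomogeneous expansion at $\partial_+ X$. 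This argument is only available for an operator whose normal operator has a well-understood indicial family, namely the gauge-fixed linearized Einstein operator $L_{g_b}$ from Section~\ref{Sec7}, not the bare $D_{g_b}\Ric$. I would therefore first correct $\dot g$ by subtracting an explicit linearized Kerr piece and a Lie derivative so that the remainder satisfies the wave map/de Turck gauge condition, then apply the normal operator argument to the resulting element of $\ker\wh{L_{g_b}}(\sigma)$, read off the leading behavior, and conclude that $\dot\cfrak$ and $\dot\nfrak$ must vanish (and that $\dot\mfrak,\dot\afrak$ can be nonzero only in the $\sigma=0$ case, where they give the genuine Kerr parameter perturbations). For $\sigma \neq 0$, Theorem~\ref{Thm1} on the $1$-form wave operator together with the gauge analysis rules out all remaining nonzero modes, yielding $\dot g = \delta_{g_b}^*\omega$. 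For $\sigma = 0$, one obtains~\eqref{EqMSLinDec} with the stated decay $\omega\in\Hbext^{\infty,q-1}$, the loss of one order coming from the fact that $\delta_{g_b}^*$ is first order.
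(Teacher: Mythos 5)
Your outline of the $\sigma=0$ case is essentially the paper's proof: vanishing of $\boldsymbol\Phi_{\pm2}$ via Theorem~\ref{Thm3.1}, identification of the remaining invariants with a linearized Plebanski--Demianski perturbation (Proposition~\ref{prop6.5}), globalization via the compatibility complex and the Poincar\'e lemma (Lemma~\ref{lem:commut}), and --- the key technical point you correctly isolate --- the need to first correct $\dot g$ to a solution of the gauge-fixed equation $\wh{L_{g_b}}(0)$ before the normal operator argument yields the polyhomogeneous expansion (Lemma~\ref{lem5.1} and Proposition~\ref{prop5.1}), after which the decay of $\InvSymb_\zeta$ (Proposition~\ref{propdecayInv}) kills $\dot\nfrak$ and $\dot\cfrak$. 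The one ingredient you gloss over there is the solvability condition for the correcting $1$-form: the right-hand side of $\wh{\Box}_{g_b,1}(0)\omega=-2\delta_{g_b}G_{g_b}\dot g+2\delta_{g_b}G_{g_b}\dot g_b(\lambda,\dot\bha)$ must be orthogonal to the cokernel $\la\omega_{b,s_0}^*\ra$ of Theorem~\ref{Thm1}, which is arranged by choosing $\lambda(\dot\bha)$ using the nonvanishing pairing $\la\delta_{g_b}G_{g_b}\dot g_b(1,0),\omega_{b,s_0}^*\ra=8\pi$. Where you genuinely diverge is the $\sigma\neq 0$ case: the paper does not run the invariant classification at all there, but instead uses the identity $k=\delta_{g_b}^*\omega+\Lie_\xi\dot g$ of \cite{2016arXiv160106084A}, in which $k$ vanishes once $\hat\Psi_{[\pm2]}=0$; combined with $\Lie_\xi\dot g=-i\sigma\dot g$ this gives $\dot g=\tfrac{i}{\sigma}\delta_{g_b}^*\omega$ in one line, with $\omega$ automatically an outgoing mode. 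Your route (PD invariants are stationary, hence must vanish against a nonzero-frequency mode, then completeness plus Lemma~\ref{lem:commut}) can be made to work, but your appeal to Theorem~\ref{Thm1} at that point is misplaced --- that theorem is needed only for the gauge-fixed statement of Section~\ref{Sec7}, where one must additionally show the residual gauge $1$-form vanishes; for Theorem~\ref{ThmMS} itself the conclusion for $\sigma\neq0$ is just that $\dot g$ is pure gauge.
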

\subsection{Link to the gauge fixed linearized Einstein operator} 
\label{Sec6.2}
Let 
\begin{align*}
L_{g_b}:=2(D_{g_b}{\rm Ric}+\delta^*_{g_b}\delta_{g_b}G_{g_b})
\end{align*}
be the linearized Einstein operator around the Kerr metric in the wave map/De Turck gauge. Here $G_{g_b}={\bf 1}-\frac{1}{2}g_b {\rm tr}_{g_b}$ denotes the trace reversal operator in $4$ spacetime dimensions. We start with the following
\begin{prop}
\label{prop5.1}
Suppose $\dot{g}\in \Hbext^{\infty,q}(X),\, q\in (-3/2;-1/2)$. If $\wh{L_{g_b}}(0)\dot{g}=0$, then there exists
$g_0\in C^{\infty}(\partial X;S^{2}\wt{\Tsc_{\partial X}^*X})$ such that  $\dot{g}-\rho g_0\in \cA^{2-}$. 
\end{prop}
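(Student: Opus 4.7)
The plan is to run a normal operator argument at the infinity boundary $\partial_+X$, extracting the leading $\rho$-order term from an asymptotic expansion of $\dot g$. First I would use the Fredholm framework for $\wh{L_{g_b}}(0)$ (elliptic regularity in the interior, radial point estimates at the horizon, and real principal type propagation towards $\partial_+X$ in the scattering-b setting of Vasy, as built up in the earlier part of the paper and in \cite{HHV},\cite{M2}) together with $\wh{L_{g_b}}(0)\dot g=0$ to upgrade $\dot g$ to be conormal of all Sobolev orders, so that $\dot g\in\Hbext^{\infty,q}(X;S^2\wt{\Tsc^*}X)$.

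Next, working in the standard frame $dt,dx^1,dx^2,dx^3$ of $\wt{\Tsc^*}X$ near $\partial_+X$, I identify the normal operator $\cN$ of $\wh{L_{g_b}}(0)$. Lemma~\ref{LemmaKStNormal} gives $\wh{\Box_{g_b,2}}(0)-\wh{\Box_{\ubar g,2}}(0)\in\rho^3\Diffb^2$, and on the Ricci-flat Kerr background the Weitzenböck-type correction $L_{g_b}-\Box_{g_b,2}$ is zeroth order with coefficients involving $\Riem(g_b)=\cO(r^{-3})$, hence in $\rho^3\Diffb^0$. Thus in the standard basis $\cN=-\Delta_{\R^3}\otimes\mathrm{id}_{10}$ acts componentwise and $\wh{L_{g_b}}(0)-\cN\in\rho^3\Diffb^2$. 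Fixing a cutoff $\chi$ equal to $1$ near $\partial_+X$ and vanishing for $r\leq 3\bhm$, the equation $\wh{L_{g_b}}(0)\dot g=0$ yields
\[
  \cN(\chi\dot g)=(\cN-\wh{L_{g_b}}(0))(\chi\dot g)+[\wh{L_{g_b}}(0),\chi]\dot g\in\Hbext^{\infty,q+3},
\]
the commutator being compactly supported. In polar form $\rho^{-2}\cN=-(\rho\partial_\rho)^2+\rho\partial_\rho+\slDelta$, with indicial family $P(\lambda)=\lambda^2+i\lambda+\slDelta$ and indicial roots $\lambda=il$, $\lambda=-i(l+1)$ on $\scalspace_l$.

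I then Mellin transform and invert $P(\lambda)$ to extend $\cM(\chi\dot g)$ meromorphically from the initial line $\Im\lambda=-(q+3/2)\in(-1,0)$. Shifting the contour to any $\Im\lambda=-(q'+3/2)\in(-2,-1)$, the only pole crossed is the simple pole at $\lambda=-i$, which arises solely from the $l=0$ sector since $P(-i)|_{\scalspace_l}=l(l+1)$ vanishes only for $l=0$. Restricted to each of the ten standard-basis components $\dot g^{ij}$, the residue is a constant $c^{ij}\in\C$ times $\rho$; setting $g_0:=\sum_{ij}c^{ij}\,e_i\otimes_s e_j$ gives a smooth (in fact constant in $\omega$) section of $S^2\wt{\Tsc^*_{\partial X}}X$ with
\[
  \dot g-\rho g_0\in\Hbext^{\infty,q'}(X;S^2\wt{\Tsc^*}X)\quad\text{for any }q'<1/2,
\]
which by the Sobolev embedding~\eqref{EqBHbSobEmb} lies in $\cA^{2-}$. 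Iterating the normal operator argument once more on the remainder gives the required conormal bound with uniform polyhomogeneous error.

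The main obstacle is the structural identification $\wh{L_{g_b}}(0)-\cN\in\rho^3\Diffb^2$ in the tensor-valued setting, which requires carefully combining Lemma~\ref{LemmaKStNormal} with the Weitzenböck curvature correction and checking that working in the standard coordinate frame of $\wt{\Tsc^*}X$ indeed diagonalises the normal operator. A secondary subtlety is the Mellin inversion uniformly in the spherical harmonic decomposition, ensuring that the residue at $\lambda=-i$ assembles into a genuine smooth boundary section rather than a formal symbol; this is analogous to the expansions carried out for the scalar wave operator in the proof of Proposition~\ref{prop3.1}.
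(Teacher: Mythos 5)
Your argument is correct and is essentially the paper's own proof (which defers the details to \cite[Proposition 4.4]{HHV}): identify the normal operator of $\wh{L_{g_b}}(0)$ as the Euclidean Laplacian tensored with the $10\times 10$ identity in the standard coordinate trivialization of $S^2\,\wt{\Tsc^*}X$, Mellin transform, read off the indicial roots $il$ and $-i(l+1)$, and extract the $\rho$-coefficient from the $l=0$ pole at $\lambda=-i$ by shifting the contour (iterating once to reach the full $\cA^{2-}$ remainder). The only cosmetic point is that the hypothesis already places $\dot g$ in $\Hbext^{\infty,q}$, so your initial regularity upgrade is redundant.
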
 
\begin{proof}
The proof is in principle the same as the proof of \cite[Proposition 4.4]{HHV}, we refer to this proof for details.  We note that in the full subextreme range of $\bha$ the normal operator of $\wh{L_{g_{b}}}(0)$ is the negative euclidian Laplacian tensored with the $10\times 10$ identity matrix when working in the standard coordinate trivialization.  The boundary spectrum of the scalar euclidian Laplacian is, by definition, the divisor of $\wh{\rho^{-2}\Delta}(\lambda)^{-1},$ where the hat stands for Mellin transform in $\rho$ and $-\Delta=\rho^2\rho D_{\rho}\rho D_{\rho}+i\rho^3D_{\rho}-\rho^2\slDelta$ is the positive euclidean Laplacian. Decomposing functions on $\partial X$ into spherical harmonics, and denoting by $\Sph_l$ 
a degree $l\in \N_0$ spherical harmonic, we have
\[\rho^{-i\lambda}(-\rho^{-2}\Delta)(\rho^{i\lambda}\Sph_l)=(\lambda(\lambda+i)+l(l+1))\Sph_l,\]
which vanishes for $\lambda=il,\, \lambda=-i(l+1)$. With respect to the choice of our Sobolev spaces only the $\lambda=-i(l+1)$ are relevant for the expansion, the $l=0$ spherical harmonics gives the $\rho$ term in the expansion. 
\end{proof}
\begin{lemma}
\label{lem5.1}
Suppose that $\dot{g}\in \Hbext^{\infty,q}(X;S^{2}\wt{\Tsc^*X}),\, q\in (-3/2,-1/2)$ is a stationary solution of the linearized Einstein equation 
\[D_{g_b}{\rm Ric}(\dot{g})=0.\]
\begin{enumerate}
\item  For all $\dot{\bha}\in \R^3$, there exists $\lambda(\dot{\bha})\in \R$, a $1-$form $\omega\in \Hbext^{\infty,q-1}(X;\wt{\Tsc}^*X)$ and $g_0\in C^{\infty}(\partial X;S^{2}\wt{\Tsc_{\partial X}^*X})$  such that 
\[\dot{g}-\delta_{g_b}^*\omega-\dot{g}_b(\lambda(\dot{\bha}),\dot{\bha})-\rho g_0\in \cA^{2-}.\] 

\item The Teukolsky scalars $\hat{\Psi}_{[\pm 2]}$ are zero.
\end{enumerate}
\end{lemma}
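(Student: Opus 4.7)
I will bring $\dot g$ into wave-map gauge by subtracting a Lie derivative $\delta_{g_b}^*\omega$ and a linearized Kerr term $\dot g_b(\lambda,\dot\bha)$, and then apply Proposition~\ref{prop5.1}. Using the Ricci-flat identity $2\delta_{g_b}G_{g_b}\delta_{g_b}^* = \Box_{g_b,1}$, the wave-map condition on the corrected metric $\dot g' := \dot g - \delta_{g_b}^*\omega - \dot g_b(\lambda,\dot\bha)$ becomes the 1-form wave equation
\[
  \widehat{\Box_{g_b,1}}(0)\omega \;=\; 2\delta_{g_b}G_{g_b}\bigl(\dot g - \dot g_b(\lambda,\dot\bha)\bigr),
\]
whose right-hand side lies in $\Hbext^{\infty,q+1}(X;\wt{\Tsc}^*X)$. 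I look for $\omega\in \Hbext^{\infty,q-1}(X;\wt{\Tsc}^*X)$, a weight on which $\widehat{\Box_{g_b,1}}(0)$ is Fredholm. Solvability is obstructed by a finite-dimensional pairing with the cokernel generators $\omega^*_{b,s_0}$ and $\omega^*_{b,s_1}(\scal)$ from Proposition~\ref{Prop10Grow}. The four free parameters in $\dot g_b(\lambda,\dot\bha)$ match the dimension of this cokernel: explicit horizon integrals (exploiting the concrete form $\omega^*_{b,s_0}=\delta(r-r_+)\,dr$) and asymptotic-sphere integrals show that the induced obstruction map is a linear isomorphism, so that for each $\dot\bha\in\R^3$ there exists a unique $\lambda(\dot\bha)\in\R$ making the gauge equation solvable. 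Once $\omega$ and $\lambda(\dot\bha)$ are fixed, $\dot g'\in\Hbext^{\infty,q}$ satisfies $\widehat{L_{g_b}}(0)\dot g' = 0$, and Proposition~\ref{prop5.1} produces $g_0\in C^\infty(\partial X; S^{2}\wt{\Tsc_{\partial X}^*X})$ with $\dot g' - \rho g_0 \in \cA^{2-}$, which rearranges to the claim.

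\textbf{Plan for part (2).} The plan relies on two structural facts about $\hat\Psi_{[\pm 2]}$. First, the extremal Teukolsky scalars are gauge-invariant linearized curvature quantities, built from the linearized Weyl tensor via the spinor variational operator $\vartheta$ of \cite{BVK, 2019JMP....60h2501A}; in particular they annihilate pure-gauge additions $\delta_{g_b}^*\omega$. Second, every Kerr metric is Petrov type D, so along any smooth curve $s\mapsto g_{b+s\dot b}$ the extremal Weyl spinor components vanish in a co-moving principal tetrad; the tetrad-independence of $\vartheta\Psi_{\pm 2}$ then forces $\hat\Psi_{[\pm 2]}[\dot g_b(\lambda,\dot\bha)] = 0$. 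Combining these, $\hat\Psi_{[\pm 2]}[\dot g] = \hat\Psi_{[\pm 2]}[\dot g']$. A scattering-weight count for two derivatives of $\dot g'$ paired with the principal null tetrad legs, boost-normalized by $\Delta_b^{\pm 2}$, places $\hat\Psi_{[\pm 2]}\in\Hbext^{\infty,q(\pm 2)}$ with $q\in(-3/2,-1/2)$, exactly the hypothesis of Theorem~\ref{Thm3.1}(2). Since $\hat\Psi_{[\pm 2]}$ solves $\widehat T_{\pm 2}(0)\hat\Psi_{[\pm 2]} = 0$, that theorem forces $\hat\Psi_{[\pm 2]} = 0$.

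\textbf{Main obstacle.} I expect the hardest step to be the explicit pairing computation in part (1): verifying that the map $(\lambda,\dot\bha)\mapsto(\text{cokernel pairings})$ is a linear isomorphism requires evaluating horizon and asymptotic-sphere integrals involving $\omega^*_{b,s_0}$ and $\omega^*_{b,s_1}(\scal)$, whose non-degeneracy has to be controlled uniformly in the full subextreme range of $\bha$. The scattering-weight bookkeeping in part (2) is largely routine, but the boost normalization $\Delta_b^{\pm 2}$---which is singular at the horizon and grows like $r^{\pm 4}$ at infinity---must be tracked carefully so that the Teukolsky scalars land in the exact Sobolev range where Theorem~\ref{Thm3.1}(2) applies.
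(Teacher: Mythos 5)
Your overall architecture for part (1) is the paper's: subtract $\delta_{g_b}^*\omega+\dot g_b(\lambda,\dot{\bha})$, reduce the wave-map condition to $\wh{\Box_{g_b,1}}(0)\omega=-2\delta_{g_b}G_{g_b}\bigl(\dot g-\dot g_b(\lambda,\dot{\bha})\bigr)$, and then invoke Proposition~\ref{prop5.1}. But your cokernel accounting is wrong, and the error propagates into a logical inconsistency. You seek $\omega\in\Hbext^{\infty,q-1}$ with right-hand side in $\Hbext^{\infty,q+1}$, so the relevant obstruction space is $\ker\wh{\Box_{g_b,1}}(0)^*$ intersected with the dual of the \emph{target}, i.e.\ with $\Hbsupp^{-\infty,-q-1}$ where $-q-1\in(-\half,\half)$. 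The distributions $\omega_{b,s1}^*(\scal)=d u_{b,s1}^*(\scal)$ are $O(1)$ at infinity, hence lie in $\Hbsupp^{-\infty,-3/2-}$ but \emph{not} in $\Hbsupp^{-\infty,-1/2-}$; they do not define continuous functionals on $\Hbext^{\infty,q+1}$ and are therefore not part of the cokernel at this weight. By Theorem~\ref{Thm1}(2) the only obstruction is the compactly supported $\omega_{b,s 0}^*=\delta(r-r_+)dr$, so exactly one scalar condition must be arranged, and the single parameter $\lambda$ does it because $\la\delta_{g_b}G_{g_b}\dot g_b(1,0),\omega_{b,s 0}^*\ra=8\pi\neq 0$; the vector $\dot{\bha}$ remains entirely free, which is precisely what the lemma asserts. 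Your version — a four-dimensional obstruction map $(\lambda,\dot{\bha})\mapsto\R^4$ claimed to be an isomorphism — is internally inconsistent with your own conclusion: if that map were an isomorphism, then for generic data and a \emph{fixed} $\dot{\bha}$ there would be no $\lambda$ killing all four pairings, so "for each $\dot{\bha}$ there exists a unique $\lambda(\dot{\bha})$" would not follow. Carrying out your plan as written would either stall at this point or force you back to the correct one-dimensional count.

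For part (2) your argument is valid but roundabout. The paper applies Theorem~\ref{Thm3.1}(2) directly to the Teukolsky scalars of $\dot g$ itself: the weight conventions $q(s)=q-2s$ for $s\geq 0$ and $q(s)=q$ for $s<0$ were set up exactly so that the boost normalization $\Delta_b^{\pm 2}$ is absorbed and two b-derivatives of $\tilde h\in\Hbext^{\infty,q}$ contracted with the rescaled tetrad land in $\Hbext^{\infty,q(\pm 2)}$. Your detour through gauge invariance, type-D invariance of the Kerr family, and the corrected metric $\dot g'$ is correct but unnecessary, and it makes part (2) depend on part (1) when it need not.
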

\begin{proof}
\begin{enumerate}
\item By adding a linearized Kerr metric and a pure gauge solution we want to correct $\dot{g}$ to a solution of the gauge fixed operator. More precisely we are looking for a $1-$form $\omega$ and parameters $\dot{\bha}\in \R^3$ and $\lambda(\dot{\bha})\in \R$, such that 
\[L_{g_b}(\dot{g}-\delta_{g_b}^*\omega-\dot{g}_b(\lambda(\dot{\bha}),\dot{\bha}))=0.\]
This equation is satisfied provided 
\begin{equation}
\label{5.1}
\hat{\Box}_{g_b,1}(0)\omega=-2\delta_{g_b}G_{g_b}\dot{g}+2\delta_{g_b}G_{g_b}\dot{g}_b(\lambda(\dot{\bha}),\dot{\bha}).
\end{equation}
To arrange that the R.H.S. lies in the image of $\hat{\Box}_{g_b}(0)$ we need
\begin{align*}
\la-\delta_{g_b}G_{g_b}\dot{g}+\delta_{g_b}G_{g_b}\dot{g}_b(\lambda(\dot{\bha}),\dot{\bha}),\omega_{b,s_0}^*\ra=0.
\end{align*}
If $\la\delta_{g_b}G_{g_b}\dot{g}_b(1,0),\omega_{b,s_0}^*\ra\neq 0$ we can arrange this by choosing 
\begin{align*}
\lambda(\dot{\bha})=\frac{\la\delta_{g_b}G_{g_b}\dot{g}-\delta_{g_b}G_{g_b}\dot{g}_b(0,\dot{\bha}),\omega_{b,s_0}^*\ra}{\la\delta_{g_b}G_{g_b}\dot{g}_b(1,0),\omega_{b,s_0}^*\ra}.
\end{align*}
We now compute $\la\delta_{g_b}G_{g_b}\dot{g}_b(1,0),\omega_{b,s_0}^*\ra$. We have for $r\le 3\bhm$
\begin{align*}
\dot{g}_b(1,0)&=-\frac{2r}{\varrho_b^2}(dt_*-a\sin^2\theta d\varphi_*)^2.
\end{align*}
First note that $tr_{g_b}(\dot{g}_b(1,0))=0$ so that $G_{g_b}\dot{g_b}(1,0)=\dot{g}_b(1,0).$
Let 
\begin{align*}
\hat{n}^{\mu}&=-\frac{1}{\sqrt{2}}\partial_r,\\
\hat{n}_{\mu}&=\frac{1}{\sqrt{2}}(dt_*-a\sin^2\theta d\varphi_*). 
\end{align*}
We then have 
\begin{align*}
(\dot{g}_b(1,0))_{\mu\nu}=-\frac{4r}{\varrho_b^2}\hat{n}_{\mu}\hat{n}_{\nu}.
\end{align*}
Now,
\begin{align*}
\nabla^{\mu}(\dot{g}_b(1,0))_{\mu\nu}&=-4\nabla^{\mu}\left(\frac{r}{\varrho_b^2}\hat{n}_{\mu}\hat{n}_{\nu}\right)
=-4\left(\hat{n}^{\mu}\nabla_{\mu}\left(\frac{r}{\varrho_b^2}\right)\hat{n}_{\nu}+\frac{r}{\varrho_b^2}\nabla^{\mu}\hat{n}_{\mu}\hat{n}_{\nu}\right).
\end{align*}
Note that 
\begin{align*} 
\hat{n}^{\mu}\nabla_{\mu}r=-\frac{1}{\sqrt{2}}
\end{align*} 
Recall that the volume element of the Kerr metric is $\sqrt{|\det g_b|} = \varrho_b^2 \sin\theta$. This gives  
\begin{align*}
\nabla^{\mu}\hat{n}_{\mu}={} \frac{1}{\varrho_b^2\sin\theta} \partial_\mu \left (\varrho_b^2 \sin\theta \hat n^\mu\right )  ={} -\sqrt{2} \frac{r}{\varrho_b^2}. 
\end{align*}
This gives 
\begin{align*}
\nabla^{\mu}(\dot{g}_b(1,0))_{\mu\nu}=\frac{2}{\varrho_b^2}(\partial_r)_{\nu}
\end{align*}
and thus 
\begin{align}
\label{EC1}
\la\delta_{g_b}G_{g_b}\dot{g}_b(1,0),\omega_{b}^*\ra&=\int_{r_0}^{\infty}\int_{\Sph^2}g^{\gamma\nu}\nabla^{\mu}(\dot{g}_b(1,0))_{\mu\nu}(\omega^*_{b,s_0})_{\gamma}(r^2_++a^2\cos^2\theta) \sin\theta dr d\theta d\varphi_*\nonumber\\
&= \int_0^\pi  \int_0^{2\pi} 2 \sin\theta d\theta d\varphi_* =8\pi\neq 0.
\end{align}
We can therefore choose $\lambda(\dot{\bha})$ in the above manner and obtain a solution $\omega\in \Hbext^{\infty,q-1}(X)$ of \eqref{5.1}. We now apply Proposition \ref{prop5.1} to see that 
\[\dot{g}-\delta_{g_b}^*\omega-\dot{g}_b(\lambda(\dot{\bha)},\dot{\bha})-\rho g_0\in \cA^{2-}\]
for some suitable $g_0\in C^{\infty}(\partial X;S^{2}\wt{\Tsc_{\partial X}^*X})$.
\item The Teukolsky scalars $\hat{\Psi}_{[\pm 2]}$ have the required regularity to apply Theorem \ref{Thm3.1}. 
\end{enumerate}
\end{proof}
\begin{rmk}[Boyer-Lindquist coordinates]
If $\tilde{h}\in \cA^{2-}(X;S^{2}\wt{\Tsc^*X})$, then the coefficients of $\tilde{h}$ in the Boyer-Lindquist representation fulfill 
\[\tilde{h}_{\alpha\beta}=\cO\left(\frac{1}{r^{2-\epsilon}}\right),\, \forall \epsilon>0.\]
Indeed near infinity we have $dt_*=dt-\frac{r^2+a^2}{\Delta_b}dr=dt-dr+\cO\left(\frac{1}{r}\right)$. 
\end{rmk}
\subsection{Gauge invariants}\label{gauge_inv}
The gauge invariants of linearized gravity on the Kerr spacetime have been completely classified in  \cite{2018PhRvL.121e1104A, 2019arXiv191008756A} (see Appendix \ref{prop6.5Ap} for further details).  We will mainly be interested in linearized vacuum perturbations with $\Phi_{2}=\Phi_{-2}=0$. By \cite[Corollary 3]{2018PhRvL.121e1104A} the only non-vanishing gauge invariants for such perturbations are those given by $\II_\xi$, $\II_\zeta$, which in our notation take the form 
\begin{align}
\label{Invxi}
\InvSymb_\xi={}&- \bhp (\varrho'\tho
 + \varrho\tho'
 -  \tau'\edt
 -  \tau\edt' )(\bhp^4\vartheta \Psi_{0})
 -  \tfrac{1}{2} \Psi_{0} \bhp^5 \vartheta \Psi_{0}\nonumber\\
& -  \tfrac{1}{2} \bar\Psi_{0} \overline{\bhp}^5 \overline{\vartheta \Psi}_{0}
 + \tfrac{3}{2} \Psi_{0} \bhp^5 (h_{nn} \varrho^2
 + 2 h_{\ell n} \varrho \varrho'
 + h_{\ell \ell} \varrho'^2\nonumber\\
&{}\hspace{2ex}{} - 2 h_{n\bar{m}} \varrho \tau
 - 2 h_{\ell\bar{m}} \varrho' \tau
 + h_{\bar{m} \bar{m}} \tau^2
 - 2 h_{nm} \varrho \tau'\nonumber\\
&{}\hspace{2ex}{} - 2 h_{\ell m} \varrho' \tau'
 + 2 h_{m\bar{m}} \tau \tau'
 + h_{mm} \tau'^2),
\end{align}
and, with $\bhp_+ = \bhp + \overline{\bhp}, \bhp_- = \bhp - \overline{\bhp}$,
\begin{align}
\label{Invzeta}
\InvSymb_\zeta={}&\tfrac{1}{4} \bhp \big( \bhp_-^2 (\varrho'\tho
 + \varrho\tho' ) -  \bhp_+^2 (\tau'\edt
  + \tau\edt' ) \big)(\bhp^4\vartheta \Psi_{0})\nonumber\\
& + \tfrac{1}{4} \Re\Bigl(\bhp^5 \vartheta \Psi_{0} \bigl(
 \Psi_{0} (\bhp^2 + \overline{\bhp}^2) - 2 \bar\Psi_{0} \overline{\bhp}^2
 - 4 \bhp (\bhp_- \varrho \varrho' \nonumber\\
 &{}\hspace{2ex}{} -\bhp_+ \tau \tau')\bigr)\Bigr)
 + 2i \Im\bigl(\bhp^6 \overline{\bhp} (\vartheta \Psi_{-1} \varrho \tau
 + \vartheta \Psi_{1} \varrho' \tau')\bigr)\nonumber\\
 & -  \tfrac{3}{8} \Psi_{0} \bhp^5 \bigl( 
 \bhp_-^2 (h_{nn} \varrho^2  +  2 h_{ln} \varrho\varrho'  + h_{ll} \varrho'^2)\nonumber\\
 &{}\hspace{2ex}{} -2 (\bhp^2 + \overline{\bhp}^2) (h_{n\bar{m}} \varrho \tau
   + h_{l\bar{m}} \varrho' \tau
  + h_{nm} \varrho \tau' + h_{lm} \varrho' \tau')\nonumber\\
 &{}\hspace{2ex}{} + \bhp_+^2 (h_{\bar{m} \bar{m}} \tau^2
  +  2 h_{m\bar{m}} \tau\tau' + h_{mm} \tau'^2) \bigr),
\end{align}
Recall that the Plebanski-Demianski family of line elements \cite{PD} are vacuum metrics of Petrov type D, parametrized by $\mfrak, \afrak, \nfrak, \cfrak$, that reduces to the Kerr family of line elements in case $\nfrak = \cfrak = 0$. The form of $\II_\xi$, $\II_\zeta$ for explicit $h$ defined by  perturbations with respect to $\dot \mfrak, \dot \afrak, \dot \nfrak, \dot \cfrak$ in the Plebanski-Demianski family of line elements  are given by (see \cite[Eq. (24)]{2018PhRvL.121e1104A}):
\begin{enumerate}
\item For pure mass $\dot{\bhm}$ and angular momentum $\dot{\bha}$ perturbations,
the invariants take the form
\begin{equation}
\label{Inv1}
\InvSymb_\xi=\dot{\bhm},\, \InvSymb_\zeta=2\bha^2\dot{\bhm}-3\bhm\bha\dot{\bha}.
\end{equation}
\item For perturbations in the direction of the NUT parameter $\dot{\bhn}$ we obtain 
\begin{equation}
\label{Inv2}
\InvSymb_\xi=-i\dot{\bhn}+\frac{2i\bhm}{\bar{p}}\dot{\bhn},\, \InvSymb_\zeta=-ia^2\dot{\bhn}+a\cos\theta\left(r-2\bhm-\frac{\bhm p}{\bar{p}}\right)\dot{\bhn}.
\end{equation}
\item For perturbations in the $c$ metric direction the invariants take the form :
\begin{equation}
\label{Inv3}
\InvSymb_\xi=\frac{6\bhm^2r\cos\theta}{\bar{p}}\dot{\bhc}+3\bhm(ia+(\bhm-r)\cos\theta)\dot{\bhc},\, \InvSymb_\zeta=\frac{6\bhm^2a^2r\cos^3\theta}{\bar{p}}\dot{\bhc}-3i\bhm a(p^2-r^2\cos^2\theta)\dot{\bhc}. 
\end{equation}

\end{enumerate}
It is a remarkable fact that for linearized vacuum perturbations with $\Phi_{2}=\Phi_{-2}=0$, the general form of $\InvSymb_\xi,\, \InvSymb_\zeta$ is, in fact, the one given by \eqref{Inv1}-\eqref{Inv3}. 
We  have
\begin{prop}[Aksteiner and B\"ackdahl \cite{Aksteiner:GI}]
\label{prop6.5}
Let $\dot g_{ab}$ be a vacuum type D perturbation on the Kerr background.  Then there exist parameters $\dot{\bhm},\, \dot{\bha},\, \dot{\bhc},\, \dot{\bhn}$ such that 
\begin{align*}
\InvSymb_\xi&=\dot{\bhm}-i\dot{\bhn}+\frac{2i\bhm}{\bar{p}}\dot{\bhn}+\frac{6\bhm^2r\cos\theta}{\bar{p}}\dot{\bhc}+3\bhm(i\bha+(\bhm-r)\cos\theta)\dot{\bhc},\\
\InvSymb_\zeta&=2\bha^2\dot{\bhm}-3\bhm\bha\dot{\bha}-i\bha^2\dot{\bhn}+\bha\cos\theta\left(r-2\bhm-\frac{\bhm p}{\bar{p}}\right)\dot{\bhn}\\
&+\frac{6\bhm^2\bha^2r\cos^3\theta}{\bar{p}}\dot{\bhc}-3i\bhm \bha(p^2-r^2\cos^2\theta)\dot{\bhc}.
\end{align*}
\end{prop}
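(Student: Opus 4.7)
The plan is to combine the classification of vacuum Petrov type D four-dimensional metrics via the Plebanski--Demianski family with the completeness of the gauge invariant system from \cite{2019arXiv191008756A}, and then invoke linearity together with the explicit single-parameter computations already recorded in \eqref{Inv1}--\eqref{Inv3}.

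First I would recall that, with vanishing cosmological constant, the Plebanski--Demianski line elements exhaust, up to local isometry, the vacuum Petrov type D solutions. Differentiating this four-parameter family at the Kerr point produces a four-dimensional space of linearized type D vacuum perturbations of $g_b$, spanned by the derivatives of the family with respect to $\bhm, \bha, \bhn, \bhc$. By construction these perturbations have vanishing extreme Teukolsky scalars, i.e.\ they annihilate $\vartheta\Psi_{\pm 2}$, and the invariants on each of them have been computed explicitly in \eqref{Inv1}--\eqref{Inv3}.

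Next I would appeal to the completeness theorem of \cite{2019arXiv191008756A}: on a Kerr (more generally, type D vacuum) background, the full system of gauge invariants classified there separates vacuum linear perturbations modulo pure gauge. When restricted to the sector $\boldsymbol{\Phi}_{\pm 2} = 0$, that system collapses to $\{\InvSymb_\xi, \InvSymb_\zeta\}$ by \cite[Corollary 3]{2018PhRvL.121e1104A}. Hence any vacuum type D perturbation $\dot g_{ab}$ agrees with some linearized Plebanski--Demianski perturbation $\dot g_{\mathrm{PD}}(\dot\bhm, \dot\bha, \dot\bhn, \dot\bhc)$ modulo an infinitesimal diffeomorphism. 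Since $\InvSymb_\xi$ and $\InvSymb_\zeta$ are gauge invariant and depend linearly on $\dot g$, their values on $\dot g_{ab}$ coincide with the values on $\dot g_{\mathrm{PD}}$, which by linearity in the four parameters and the single-parameter formulas \eqref{Inv1}--\eqref{Inv3} are exactly those stated.

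The main obstacle is the completeness step: one must verify that once $\boldsymbol{\Phi}_{\pm 2} = 0$ is imposed, the two invariants $\InvSymb_\xi, \InvSymb_\zeta$ alone determine a vacuum perturbation modulo gauge, i.e.\ that no further non-gauge degrees of freedom survive in the type D sector. This is precisely the content that Aksteiner--B\"ackdahl single out, and the argument proceeds by running the classification of \cite{2019arXiv191008756A} while keeping track of the reduction carried out in \cite[Corollary 3]{2018PhRvL.121e1104A}. Because the statement of Proposition~\ref{prop6.5} is purely pointwise and algebraic, the analytic issues that appear elsewhere in the paper (for instance the polynomial growth in $r$ of the $\dot\bhc$ mode) play no role here and are dealt with separately in the function-space analysis.
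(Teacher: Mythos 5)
Your argument has a genuine gap at the step ``Hence any vacuum type D perturbation $\dot g_{ab}$ agrees with some linearized Plebanski--Demianski perturbation modulo an infinitesimal diffeomorphism.'' Completeness of the invariant system gives \emph{injectivity}: two perturbations with the same invariants differ by pure gauge. It does not tell you that the \emph{image} of the map $\dot g\mapsto(\InvSymb_\xi,\InvSymb_\zeta)$, restricted to vacuum perturbations with $\boldsymbol{\Phi}_{\pm 2}=0$, is exhausted by the four-parameter family of values \eqref{Inv1}--\eqref{Inv3}. A priori these two invariants could be rather general functions on the spacetime compatible with the linearized field equations; pinning down their functional form is precisely the content of Proposition \ref{prop6.5} and cannot be extracted from the separation property alone. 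What your ``Hence'' silently uses is Wald's theorem \cite{Wa} (every vacuum perturbation with vanishing extreme Teukolsky scalars is a linearized Plebanski--Demianski metric plus pure gauge); but in this paper that statement is \emph{deduced from} Proposition \ref{prop6.5} together with Lemma \ref{lem:commut} (see the Remark following that lemma), so obtaining it from completeness as you do is circular within the paper's logic. Citing Wald's independent 1973 proof would close the argument, but that is a different route and is exactly the dependency the authors set out to reverse. Note also that differentiating the nonlinear Plebanski--Demianski family only produces \emph{some} type D perturbations; that the linearized condition $\vartheta\Psi_{\pm 2}=0$ admits no further non-gauge solutions is a rigidity statement that must be proved, not read off from the nonlinear classification.

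For comparison, the paper's proof is a direct computation. Under the hypotheses all gauge invariants of lower differential order vanish, and the linearized Bianchi identities then yield an explicit formula (Proposition \ref{propA.1}) for the gradient $\nabla_a\InvSymb_V$ in terms of the gauge-invariant $1$-form $\ImA_a$. Since $\ImA_a$ is a real gauge-invariant Killing field, it equals $A\xi_a+B\zeta_a$ for real constants $A,B$; substituting this ansatz and integrating the resulting first-order system in Boyer--Lindquist coordinates gives the general solution \eqref{eq:IIIAlgSpecSol}, which is then matched term by term against \eqref{Inv1}--\eqref{Inv3}. This derivation of differential constraints on $\InvSymb_\xi,\InvSymb_\zeta$ is the step your proposal would need to supply.
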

For completeness, we sketch the proof of Proposition \ref{prop6.5} in Appendix \ref{prop6.5Ap}. 

\begin{lemma} \label{lem:commut}
Let $v_1$ be a linearized vacuum metric perturbation on the Kerr exterior $\mathcal M$ with vanishing gauge invariants  and
$\dot \mfrak = \dot \afrak = \dot \nfrak = \dot \cfrak = 0$. Then there is a gauge vector field $v_0$ on $\mathcal M$ such that 
\begin{align} 
\delta^*_{g_b} v_0 = v_1
\end{align} 
\end{lemma}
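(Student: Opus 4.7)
The plan is to combine the completeness of the gauge invariants, as established in \cite{2019arXiv191008756A}, with a standard \v{C}ech-cohomological argument on the Kerr exterior $\mathcal{M}$. By Proposition \ref{prop6.5} (and the fact that $\vartheta\Psi_{\pm 2}=0$ in the type D case), a linearized vacuum perturbation on Kerr is determined, modulo the infinitesimal action of the diffeomorphism group, by the Plebanski-Demianski parameters $(\dot{\mfrak},\dot{\afrak},\dot{\nfrak},\dot{\cfrak})$ together with the gauge invariants $\InvSymb_\xi,\InvSymb_\zeta$. Since all of these vanish for $v_1$ by hypothesis, the completeness statement of \cite{2019arXiv191008756A} gives, for every sufficiently small open ball $U\subset\mathcal{M}$, a vector field $v_0^U$ on $U$ with $\delta_{g_b}^* v_0^U = v_1|_U$. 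This provides local existence; the remaining task is to patch these local primitives into a global one.

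Fix a good cover $\{U_i\}_{i\in I}$ of $\mathcal{M}$ by open sets whose finite intersections are contractible, and choose local primitives $v_0^{(i)}$ on each $U_i$. On any overlap $U_{ij}:=U_i\cap U_j$, the difference $w_{ij}:=v_0^{(i)}-v_0^{(j)}$ satisfies $\delta_{g_b}^* w_{ij}=0$, i.e.\ is a Killing vector of $g_b$ on $U_{ij}$. Since the Killing algebra of the Kerr metric is two-dimensional and spanned by $\pa_{t_*}$ and $\pa_{\varphi_*}$, any local Killing field extends uniquely to a global one (the Killing equation being an overdetermined ODE system with finite-dimensional solution space whose rank is already saturated by the two ambient symmetries). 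Thus $\{w_{ij}\}$ defines a \v{C}ech $1$-cocycle with values in the constant sheaf $\underline{\fK}$, where $\fK=\R\pa_{t_*}\oplus\R\pa_{\varphi_*}$.

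The Kerr exterior $\mathcal{M}$ deformation retracts onto $S^2$, so $H^1(\mathcal{M},\R)=0$, and hence $H^1(\mathcal{M},\underline{\fK})=0$. Therefore the cocycle $\{w_{ij}\}$ is a coboundary: there exist Killing vectors $k_i\in\fK$ on each $U_i$ with $w_{ij}=k_i-k_j$. Defining $v_0$ on $U_i$ by $v_0|_{U_i}:=v_0^{(i)}-k_i$ gives a globally consistent vector field with $\delta_{g_b}^* v_0=v_1$ on all of $\mathcal{M}$, as required. The main substantive input is the completeness result of \cite{2019arXiv191008756A}, which furnishes local primitives; once that is granted, the rest is a routine sheaf-theoretic patching, with the triviality of $H^1(\mathcal{M})$ ensuring there is no global obstruction.
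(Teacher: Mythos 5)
Your argument reaches the right conclusion but by a genuinely different route from the paper. The paper does not patch local primitives: it passes to the auxiliary quantity $v_1'=\Cop_1 v_1$, observes that the twisted complex $\Kop_\bullet'$ is just two copies of the scalar de Rham complex (the flat connection $\mathbb{D}$ makes the Killing fields parallel), solves $\Kop_0'v_0'=v_1'$ \emph{globally} by the ordinary Poincar\'e lemma on the simply connected exterior, and then recovers $v_0=\Dop_0 v_0'+\Hop_0 v_1$ from the explicit homotopy identity $\Dop_1\Cop_1=\id-\Kop_0\Hop_0-\Hop_1\Kop_1$. That route needs only $H^1_{\dR}(\cM)=0$ and the operators of \cite{2019arXiv191008756A}; it never has to analyze the sheaf of Killing fields. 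Your \v{C}ech patching instead uses the local exactness $\ker\Kop_1=\ran\Kop_0$ (which is indeed what the completeness/compatibility-complex result furnishes) plus the cohomological triviality of the sheaf of local Killing fields. The one soft spot is your justification that the transition Killing fields $w_{ij}$ form a cocycle in the \emph{constant} sheaf $\underline{\fK}$ with $\fK=\R\pa_{t_*}\oplus\R\pa_{\varphi_*}$: the parenthetical claim that the local solution space is ``already saturated by the two ambient symmetries'' is not an argument, since a priori the germ of the Killing algebra at a point can be strictly larger than the global algebra. What actually closes this gap is that $g_b$ is real analytic and $\cM$ is simply connected, so by Nomizu's extension theorem every local Killing field extends uniquely to a global one, and the global isometry algebra of subextreme Kerr with $\bha\neq 0$ is known to be exactly two-dimensional; with that supplied, the sheaf is constant, $H^1(\cM,\underline{\fK})\cong\fK\otimes H^1(\cM,\R)=0$ because $\cM$ retracts onto $\Sph^2$, and your coboundary correction goes through. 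So the proof is acceptable once the Killing-field extension step is properly attributed; the paper's homotopy-operator argument avoids this input entirely, at the cost of invoking the full diagram \eqref{diag:DiffComplex4D}.
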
 
\begin{rmk}
By the results of \cite{2019arXiv191008756A}, the two extreme Teukolsky scalars, linearized Ricci curvature and $\InvSymb_\xi$ and $\InvSymb_\zeta$ constitute a complete set of gauge invariants. It follows from Proposition \ref{prop6.5} that a {linearized vacuum} perturbation with $\boldsymbol{\Phi}_2=\boldsymbol{\Phi}_{-2}=0$ is locally a Plebanski-Demianski line element plus a pure gauge term. An application of Lemma \ref{lem:commut} makes this result global. We therefore obtain a new proof of the result of Wald that linearized vacuum perturbations of the Kerr metric with vanishing extreme Teukolsky scalars are Plebanski-Demianski line elements modulo gauge, see \cite{Wa}.
\end{rmk}

\begin{proof} 
As in \cite{2019arXiv191008756A}, let  $\tilde \Kop_1$ be the operator that sends a linearized metric on the Kerr background to the collection of its gauge invariants, as defined in \cite{2018PhRvL.121e1104A}. From the assumptions, 
\begin{align} 
\tilde \Kop_1 v_1 = 0.
\end{align} 
As shown in \cite[\S 5]{2019arXiv191008756A}, $\tilde \Kop_1$ is equivalent to $\Kop_1$ as defined in \cite[Eq. (4.26b)]{2019arXiv191008756A}, and hence 
\begin{align}\label{eq:v1K1} 
\Kop_1 v_1 = 0 .
\end{align} 
Let
\begin{align}\label{eq:K0def} 
\Kop_0 = \delta_{g_b}^*
\end{align}  denote the Killing operator on the Kerr background. Then, in order to prove the lemma, we must construct a solution to the equation
\begin{align} 
\Kop_0 v_0 ={}& v_1 \in \ker \Kop_1 .
\end{align} 
Let now $\Cop_l, \Dop_l,\Hop_l$, $l = 0,1$ be as in \cite[Def. 13]{2019arXiv191008756A}. In particular, these are local differential operators acting on sections of the bundles $V_l$, $V_l'$, $l=0,1,2$. We have that $V_0 = T^*\mathcal M$, $V_1$ is the space of symmetric 2-tensors, and $V_0'$ is the subbundle of $T^*\mathcal M$ with sections of the form 
\begin{align} 
\alpha \xi + \beta \zeta 
\end{align} 
where $\xi, \zeta$ are the Killing fields on $\mathcal M$, and $\alpha, \beta$ are scalar functions. Then the commuting diagram  \cite[Eq. (4.1)]{2019arXiv191008756A} is valid. The part of this diagram that is relevant for our purpose is 
\begin{equation} \label{diag:DiffComplex4D}
\begin{tikzcd}
 V_0 \ar[swap,shift right]{d}{\Cop_0} \arrow[r, "\Kop_0"]  \& 
 V_1 \ar[swap,shift right]{d}{\Cop_1} \arrow[r, "\Kop_1"] \arrow[l, dashed, bend left=20, "\Hop_0"]  \& V_2
\ar[swap,shift right]{d}{\Cop_2}
 \\
V'_0 \ar[swap,shift right]{u}{\Dop_0} \arrow[r, "\Kop'_0"'] \& 
V'_1 \ar[swap,shift right]{u}{\Dop_1} \arrow[r, "\Kop'_1"'] \arrow[l, dashed, bend right=20, "\Hop'_0"'] \& V_2' \ar[swap,shift right]{u}{\Dop_2}
\end{tikzcd}.
\end{equation} 
Here $\Kop_l'$ are defined in terms of a flat connection on $V_0'$, cf. 
\cite[\S 4]{2019arXiv191008756A}. The operators $\Cop_l, \Cop_l', \Dop_l, \Dop_l'$, $l=0,1,2$, and $\Hop_0, \Hop_0'$ are local differential operators acting on sections of the bundles $V_l, V_l'$, $l=0,1,2$, and the operators $\Kop_l'$ define a twisted deRham complex, 
\begin{align} 
\Kop_l' = d^{\mathbb{D}}_l
\end{align} 
defined in terms of the unique flat connection $\mathbb{D}$ on $V_0'$ such that the Killing fields are parallel with respect to $\mathbb{D}$.  In particular $\Kop_0'$ is defined by the restriction of $\Kop_0$ to $V_0'$, i.e. 
\begin{align} 
\Kop_0' (\alpha \xi + \beta \zeta) ={}& d^{\mathbb{D}} (\alpha \xi + \beta \zeta) \nonumber \\
={}& (d\alpha)  \xi + (d \beta)  \zeta \label{eq:Kopd}
\end{align} 
cf. \cite[Eq. (4.2)]{2019arXiv191008756A}. Thus, $\Kop_0'$ acts on $V_0'$ as two copies of the exterior derivative on scalars. 
 
From \eqref{diag:DiffComplex4D}, we have the identities 
\begin{align} 
\Kop_0 \circ \Dop_0 ={}& \Dop_1 \circ \Kop_0' \label{eq:comm0}, \\ 
\Kop_1' \circ \Cop_1 ={}& \Cop_2 \circ \Kop_1 \label{eq:comm1}. 
\end{align} 
Further, we have the homotopy identity  \cite[Eq. (2.4a)]{2019arXiv191008756A},
\begin{align}\label{eq:homotop} 
\Dop_1 \circ \Cop_1 = \id - \Kop_0 \circ \Hop_0 - \Hop_1 \circ \Kop_1
\end{align} 
Let 
\begin{align} \label{eq:v1'def}
v_1' = \Cop_1 v_1.
\end{align} 
By \eqref{eq:v1K1} and \eqref{eq:comm1}, 
\begin{align} 
\Kop_1' v_1' = 0, 
\end{align} 
and hence since the twisted deRham complex with operators $\Kop_j'$ is exact, the equation 
\begin{align} \label{eq:v0v1}
\Kop_0' v_0' = v_1' 
\end{align} 
has local solutions. In view of \eqref{eq:Kopd}, and the fact that the Kerr exterior $\mathcal{M}$ is simply connected, we may apply the Poincar\'e Lemma to conclude that the equation \eqref{eq:v0v1} has a global solution $v_0'$. 

By \eqref{eq:comm0}, we have 
\begin{align} 
\Kop_0 \Dop_0 v_0' ={}& \Dop_1 \Kop_0' v_0' \\
\intertext{use \eqref{eq:v1'def}}
={}& \Dop_1 \Cop_1 v_1 \\ 
\intertext{use \eqref{eq:homotop} and $\Kop_1 v_1 = 0$}
={}& v_1 - \Kop_0 \Hop_0 v_1. 
\end{align} 
This means that setting 
\begin{align} 
v_0 =  \Dop_0 v_0' +  \Hop_0 v_1 
\end{align} 
gives a solution to 
\begin{align} 
\Kop_0 v_0 = v_1
\end{align} 
on $\mathcal M$. By construction, $v_0$ is globally defined. 
\end{proof} 

We now want to show that the parameters $\dot{\mathfrak n}$ and $\dot{\mathfrak c}$ are zero. This will follow from the

\begin{prop}
\label{propdecayInv}
\begin{enumerate}
\item Let $g_0\in C^{\infty}(\partial X;S^{2}\wt{\Tsc_{\partial X}^*X})$, $h=\frac{g_0}{r}$. Then we have 
\begin{equation*}
\InvSymb_\zeta=\cO(r),\, {\rm Re }\, \InvSymb_\zeta=\cO(1).
\end{equation*} 
\item If $h=\cO\left(\frac{1}{r^{1+\epsilon}}\right)$, then $\InvSymb_\zeta=\cO(r^{1-\epsilon})$. 
\end{enumerate}
\end{prop}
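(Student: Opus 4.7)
The approach is a direct term-by-term estimate of the explicit expression \eqref{Invzeta} for $\InvSymb_\zeta$, tracking powers of $r$ as $r\to\infty$. The relevant background asymptotics in the Kinnersley frame on Kerr are: $\bhp, \bhp_+ = \cO(r)$ and $\bhp_- = -2i\bha\cos\theta = \cO(1)$; $\Psi_0 = -\bhm/\bhp^3 = \cO(r^{-3})$; the Cartesian components of $\ell^a, n^a$ are $\cO(1)$ while $m^a = \cO(r^{-1})$; and the spin coefficients satisfy $\varrho, \varrho' = \cO(r^{-1})$, $\tau, \tau' = \cO(r^{-2})$. Under the hypothesis that $h$ has the form $g_0/r$ in the scattering basis $\{d t_*, d x^i\}$ (respectively $h = \cO(r^{-1-\epsilon})$ in case (2)), the tetrad components satisfy $h_{\ell\ell}, h_{\ell n}, h_{nn} = \cO(r^{-1})$, $h_{\ell m}, h_{nm}$ etc.\ $= \cO(r^{-2})$, and $h_{mm}, h_{m\bar m}, h_{\bar m\bar m} = \cO(r^{-3})$, with the obvious $r^{-\epsilon}$ improvements in case (2).

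From the definition $\vartheta\Psi_0 = \tfrac12 \nabla_{(A}^{A'} \nabla_{B}^{B'} h^{tf}_{CD)A'B'} + \tfrac14 \slashed{h}\,\Psi_0$ and the analogous expressions for $\vartheta\Psi_{\pm 1}$ (in which the $\slashed h\,\Psi_k$ correction vanishes since $\Psi_k = 0$ in the background for $k\neq 0$), projection in the Kinnersley tetrad together with $\partial^2 h = \cO(r^{-3})$ (respectively $\cO(r^{-3-\epsilon})$) in Cartesian components yields $\vartheta\Psi_0 = \cO(r^{-5})$ and $\vartheta\Psi_{\pm 1} = \cO(r^{-4})$, with the corresponding $r^{-\epsilon}$ improvements in case (2). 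The GHP operators $\tho = \ell^a \Theta_a$ and $\tho' = n^a\Theta_a$ preserve the $r$-order of their argument up to one extra inverse power (the tangential vectors $\ell, n$ are $\cO(1)$ and the connection coefficients entering $\Theta$ decay at worst as $r^{-1}$), while $\edt = m^a\Theta_a$ and $\edt' = \bar m^a\Theta_a$ gain an additional factor of $r^{-1}$ from the tetrad vector.

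Plugging into \eqref{Invzeta}, each of its four structural pieces — the GHP-derivative piece $\bhp(\bhp_-^2(\varrho'\tho + \varrho\tho') - \bhp_+^2(\tau'\edt + \tau\edt'))(\bhp^4\vartheta\Psi_0)$, the manifestly real piece $\Re(\bhp^5 \vartheta\Psi_0(\cdots))$, the manifestly imaginary piece $2i\Im(\bhp^6 \bar{\bhp}(\vartheta\Psi_{-1}\varrho\tau + \vartheta\Psi_1\varrho'\tau'))$, and the quadratic-in-$h$ piece $\Psi_0 \bhp^5 (\cdots)$ — is bounded separately by power counting, giving $\InvSymb_\zeta = \cO(r)$ in case (1) and $\InvSymb_\zeta = \cO(r^{1-\epsilon})$ in case (2). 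For the refined assertion $\Re\InvSymb_\zeta = \cO(1)$, the essential observation is that the third summand is by construction $2i$ times a real quantity and therefore annihilated by $\Re$; the remaining three summands are real-valued and each individually bounded by $\cO(1)$ via the same accounting. The main obstacle is the bookkeeping over the several dozen scalar monomials in \eqref{Invzeta}; a useful consistency check is that every monomial must have total boost- and spin-weight zero, matching the scalar invariance of $\InvSymb_\zeta$.
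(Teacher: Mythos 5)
Your overall strategy --- direct power counting of the explicit formula \eqref{Invzeta} using the background decay rates --- is the same as the paper's, but two of your key intermediate estimates are wrong, and the error lands precisely on the one claim that requires more than power counting, namely $\Re\,\InvSymb_\zeta=\cO(1)$. First, for $h=g_0/r$ one has $\vartheta\Psi_i=\cO(r^{-3})$ for all $i$, not $\cO(r^{-5})$ resp.\ $\cO(r^{-4})$: the Cartesian components of $m^a$ are $\cO(1)$, not $\cO(r^{-1})$ (e.g.\ $m^a\sim\bar\bhp{}^{-1}\partial_\theta$ and $\partial_\theta$ has Cartesian components of size $r$), so all tetrad components $h_{\ell\ell},\dots,h_{mm}$ are $\cO(r^{-1})$ and contracting the linearized Weyl tensor ($\cO(r^{-3})$ in Cartesian components) with the tetrad loses nothing. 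The linearized Kerr metric itself is a counterexample to your rates: $\dot g_b(\dot\bhm,0)=\cO(r^{-1})$ while $\vartheta\Psi_0=-\dot\bhm/\bhp^3=\cO(r^{-3})$.

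Second, with the correct rates the second summand of \eqref{Invzeta} is $\cO(r)$ by naive counting ($\bhp^5\vartheta\Psi_0=\cO(r^2)$ times a bracket that is $\cO(r^{-1})$), and it is manifestly \emph{real}, being of the form $\tfrac14\Re(\cdots)$; so observing that the third summand is purely imaginary does not rescue $\Re\,\InvSymb_\zeta=\cO(1)$. (Your assertion that the first, second and fourth summands are real-valued is also false for the first and fourth.) The paper closes this gap with two structural inputs absent from your proposal: (a) the algebraic cancellation $\Psi_0(\bhp^2+\bar\bhp{}^2)-2\bar\Psi_0\bar\bhp{}^2-4\bhp(\bhp_-\varrho\varrho'-\bhp_+\tau\tau')=4r^2 i\,\Im\Psi_0+4i\afrak\cos\theta\,r^{-1}+\cO(r^{-2})$, which together with the background fact $\Im\Psi_0=\cO(r^{-4})$ shows that the leading $\cO(r^{-1})$ part of the bracket is purely imaginary; and (b) the refined estimate $\Im\,\vartheta\Psi_0=\cO(r^{-4})$, one order better than $\vartheta\Psi_0$ itself, which the paper proves separately by showing $\dot R_{tr\theta\phi}=\cO(r^{-2})$ from the coordinate expression of the linearized Riemann tensor. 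Only the combination of (a) and (b) makes the real part of the second summand $\cO(1)$. Without these, your argument (once the rates are corrected) still yields $\InvSymb_\zeta=\cO(r)$ and part (2), but not $\Re\,\InvSymb_\zeta=\cO(1)$, which is exactly the estimate needed later to kill the NUT parameter.
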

The proof of the above proposition can be found in Appendix \ref{prop6.8Ap}.

\subsection{Proof of Theorem \ref{ThmMS}}
We start with the $\sigma\neq 0$ case. Let $\xi=\partial_{t_*}$ be the stationary Killing field in Kerr.  If $\sigma \ne 0$, then since $\dot g$ is a mode solution by assumption, we have that 
\begin{align} 
\label{Lie1}
\Lie_\xi \dot g = -i \sigma \dot g
\end{align} 
By \cite[Eq. (1)]{2016arXiv160106084A}  we know that there exists a 1-form $\omega$ and a symmetric $2-$tensor $k$ which vanishes when $\hat{\Psi}_{[\pm 2]}$ vanish such that 
\begin{align}
\label{Lie2}
k=\delta_{g_b}^*\omega+\Lie_\xi \dot g. 
\end{align}  
Applying Theorem \ref{Thm3.1} we see that $\hat{\Psi}_{[-2]}=\hat{\Psi}_{[2]}=0$ and therefore \eqref{Lie1} and \eqref{Lie2} give 
\begin{align*}
\dot g=\frac{i}{\sigma}\delta_{g_b}^*\omega.
\end{align*}
$\dot g$ is pure gauge in this case. In particular \eqref{EqMSLinDec} holds with $\mathfrak{\dot m} = \mathfrak{\dot a} = 0$. 

Let us now consider the case $\sigma=0$. Again by Theorem \ref{Thm3.1} we know that $\hat{\Psi}_{[\pm 2]}=0$, i.e. $\dot{g}$ is a type D perturbation. By Proposition \ref{prop6.5}, we know that the gauge invariants $\InvSymb_\xi,\,  \InvSymb_\zeta$ are those of the Plebanski-Demianski line element. We now apply  Lemma \ref{lem5.1} and Proposition \ref{propdecayInv} to see that the parameters $\dot{\bhn}$ and $\dot{\bhc}$ have to be zero. By completeness of the gauge invariants (extreme Teukolsky scalars, linearized Ricci, $\InvSymb_\xi,\,  \InvSymb_\zeta$, see \cite{2019arXiv191008756A}), we know that the linearized metric writes locally as
\begin{align}
\label{5.11}
\dot g=\dot g_b(\dot \bhm,\dot\bha)+\delta^*_{g_b}\omega
\end{align}
for some suitable $\omega$. Applying Lemma \ref{lem:commut} to $\dot g-\dot g_b(\dot \bhm,\dot\bha)$ gives \eqref{5.11} globally on the manifold. This completes the proof of the theorem. 
\qed

\section{The gauge fixed linearized Einstein operator}
\label{Sec7}
In this section we present our results on the mode analysis of the linearized Einstein operator around the Kerr metric in the wave map/De Turck gauge. The results are analogous to those obtained in the small $\bha$ case in \cite{HHV}. Recall that the linearized Einstein operator around the Kerr metric in the wave map/De Turck gauge is given by
\begin{align*}
L_{g_b}:=2(D_{g_b}{\rm Ric}+\delta^*_{g_b}\delta_{g_b}G_{g_b}).
\end{align*}
Here $G_{g_b}={\bf 1}-\frac{1}{2}g_b {\rm tr}_{g_b}$ denotes the trace reversal operator in $4$ spacetime dimensions. In this gauge fixed setting a zero mode solution of $\wh{L_{g_b}}(0)h=0$ writes again as $h=\dot{g}_{(\bhm,\bha)}(\dot{\bhm},\dot{\bha})+\delta_{g_b}^*\omega$, but the pure gauge term now has to lie in a fixed $7$ dimensional space. These gauge solutions correspond to
\begin{enumerate}
\item  the Coulomb solutions of the $1-$form wave operator, representative of a residual gauge freedom,
\item asymptotic translations in space and asymptotic rotations, representatives of symmetries in flat space. 
\end{enumerate}
To parametrize the asymptotic rotations correctly, we will allow perturbations in $\dot{\bhvecta}\in \R^3$, thus including changes in the axis of rotation. Note however that solutions of the form $\dot{g}_b(\dot{\bhm},\dot{\bhvecta}^{\perp})$, where $\dot{\bhvecta}^{\perp}$ is orthogonal to the axis of rotation are pure gauge solutions : they merely describe the same Kerr black hole with rotation axis rotated infinitesimally. On the other hand, $\dot{g}_b(\dot{\bhm},\dot{\bhvecta}^{\parallel})$ where $\dot{\bhvecta}^{\parallel}$ is parallel to the axis of rotation have to be considered as gauge independent solutions (a gauge term nevertheless has to be added to make them solutions of the gauge fixed equation). As we will also see in the following the mass perturbation $\dot{\bhm}$ has in fact to be equal to zero.   
We start our analysis with the Fredholm setting. 
\begin{thm}
\label{ThmOp}
  There exists $m_2>0$ with the following property. Suppose that $m>m_2$ and $q<-\half$ with $m+q>-\half$. Then for any fixed $C>1$, and $m_0<m$, $q_0<q$, there exists a constant $C'>0$ such that
    \begin{equation}
    \label{EqOpFinite}
      \Vert u\Vert_{\Hbext^{m,q}} \leq C'\Bigl(\left\Vert \wh{L_{g_b}}(\sigma)u\right\Vert_{\Hbext^{m-1,q+2}} + \Vert u\Vert_{\Hbext^{m_0,q_0}} \Bigr)
    \end{equation}
    for all $\sigma\in\C$, $\Im\sigma\in[0,C]$, satisfying $C^{-1}\leq |\sigma|\leq C$. If $q\in(-\tfrac32,-\half)$, then this estimate holds uniformly down to $\sigma=0$, i.e.\ for $|\sigma|\leq C$.
  Moreover, the operators
  \begin{subequations}
  \begin{alignat}{3}
  \label{EqOpFredS}
    \wh{L_{g_b}}(\sigma)&\colon\{u\in\Hbext^{m,q}(X)\colon\wh{L_{g_b}}(\sigma)u\in\Hbext^{m-1,q+2}(X)\}&&\to\Hbext^{m-1,q+2}(X), \quad \Im\sigma\geq 0,\ \sigma\neq 0, \\
  \label{EqOpFred0}
    \wh{L_{g_b}}(0) &\colon \{u\in\Hbext^{m,q}(X)\colon\wh{L_{g_b}}(0)u\in\Hbext^{m-1,q+2}(X)\}&&\to \Hbext^{m-1,q+2}(X)
  \end{alignat}
  \end{subequations}
  are Fredholm operators of index $0$.
\end{thm}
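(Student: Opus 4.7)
The plan is to follow the microlocal/b-geometric framework of Vasy \cite{Va1,Va2,VasyMicroKerrdS} as implemented for Kerr in \cite{HHV}, and observe that the ingredients required are insensitive to the size of $\bha$ in the subextreme range. Since $L_{g_b}$ is principally scalar — its principal symbol is that of $\Box_{g_b}$ tensored with the identity on symmetric 2-tensors — the microlocal estimates for $\wh{L_{g_b}}(\sigma)$ are exactly those for $\wh{\Box_{g_b,2}}(\sigma)$ modulo subprincipal terms, so the scheme from Theorem~\ref{Thm0} applies verbatim at the level of energy and propagation estimates. What must be checked in each case is that the dynamical structure underlying each estimate persists for all $|\bha|<\bhm$.

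First, I would assemble the local microlocal estimates on $X$: elliptic regularity in the b-calculus away from the characteristic set, real principal type propagation along the Hamilton flow of the dual metric function, radial point estimates at the generalized radial sets over $\pa_-\cX$ (future event horizon) and over $\pa_+ X$ (scri), and trapping estimates in a neighborhood of the trapped set. The radial point structure at the horizon, and the saddle structure at scri, depend only on the surface gravity and on the asymptotically flat geometry, both of which vary smoothly and non-degenerately with $\bha\in(-\bhm,\bhm)$; the trapping is $r$-normally hyperbolic for all subextreme $\bha$ by Dyatlov \cite{Dy}, allowing the losslessness of trapping estimates in the relevant regime $\Im\sigma\geq 0$. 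Combining these yields the semiclassical high-energy estimate and, for $|\sigma|$ bounded away from $0$, the finite-frequency estimate~\eqref{EqOpFinite}.

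Next, for the uniform behavior down to $\sigma=0$, I would use Vasy's second microlocal scattering-b calculus \cite{Va1,Va2}. The crucial input is the invertibility of the normal operator of $\wh{L_{g_b}}(0)$ at $\pa_+X$. By Lemma~\ref{LemmaKStNormal} this normal operator coincides with that of $\wh{\Box_{\ubar g,2}}(0)$, i.e.\ with $-\Delta$ on $\R^3$ acting component-wise in the trivialization $\{d t_*,dx^i\}$ of $\wt\Tsc{}^*X$; the Mellin-transformed indicial family has roots at $\lambda=il,-i(l+1)$, which lie off the critical line $\Im\lambda=-(q+3/2)$ precisely when $q\in(-\tfrac32,-\tfrac12)$. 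Combining the normal operator invertibility with the b-microlocal estimates of the previous paragraph gives \eqref{EqOpFinite} uniformly for $|\sigma|\leq C$ in that weight range, and hence the Fredholm property of the operators~\eqref{EqOpFredS}--\eqref{EqOpFred0}.

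Finally, the index is computed by deformation. Since the estimate~\eqref{EqOpFinite} and its adjoint analogue hold uniformly along the one-parameter family $b_t=(\bhm,t\bha)$, $t\in[0,1]$ (the domain $\supp h,\supp f$ is unaffected thanks to the choice~\eqref{eqr0} which persists when $\bha$ is decreased), and since the Fredholm index is invariant under norm-continuous deformations of a Fredholm family, $\ind\wh{L_{g_b}}(\sigma)=\ind\wh{L_{g_{b_0}}}(\sigma)$. For Schwarzschild the index is shown to be $0$ in \cite[Theorem 4.3]{HHV}, and a parallel deformation in $\sigma$ (keeping $\Im\sigma\geq 0$) reduces the $\sigma\neq 0$ case to the $\sigma=0$ case. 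The main obstacle is bookkeeping: verifying that the threshold conditions at the radial points (which produce the condition $m>m_2$) and the weight conditions at scri (which force $q<-\tfrac12$, $m+q>-\tfrac12$) can be chosen uniformly in $(\bha,\sigma)$ on compact sets, and that the scattering-b calculus estimates glue correctly across the transition region $\sigma\to 0$ as in \cite{Va1}; both are routine given the $\bha$-independence of the dynamical structure, but they require careful tracking of the threshold quantities along the deformation.
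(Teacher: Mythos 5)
Your overall architecture — elliptic/real-principal-type/radial-point estimates in Vasy's framework, the normal operator and indicial roots at $\pa_+X$ for the uniform low-energy estimate, and a deformation argument for the index — is the same as the paper's, but two of your specific choices diverge from it. For the index, you deform the tensorial operator in $\bha$ down to Schwarzschild and cite \cite{HHV}; the paper instead, at fixed $b$, deforms $\wh{L_{g_b}}(0)$ to a diagonal $10\times 10$ matrix of scalar operators $\wh{\Box_{g_b}}(0)$, observing that the off-diagonal entries lie in $\rho^2\Diffb^1$ and hence do not change the domain $\{u\in\Hbext^{m,q}:\wh{L_{g_b}}(0)u\in\Hbext^{m-1,q+2}\}$, and then invokes the invertibility of the scalar operator from Theorem~\ref{Thm0} (which is where the deformation in $\bha$ actually happens, after restriction to fixed azimuthal modes precisely so that the Kerr--Schwarzschild difference becomes first order). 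The paper's route buys a clean answer to the question your route leaves open: why the family is norm-continuous as a family of Fredholm operators when the domain itself is defined through the operator. Your route can be repaired by working on the fixed resolved scattering-b spaces of \cite{Va2}, but as written it skips this point. Also note that your invocation of trapping and $r$-normal hyperbolicity \cite{Dy} is not needed here: the theorem only concerns $|\sigma|\leq C$, and the paper explicitly postpones high-energy estimates to future work.

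Two concrete soft spots. First, you assert that the estimates for $\wh{L_{g_b}}(\sigma)$ are those for the scalar operator ``modulo subprincipal terms'' and then never verify that those terms are harmless. This matters exactly at the scattering radial sets $\cR_{\sigma,\rm in/out}$: the threshold weight $q=-\half$ in the hypotheses is only correct if the skew-adjoint part $\sigmasc_1\bigl(\frac{1}{2 i\rho}(\wh{L_{g_b}}(\sigma)-\wh{L_{g_b}}(\sigma)^*)\bigr)$ vanishes there, which the paper proves by choosing the fiber inner product on $S^2\,\wt{\Tsc^*}X$ making $d t^2$, $2\,d t\,d x^i$, $d x^i d x^j$ orthonormal, so that the Minkowski model is a diagonal matrix of scalar wave operators and the Kerr correction does not affect the subprincipal symbol at $\cR_{\sigma,\rm in/out}$ (Lemma~\ref{LemmaKStNormal}). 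Without this step the stated weight range is unjustified for the tensorial operator. Second, your ``parallel deformation in $\sigma$'' reducing $\sigma\neq 0$ to $\sigma=0$ is problematic: the $\sigma=0$ operator lives on different weighted spaces ($q\in(-\tfrac32,-\half)$ versus $q<-\half$, $m+q>-\half$), and the family is not norm-continuous across $\sigma=0$ on the fixed spaces of~\eqref{EqOpFredS}. Either deform within $\{\Im\sigma\geq 0,\ \sigma\neq 0\}$ and use invertibility for $\Im\sigma\gg 0$, or do as the paper does and run the diagonal-deformation argument separately for $\sigma\neq 0$ using the invertibility of $\wh{\Box_{g_b}}(\sigma)$ from Theorem~\ref{Thm0}.
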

\begin{rmk}
Under the same hypotheses as for \eqref{EqOpFinite} we also have the estimate 
\begin{equation}
\label{Fredholm2}
 \Vert u\Vert_{\Hbext^{m,q}} \leq C''\Bigl(\left\Vert \wh{L_{g_b}}(\sigma)u\right\Vert_{\Hbext^{m,q+1}} + \Vert u\Vert_{\Hbext^{m_0,q_0}} \Bigr).
\end{equation}
In fact both estimates \eqref{EqOpFinite} and \eqref{Fredholm2} follow from a more precise estimate using resolved scattering-b Sobolev spaces, see \cite{Va1} and \cite{Va2}. A similar remark holds for the Teukolsky operator and the scalar and $1-$form wave operators. 
\end{rmk}
\begin{proof}
The proof is analogous to the proof of \cite[Theorem 4.3]{HHV}, we recall here the principal ingredients.  It relies in a crucial manner on the properties of the Hamiltonian flow. The principal features of this flow remain unchanged also in the large $\bha$ case. In particular the discussion in \cite[Section 3.4]{HHV} on the flow of the classical symbol remains unchanged in the large $\bha$ case. We refer to \cite{Va1}, \cite{Ga} and \cite{M2} for details of the calculation of the flow. We use the notations introduced in \cite[Section 3.4]{HHV}. In particular the set of radial points which lie in the conormal bundle of the event horizon are called $\cR^{\pm}$ and those at infinity $\cR_{\sigma,in/out}$. 

We first have to consider radial point estimates. At the horizon radial point estimates require the calculation of threshold regularity. The existence of the threshold regularity at the horizon follows from the fact that $\tilde{\beta}$ as defined on page 404 of \cite{Va1} has a maximum and a minimum on the compact set $L_{\pm}$.\footnote{Note that the requirement $\tilde{\beta}>0$ on $L_{\pm}$ in \cite{Va1} is only formulated for notational convenience.} 
This gives the threshold regularity $m_2$ in our theorem.

  For $0\neq\sigma\in\R$, radial point estimates at infinity for $\wh{L_{g_b}}(\sigma)$ similarly require the computation of a threshold decay rate relative to $L^2(X)$. Concretely, the threshold $-\half$ from \cite[Propositions~9 and 10]{MelroseEuclideanSpectralTheory}, \cite{VasyZworskiScl}, \cite[Theorems~1.1 and 1.3]{Va1} is modified by the subprincipal symbol $\sigmasc_1(\frac{1}{2 i\rho}(\wh{L_{g_b}}(\sigma)-\wh{L_{g_b}}(\sigma)^*))|_{\cR_{\sigma,\rm in/out}}$; we now argue that this symbol vanishes. Indeed, formally taking $b=(0,0)$, so $g_b=\ubar g$ is the Minkowski metric, and working in the trivialization of $S^2\,\wt{\Tsc^*}X$ given in terms of the differentials of standard coordinates $t,x^1,x^2,x^3$, the operator $L_{g_b}$ is the wave operator on Minkowski space acting on symmetric 2-tensors, hence a $10\times 10$ diagonal matrix of scalar wave operators, and therefore the subprincipal symbol vanishes when using the fiber inner product on $S^2\,\wt{\Tsc^*}X$ which makes $d t^2$, $2\,d t\,d x^i$, $d x^i\,d x^j$ orthonormal. Changing from the Minkowski metric to a Kerr metric does not affect the subprincipal symbol at $\cR_{\sigma,\rm in/out}$, as already argued in the proof of \cite[Theorem 4.3]{HHV}.   
  Combining the radial point estimates at infinity from \cite{Va1} with those at the event horizon from \cite{VasyMicroKerrdS} (see also \cite[Proposition~2.1]{HintzVasySemilinear}), gives the stated uniform estimates for $\Im\sigma\in[0,C]$, $C^{-1}\leq|\sigma|\leq C$ for any fixed $C>1$. The uniformity of the stated estimate down to $\sigma=0$ is proved in \cite[Proposition~5.3]{Va2}; this uses the invertibility of a model operator, see \cite[\S5]{Va2}, which in the current setting and in the standard coordinate trivialization of $S^2\,\wt{\Tsc^*}X$ is the $10\times 10$ identity matrix tensored with the scalar model operator discussed  in \cite[Proposition~5.4]{Va2}.

It remains to prove that $\wh{L_{g_b}}(\sigma)$ has index $0$ as stated in~\eqref{EqOpFredS}--\eqref{EqOpFred0}. We use a deformation argument, which reduces the index $0$ property of $\wh{L_{g_b}}(\sigma)$ to that of the Fourier-transformed \emph{scalar} wave operator. 
  
  We first treat the case $\sigma=0$: choose a global trivialization of $S^2\,\wt{\Tsc^*}X$, then $\wh{L_{g_b}}(0)$ is a $10\times 10$ matrix of scalar operators in $\rho^2\Diffb^2(X)$, with the off-diagonal operators lying in $\rho^2\Diffb^1(X)$. Since adding an element of $\rho^2\Diffb^1$ to $\wh{L_{g_b}}(0)$ does not change the domain in~\eqref{EqOpFred0}, we can continuously deform $\wh{L_{g_b}}(0)$ within the class of Fredholm operators on the spaces in~\eqref{EqOpFred0} to a diagonal $10\times 10$ matrix with all diagonal entries equal to the scalar wave operator at zero energy, $\wh{\Box_{g_b}}(0)$; the latter operator known to be invertible by Theorem~\ref{Thm0}; in particular, it has index $0$. Thus, $\wh{L_{g_b}}(0)$ has index $0$ as well. The index zero property for $\sigma\neq 0$ follows from the same kind of deformation argument using the invertibility of $\hat{\Box}_g(\sigma)$.  \end{proof}

\begin{rmk}
\begin{enumerate}
\item As has been proven by Dyatlov \cite{Dy} the trapping remains r-normally hyperbolic in the whole subextreme range of $\bha$ so that \cite{DyatlovSpectralGaps}, \cite{WZ} apply. We therefore expect that the high energy estimates of  \cite[Theorem 4.3]{HHV} also remain valid in the whole range of $\bha$. We however postpone this aspect (which is not needed for the mode analysis) to future work. 
\item The threshold regularity at $\cR^{\pm}$ has been calculated in detail for Schwarz\-schild--de~Sitter metrics in~\cite{HV}. As already mentioned in \cite{HHV} the same calculation can be carried out for the Kerr spacetime in the whole subextreme range of $\bha$ and also gives the threshold regularity  $5/2$. For the purpose of this paper we however don't need the exact value of the threshold regularity and therefore avoid this rather lengthly calculation. 

\end{enumerate}
\end{rmk}

We will need the following definition :
\begin{definition}
\label{def7.1}
Given two lorentzian metrics $g,g^0$, we define the gauge $1-$form $\Upsilon$ by   
\begin{align*}
\Upsilon(g;g^0):=g(g^0)^{-1}\delta_gG_gg^0. 
\end{align*}
\end{definition}

\begin{thm}
\label{PropL0}
Let $\bha\bhm\neq 0$ and $m_2$ as in Theorem \ref{ThmOp}.
    \begin{enumerate}
  \item For $\Im\sigma\geq 0$, $\sigma\neq 0$, the operator
    \begin{align*}
      \wh{L_{g_{b}}}(\sigma)\colon&\{\omega\in \Hbext^{m,q}(X;S^2\,\wt\Tsc{}^*X)\colon\wh{L_{g_{b}}}(\sigma)\omega\in\Hbext^{m-1,q+2}(X;S^2\,\wt\Tsc{}^*X)\} \\
        &\qquad \to\Hbext^{m-1,q+2}(X;S^2\,\wt\Tsc{}^*X)
    \end{align*}
    is invertible when $m>m_2$, $q<-\half$, $m+q>-\half$.
  \item For $m>m_2$ and $q\in(-\tfrac32,-\half)$, the zero energy operator
    \begin{equation}
    \label{EqL0Op}
    \begin{split}
      \wh{L_{g_{b}}}(0)\colon&\{\omega\in\Hbext^{m,q}(X;S^2\,\wt\Tsc{}^*X)\colon\wh{L_{g_{b}}}(0)\omega\in\Hbext^{m-1,q+2}(X;S^2\,\wt\Tsc{}^*X)\} \\
        &\qquad \to\Hbext^{m-1,q+2}(X;S^2\,\wt\Tsc{}^*X)
    \end{split}
    \end{equation}
    has 7-dimensional kernel and cokernel.
  \end{enumerate}
  
  Concretely,   
  \begin{subequations}
  \begin{alignat}{3}
  \label{EqL0Ker}
    &\ker\wh{L_{g_b}}(0) \cap \Hbext^{\infty,-1/2-}& &=&\ \la h_{b,s 0}\ra \oplus \{ h_{b,v 1}(\vect)\colon\vect\in\vectspace_1 \} \oplus \{ h_{b,s 1}(\scal)\colon \scal\in\scalspace_1 \}, \\
  \label{EqL0KerAdj}
    &\ker\wh{L_{g_b}}(0)^* \cap \Hbsupp^{-\infty,-1/2-}& &=& \la h_{b,s 0}^*\ra \oplus \{ h_{b,v 1}^*(\vect) \colon \vect\in\vectspace_1 \} \oplus \{ h_{b,s 1}^*(\scal)\colon \scal\in\scalspace_1 \},
  \end{alignat}
  \end{subequations}
with 
  \begin{subequations} \label{eq:hdel*}
  \begin{alignat}{3}
  \label{EqL0s0}
    h_{b,s 0} &= \delta_{g_b}^*\omega_{b,s 0}, & h_{b,s 0}^* &= \sfG_{g_b}\delta_{g_b}^*\omega_{b,s 0}^*, \\
  \label{EqL0s1}
    h_{b,s 1}(\scal) &= \delta_{g_b}^*\omega_{b,s 1}(\scal),\quad & h_{b,s 1}^*(\scal) &= \sfG_{g_b}\delta_{g_b}^*\omega_{b,s 1}^*(\scal), \\
  \label{EqL0v1}
    h_{b,v 1}(\vect)&=\dot g_b(0,\dot \bhvecta)+\delta_{g_b}^*\omega,\quad & h_{b,v 1}^*(\vect) &= \sfG_{g_b}\delta_{g_b}^*\omega_{b,v 1}^*(\vect),
  \end{alignat}
  \end{subequations}
where $\dot \bhvecta$, $\omega\in\Hbext^{\infty,-1/2-}$ depend on $b,\vect$; here $\scal\in\scalspace_1$, $\vect\in\vectspace_1$. We also have 
  \begin{align*}
   h_{b,s 0}\in \Hbext^{\infty,1/2-},\,  h_{b,s 1}(\scal)\in \Hbext^{\infty,-1/2-},\, h_{b,v 1}(\vect)\in \Hbext^{\infty,1/2-}.
  \end{align*}
  At $b=b_0$, we have $h_{b_0,v 1}(\vect)=2\omega_{b_0,s 0}\otimes_s\vect$. The dual states are supported in $r\geq r_+$, $\CI$ in $r>r_+$, conormal at $\pa_+X$ with the stated weight, and lie in $H^{-3/2-}$ near the event horizon.  
  Furthermore, all zero modes are solutions of the linearized Einstein equations and satisfy the linearized gauge condition; that is, $\ker\wh{L_{g_b}}(0)\cap\Hbext^{\infty,-1/2-}\subset\ker D_{g_b}\Ric\cap\ker D_{g_b}\Ups(-;g_b)$ in the notation of Definition \ref{def7.1}.
\end{thm}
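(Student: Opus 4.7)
The plan is to leverage the linearized second Bianchi identity to reduce the kernel analysis of $\wh{L_{g_b}}(\sigma)$ to the mode theory of $\wh{\Box_{g_b,1}}(\sigma)$ and of the ungauged linearized Einstein equation. On the Ricci-flat background $g_b$ we have the operator identities $2\delta_{g_b}G_{g_b}\circ\delta_{g_b}^* = \Box_{g_b,1}$ and $\delta_{g_b}G_{g_b}\circ D_{g_b}\Ric = 0$, so applying $\delta_{g_b}G_{g_b}$ to $\wh{L_{g_b}}(\sigma)h=0$ yields $\wh{\Box_{g_b,1}}(\sigma)(\delta_{g_b}G_{g_b}h)=0$. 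For $\sigma\neq 0$, Theorem \ref{Thm1}(1) forces $\delta_{g_b}G_{g_b}h=0$; hence $D_{g_b}\Ric(h)=\tfrac12\wh{L_{g_b}}(\sigma)h=0$, and Theorem \ref{ThmMS} gives $h=\delta_{g_b}^*\omega$ (the stationary linearized Kerr terms cannot contribute to a genuine mode with $\sigma\neq 0$). The gauge condition then becomes $\wh{\Box_{g_b,1}}(\sigma)\omega=0$, so Theorem \ref{Thm1}(1) again gives $\omega=0$, and therefore $h=0$. Combined with the Fredholm index zero from Theorem \ref{ThmOp}, this proves invertibility of \eqref{EqOpFredS}.

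For $\sigma=0$ with $h\in\Hbext^{m,q}$, $q\in(-\tfrac32,-\tfrac12)$, the weight shift puts $\delta_{g_b}G_{g_b}h$ at weight $q+1>-\tfrac12$, a class in which the unique zero mode $\omega_{b,s 0}\in\Hbext^{\infty,-1/2-}$ of $\wh{\Box_{g_b,1}}(0)$ does not lie; hence $\delta_{g_b}G_{g_b}h=0$. Theorem \ref{ThmMS} then yields $h=\dot g_b(\dot\bhm,\dot\bha)+\delta_{g_b}^*\omega$ with $\omega\in\Hbext^{\infty,q-1}$, and reinjecting the gauge condition produces
\[
  \wh{\Box_{g_b,1}}(0)\omega = -2\delta_{g_b}G_{g_b}\dot g_b(\dot\bhm,\dot\bha).
\]
Solvability is controlled by pairings of the right-hand side with the cokernel of $\wh{\Box_{g_b,1}}(0)$ in the relevant weight class, which by Propositions \ref{Prop10Grow} and \ref{Prop10Genv1} is spanned by $\omega_{b,s 0}^*$, the three $\omega_{b,s 1}^*(\scal)$ with $\scal\in\scalspace_1$, and the three $\omega_{b,v 1}^*(\vect)$ with $\vect\in\vectspace_1$. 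The non-vanishing computation \eqref{EC1}, $\la\delta_{g_b}G_{g_b}\dot g_b(1,0),\omega_{b,s 0}^*\ra=8\pi$, combined with the vanishing of the analogous pairings in the angular momentum direction, forces $\dot\bhm=0$, while $\dot\bha\in\R^3$ is accommodated by a suitable $\omega$ constructed as in Proposition \ref{Prop10Genv1}; this produces the three-parameter family $h_{b,v 1}(\vect)$. The homogeneous pure-gauge part of the kernel then comes from $\omega\in\ker\wh{\Box_{g_b,1}}(0)\cap\Hbext^{\infty,q-1}$, spanned by $\omega_{b,s 0}$, the three $\omega_{b,s 1}(\scal)$, and $\omega_{b,s 0}^{(0)}=\pa_t^\flat$; the last is annihilated by $\delta_{g_b}^*$ since $\pa_t$ is Killing, contributing nothing to $\ker\wh{L_{g_b}}(0)$. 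The total $3+3+1=7$ matches the claim.

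The cokernel is obtained by the dual operator identity $\wh{L_{g_b}}(0)^*\circ\sfG_{g_b}\delta_{g_b}^*=\sfG_{g_b}\delta_{g_b}^*\circ\wh{\Box_{g_b,1}}(0)^*$, which transports the seven dual $1$-form modes of Propositions \ref{Prop10Grow} and \ref{Prop10Genv1} to the seven states $h_{b,s 0}^*$, $h_{b,s 1}^*(\scal)$, $h_{b,v 1}^*(\vect)$; the support in $\{r\geq r_+\}$, smoothness in $\{r>r_+\}$, conormality at $\pa_+X$, and $H^{-3/2-}$ regularity near the event horizon all inherit directly from the corresponding properties of the underlying dual $1$-form modes. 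The final inclusion $\ker\wh{L_{g_b}}(0)\cap\Hbext^{\infty,-1/2-}\subset\ker D_{g_b}\Ric\cap\ker D_{g_b}\Ups(-;g_b)$ is immediate from the identity $\delta_{g_b}G_{g_b}h=0$, which is precisely $D_{g_b}\Ups(h;g_b)=0$, together with $D_{g_b}\Ric(h)=0$. The main obstacle is the careful bookkeeping of decay rates through the chain of reductions, and the precise pairing computation that distinguishes the obstructed mass direction from the permitted angular-momentum directions; the construction of $h_{b,v 1}(\vect)$, in particular, requires solving an inhomogeneous Poisson-type problem for $\omega$ whose source lies in the range of $\wh{\Box_{g_b,1}}(0)$ modulo its cokernel, exactly as in Proposition \ref{Prop10Genv1}.
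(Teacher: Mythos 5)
Your proposal is correct and follows essentially the same route as the paper: the linearized second Bianchi identity plus Theorem \ref{Thm1} to enforce the gauge condition, Theorem \ref{ThmMS} to reduce to linearized Kerr plus pure gauge, the pairings \eqref{EC1} and \eqref{7.11} with $\omega_{b,s 0}^*$ to kill $\dot\bhm$ and admit $\dot\bhvecta$, and the index-zero Fredholm property from Theorem \ref{ThmOp} for the cokernel count. The only (welcome) addition is that you make explicit the adjoint Bianchi identity $\wh{L_{g_b}}(0)^*\sfG_{g_b}\delta_{g_b}^*=\sfG_{g_b}\delta_{g_b}^*\wh{\Box_{g_b,1}}(0)^*$ producing the dual states, a verification the paper outsources to \cite[Proposition 9.1]{HHV}.
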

\begin{rmk}
Asymptotic boosts are not captured by this theorem. They are generalized mode solutions meaning that polynomial growth in $t_*$ has to be permitted, in particular the boosts having linear growth in $t_*$. In the small $\bha$ case also quadratically growing generalized modes exist which is essentially due to the choice of the gauge. In the small $\bha$ case they could be eliminated by constraint dumping, see sections 9 and 10 of \cite{HHV}.  We postpone the analysis of these generalized modes in the large $\bha$ case to future work.  
\end{rmk}
\begin{rmk}
Recall from the beginning of this section that the solution in  \eqref{EqL0v1} can also be written as $h_{b,v 1}(\vect)=\dot g_b(0,\dot \bhvecta^{\parallel})+\delta_{g_b}^*\omega$ for some appropriate gauge term $\delta_{g_b}^*\omega$. 
\end{rmk}

We also repeat remark 9.2 of \cite{HHV}: 
 \begin{rmk} 
\label{RmkL0RotVF} 
  The `asymptotic rotations' $\omega_{b,v 1}(\vect)$ of Proposition~\ref{Prop10Genv1} were not used here, even though they give rise to zero energy states $h_b(\vect):=\delta_{g_b}^*\omega_{b,v 1}(\vect)\in\ker\wh{L_{g_b}}(0)\cap\Hbext^{\infty,-1/2-}$. To explain why they are, in fact, already captured by Theorem~\ref{PropL0}, note first that when $b=(\bhm,0)$ describes a \emph{Schwarzschild} black hole, then $\omega_{b,v 1}(\vect)=r^2\vect$ is dual to a rotation, thus Killing, vector field, hence $h_b(\vect)\equiv 0$. On the other hand, when $b=(\bhm,\bha)$ with $\bha\neq 0$, consider the orthogonal splitting $\vectspace_1=\la\pa_\varphi^\flat\ra \oplus \vectspace^\perp$, where $\pa_\varphi$ is unit speed rotation around the axis of rotation; the latter is a Killing vector field for the metric $g_b$, and thus $h_b(\pa_\varphi^\flat)=0$. On the other hand, $\vectspace^\perp\ni\vect\mapsto h_b(\vect)$ is now injective; that this does \emph{not} give rise to new (i.e.\ not captured by Theorem~\ref{PropL0}) zero energy states is due to the fact that for such  $b$, the parametrization of the linearized Kerr family $\R^4\ni(\dot{\bhm},\dot{\bhvecta})\mapsto\dot g_b(\dot{\bhm},\dot{\bhvecta})$ is no longer injective when quotienting out by pure gauge solutions, but rather has a 2-dimensional kernel. As already explained at the beginning of this section, if $\dot\bhvecta$ is orthogonal to the axis of rotation, then $\dot g_{(\bhm,\bha)}(0,\dot\bhvecta)$ is pure gauge: it merely describes the same Kerr black hole with rotation axis rotated infinitesimally, i.e.\ is precisely of the form $h_b(\vect)$ for $\vect\in\vectspace^\perp$ (plus an extra pure gauge term depending on the presentation of the Kerr family). In summary then,
  \[
    h_{b,v 1}(\vectspace_1) + h_b(\vectspace_1) = h_{b,v 1}(\vectspace_1),\quad b=(\bhm,\bha),
  \]
  is 3-dimensional for $\bha=0$ as well as for $\bha\neq 0$.
\end{rmk}

\begin{proof}
  
  Consider a non-zero frequency mode solution $\wh{L_{g_b}}(\sigma)h=0,\, \Im\sigma\geq 0$. We will put $\dot{g}=e^{-i\sigma t_*}h$.

  The linearized second Bianchi identity implies
  \[
    \delta_{g_b}\sfG_{g_b}\delta_{g_b}^*(\delta_{g_b}\sfG_{g_b} \dot{g})=0.
  \]
  If $\sigma\neq 0$, then $\delta_{g_b}\sfG_{g_b} \dot{g}$ is an outgoing mode; if $\sigma=0$, then $\hat{\delta}_{g_b}(0)\sfG_{g_b} h\in\Hbext^{\infty,1/2-}$. Indeed we have 
  \begin{align*}
   \wh{\delta_{g_b}}(0) \in \rho\Diffb^1(X;S^2\,\wt\Tsc{}^*X,\wt\Tsc{}^*X), \\
     \wh{\delta_{g_b}^*}(0) \in \rho\Diffb^1(X;\wt\Tsc{}^*X,S^2\,\wt\Tsc{}^*X).
\end{align*}
This can be shown as in the small $\bha$ case, see \cite[equation (3.42)]{HHV}. 
  In both cases, Theorem~\ref{Thm1} and the fact that the generator $\omega_{b,s0}$ of the kernel does \emph{not} lie in $\Hbext^{\infty,1/2-}$ imply\
    \begin{equation}
  \label{EqL0LinGauge}
    \delta_{g_b}\sfG_{g_b} \dot{g} = 0
  \end{equation}
  and thus also
  \begin{equation}
  \label{EqL0LinEin}
    D_{g_b}\Ric(\dot{g}) = 0.
  \end{equation}

  Next, we apply the mode stability result, Theorem~\ref{ThmMS}. Consider first the case $\sigma\neq 0$; then $\dot{g}=\delta_{g_b}^*\omega$ with $\omega$ an outgoing mode; plugging this into \eqref{EqL0LinGauge}, we obtain $\Box_{g_b,1}\omega=0$ and hence $\omega=0$ by Theorem \ref{Thm1}, thus $h=0$. This proves the injectivity of $\wh{L_{g_b}}(\sigma)$ for non-zero $\sigma$ with $\Im\sigma\geq 0$, hence its invertibility by Theorem \ref{ThmOp}.

  Suppose now $\sigma=0$, that is we consider 
\begin{align}
   \label{zmLEG}
   \wh{L_{g_b}}(0)h=0.
\end{align}
 By theorem \ref{ThmMS} we know that 
  \begin{align*}
  h=\dot{g}_b(\dot{\bhm},\dot{\bhvecta})+\delta_{g_b}^*\omega,\, \omega\in \Hbext^{m,q-1}. 
  \end{align*}
  Plugging this into \eqref{EqL0LinGauge} gives 
  \begin{align}
\label{Eqomega}
  \wh{\Box}_{g_b,1}(0)\omega=-2\delta_{g_b}G_{g_b}\dot{g}_b(\dot{\bhm},\dot{\bhvecta}).
  \end{align}
Pairing with $\omega^*_{b,s0}$ gives 
\begin{align*}
0&=-2\la\delta_{g_b}G_{g_b} \dot{g}_b(\dot{\bhm},\dot{\bhvecta}),\omega_{b,s0}^*\ra.
 \end{align*} 
 This entails 
 \begin{align*}
 \dot{\bhm}=-\frac{\la \delta_{g_b}G_{g_b}  \dot{g}_b(0,\dot{\bhvecta}),\omega_{b,s0}^*\ra}{\la\delta_{g_b}G_{g_b} \dot{g}_b(1,0),\omega_{b,s0}^*\ra}=-\frac{1}{8\pi}\la \delta_{g_b}G_{g_b}  \dot{g}_b(0,\dot{\bhvecta}),\omega_{b,s0}^*\ra,
 \end{align*}
 where we have used \eqref{EC1}. An explicit calculation shows\footnote{Calculation realized with maple.}
 \begin{align}
 \label{7.11}
 \frac{1}{8\pi}\la \delta_{g_b}G_{g_b}  \dot{g}_b(0,\dot{\bhvecta}),\omega_{b,s0}^*\ra=0
 \end{align}
 and thus $ \dot{\bhm}=0$. Note that
  the general solution of \eqref{Eqomega} writes as an element of the kernel given by proposition \ref{Prop10Grow} plus a ``special solution''. These special solutions are parametrized by $\dot{\bhvecta}\in \R^3$.  This shows that every solution of $\wh{L_{g_b}}(0)h=0$ is of the form \eqref{EqL0s0}-\eqref{EqL0v1}. We now have to show that  \eqref{EqL0s0}-\eqref{EqL0v1} define indeed solutions of \eqref{zmLEG}. $h_{b,s_0}$ and $h_{b,s_1}$ are solutions of both  \eqref{EqL0LinGauge} (by construction of $\omega_{b,s_0}$ and $\omega_{b,s_1})$ and  \eqref{EqL0LinEin} (as pure gauge solutions). 
   
 It remains to construct a continuous family (in $b$) of elements of $\ker\wh{L_{g_b}}(0)\cap\Hbext^{\infty,-1/2-}$ extending $h_{b_0,v 1}(\vect)$. For $\vect\in\vectspace_1$ which is (dual to) the rotation around the axis $\dot\bhvecta\in\R^3$ (with $\vect$ having angular speed $|\dot\bha|$), we make the ansatz
  \begin{equation}
  \label{EqL0v1Ansatz}
    h_{b,v 1}(\vect) = \dot g_b(0,\dot\bhvecta) + \delta_{g_b}^*\omega,
  \end{equation}
  with $\omega\in\Hbext^{\infty,-3/2-}$ to be found. The equation $\wh{L_{g_b}}(0)h_{b,v 1}(\vect)=0$ is then satisfied provided\footnote{The RHS has been computed explicitly with maple.}
 
  \begin{equation}
  \label{EqL0v1Kerr}
    \wh{\Box_{g_b}}(0)\omega = -2\delta_{g_b}\sfG_{g_b}\dot g_b(0,\dot\bhvecta) \in \Hbext^{\infty,3/2-}. 
      \end{equation}
  In view of Theorem~\ref{Thm1}, the obstruction for solvability of this is the cokernel $\ker\wh{\Box_{g_b}}(0)^*\cap\Hbsupp^{-\infty,-1/2+}=\la\omega_{b,s 0}^*\ra$. In view of \eqref{7.11} the RHS is in the image of  $\wh{\Box_{g_b}}(0)$
  and the equation can be solved with some $\omega\in \Hbext^{\infty,-1/2-}$. 
   By theorem \ref{ThmOp}, $\wh{L_b}(0)$ is Fredholm of index zero, therefore the cokernel has dimension 7. It can then be checked that the elements of the RHS of \eqref{EqL0s0}-\eqref{EqL0v1} are elements of the cokernel and have the required properties, we refer to the proof of \cite[Proposition 9.1]{HHV} for details. 
   \end{proof}
   \begin{rmk}
 Note that when changing coordinates by $\ft=t_*+F$, $F\in\CI(X^\circ_b)$, $\dot{g}_b(0,\dot{\bhvecta})$ will be changed by a gauge term $\delta_{g_b}^*\omega$. Changing $\dot{g}_b(0,\dot{\bhvecta})$ by a gauge term $\delta_{g_b}^*\omega\in\Hbext^{\infty,-1/2-}$ will not change the pairing in \eqref{7.11}. Indeed 
 \begin{align*}
 -2\la \delta_{g_b}G_{g_b}\delta_{g_b}^*\omega,\omega_{b,s0}\ra=\la\wh{\Box_{g_b}}(0)\omega,\omega_{b,s0}^*\ra=\la\omega,\wh{\Box_{g_b}}^*(0)\omega^*_{b,s0}\ra=0.
 \end{align*} 

 This also means that \eqref{7.11} only has to be computed for perturbations $\dot{\bhvecta}$, which are parallel to the axis of rotation, see Remark \ref{RmkL0RotVF}.
 \end{rmk}

\appendix 

\section{Proof of proposition \ref{prop6.5}}\label{prop6.5Ap}

In \cite{2018PhRvL.121e1104A}, two complex scalar gauge invariants, $\InvSymb_\xi$ and $\InvSymb_\zeta$, were presented for perturbations of the Kerr spacetime.  
The authors also identified specific curvature invariants that reduce to these gauge invariants in the linearized theory.  As already indicated in section \ref{gauge_inv}, these invariants are sensitive to variations of the Kerr parameters.  Together with the Teukolsky scalars, $\boldsymbol{\Phi}_{\pm 2}\equiv\vartheta\Psi_{\pm 2}$ (in the notation of \eqref{varPsis}), and the linearized Ricci tensor, (denoted by the linearized Ricci spinor $\vartheta\Phi_{ABA'B'}$ in the NP formalism), they form a minimal set that generates all local gauge invariants.  For Proposition \ref{prop6.5}, we are interested in vacuum, type D, perturbations, for which both $\vartheta\Phi_{ABA'B'}=0$ (vacuum) and $\vartheta\Psi_{\pm 2}=0$ (Type D).  In this Appendix, we discuss vacuum, type D, perturbations of Kerr in Boyer-Lindquist coordinates $(t,r,x=\cos\theta,\phi)$. By comparison with equation (24a-e) of \cite{2018PhRvL.121e1104A}, the equations \eqref{eq:IIIAlgSpecSol} below, for $\InvSymb_\xi$ and $\InvSymb_\zeta$, show that there are then only perturbations within the Plebanski-Demianski family. This confirms the classical result of Wald \cite{Wa} obtained by using a different technique. We are grateful to Steffen Aksteiner \cite{Aksteiner:private} for providing this argument, cf. \cite{Aksteiner:GI}.

The extra notation introduced here has been defined in \cite{2016arXiv160106084A}.  
Denote the linearized metric by $h_{ab}$ and its trace-free and trace parts by $\htf_{ab}, \slashed{h}$, respectively. 

Let $\mathcal{K}^i$ be the projection operators defined in 
\cite[\S II.D]{2019JMP....60h2501A}. Let $\kappa_{AB}$ be the Killing spinor in the Kerr spacetime, and let $\kappa_0$ be the corresponding spin-weight zero scalar  
so that $\kappa_{AB} = -2 \kappa_0 o_{(A} \iota_{B)}$ for a principal dyad $o_A, \iota_B$, cf. \cite[Eq. (21)]{2019JMP....60h2501A}. The operators $\mathcal{K}^i$ acting on a spinor $\varphi_{A \dots D A' \cdots D'}$ is defined, up to a normalization, by tensoring with $\kappa_0^{-1} \kappa_{AB}$ and contracting $i$ indices. For example, for $\varphi_{ABA'B'}$, we have 

\begin{align} 
(\mathcal{K}^0 \varphi)_{ABCDA'B'} ={}& 2 \kappa_0^{-1} \kappa_{(AB} \varphi_{CD)A'B'} \\  
(\mathcal{K}^1 \varphi)_{ABA'B'} ={}& \kappa_0^{-1} \kappa_{(A}{}^F \varphi_{B)FA'B'} \\ 
(\mathcal{K}^2 \varphi)_{A'B'} ={}& -\tfrac{1}{2} \kappa_0^{-1} \kappa^{AB} \varphi_{ABA'B'} 
\end{align} 
We shall also need the fundamental spinor operators $\sTwist, \sCurl, \sCurlDagger$, cf. \cite[\S II.C]{2019JMP....60h2501A}. For example, for a spinor $\varphi_{BC}{}^{B'C'}$ we have  
\begin{align} 
(\sTwist \varphi)_{ABC}{}^{A'B'C'} ={}& \nabla_{(A}{}^{(A'} \varphi_{BC)}{}^{B'C')} \\ 
(\sCurl \varphi)_{ABC}{}^{C'} ={}& \nabla_{(AB'} \varphi_{BC)}{}^{B'C'} \\ 
(\sCurlDagger \varphi)_{C}{}^{A'B'C'} ={}& \nabla^{B(A'} \varphi_{BC}{}^{B'C')}
\end{align} 
Finally, the spin projection operator $\mathcal{P}^i$, cf. \cite[\S II.D]{2019JMP....60h2501A} yields a spinor depending only on the components of spin-weights $\pm i$. Here we shall need only $\mathcal{P}^2$ which when acting on $\varphi_{ABCD}$ takes the form 
\begin{align} 
(\mathcal{P}^2 \varphi)_{ABCD} = (\mathcal{K}^1\mathcal{K}^1\mathcal{K}^1\mathcal{K}^1 \varphi)_{ABCD} - \tfrac{1}{16} (\mathcal{K}^0\mathcal{K}^1\mathcal{K}^1\mathcal{K}^2 \varphi)_{ABCD}. 
\end{align} 
In particular, $(\mathcal{P}^2 \vartheta\Psi)_{ABCD}$ depends only on the scalars $\vartheta\Psi_{\pm 2}$. 

Recall the definition of $\InvSymb_V, V\in\{\xi,\zeta\}$ from Section~5 of 
\cite{2019arXiv191008756A} together with the definition of $\AA^a$ from equation (57a) of 
\cite{2016arXiv160106084A},  assuming vanishing linearized Ricci spinor, $\vartheta\Phi_{ABA'B'} = 0$:
\begin{align} \label{AAdef}
\AA_a={}&- \tfrac{1}{108} M \slashed{h}_{} \xi_a
 -  \tfrac{1}{54} M \xi^{BB'} (\mathcal{K}^0\mathcal{K}^2\htf)_{ABA'B'}\\
 {}&+ \tfrac{2}{3} \kappa_{1}{}^3 \xi^{B}{}_{A'} (\mathcal{K}^1\mathcal{K}^2\vartheta \Psi)_{AB} + (\mathcal{K}^1\sTwist (\kappa_{1}{}^4 \vartheta \Psi_0))_{AA'}.\nonumber
\end{align}
\begin{align} \label{IVDef}
\InvSymb_V={}&-81 \AA^{a} V_{a}
 -  \tfrac{3}{2} \bhm h_{ab} V^{a} \xi^{b}\\
 {}&+54 \Re\left(\kappa_{1}{}^3 V^{AA'} \xi^{B}{}_{A'} (\mathcal{K}^1 \mathcal{K}^2 \vartheta \Psi)_{AB}
 -  \tfrac{3}{2} \kappa_{1}{}^4 (\mathcal{K}^2\sCurl V) \vartheta \Psi_0 \right).\nonumber
\end{align}

\begin{prop}
\label{propA.1}
Assume perturbations with vanishing linearized Ricci spinor, $\vartheta\Phi_{ABA'B'} = 0$, and  
the linearized Weyl scalars 
$\vartheta\Psi_{\pm 2}=0$. Then compatibility conditions between gauge invariants yield the gradient
\begin{align} \label{GradIV}
\nabla_a \InvSymb_V={}&-81i V_{A}{}^{B'} (\sCurlDagger \ImA)_{B'A'}
 + 81i (\sCurlDagger V)^{B'}{}_{A'} \ImA_{AB'}
\end{align}
for $ V\in\{\xi,\zeta\}$.
\end{prop}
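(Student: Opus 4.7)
My plan is to compute $\nabla_a \InvSymb_V$ directly from the definition \eqref{IVDef}, exploiting the vacuum assumption $\vartheta\Phi_{ABA'B'} = 0$ together with the type-D condition $\vartheta\Psi_{\pm 2} = 0$ (i.e.\ $\mathcal{P}^2 \vartheta\Psi = 0$), and then reorganize the resulting expression into the form \eqref{GradIV}. The guiding principle is that $\AA^a$ from \eqref{AAdef} is constructed so as to be the natural ``potential'' for the scalar gauge invariant, and that $V=\xi$ (Killing) and $V=\zeta$ (built from the Killing spinor on Kerr) satisfy enough symmetry identities to make the computation close up.

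First I would differentiate \eqref{IVDef} term-by-term. The $\AA^a V_a$ term splits into $(\nabla_a \AA_b) V^b + \AA^b \nabla_a V_b$; the remaining terms involving $h_{ab}V^a\xi^b$ and the $\mathcal{K}^1\mathcal{K}^2 \vartheta\Psi$, $\vartheta\Psi_0$ contributions are treated analogously, using that $\nabla_a \xi_b$ is antisymmetric (Killing) and that $\zeta_a$ satisfies the conformal-Killing identity induced by the Kerr Killing spinor $\kappa_{AB}$. At each step one uses the defining identities for $\sTwist$, $\sCurl$, $\sCurlDagger$ together with the GHP commutators on $\mathcal{B}(s,w)$.

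Second, the condition $\vartheta\Psi_{\pm 2}=0$ kills the spin-weight $\pm 2$ components of $\vartheta\Psi_{ABCD}$, so the only surviving components are those already accounted for by $\mathcal{K}^1\mathcal{K}^2\vartheta\Psi$ in \eqref{AAdef}--\eqref{IVDef}. Combined with the vacuum linearized Bianchi identity $\sCurlDagger \vartheta\Psi = 0$ (which is the spinor form of $D_{g_b}\delta_{g_b} W = 0$ when $\vartheta\Phi_{ABA'B'}=0$), one can rewrite all derivatives of the $\vartheta\Psi$-terms in \eqref{IVDef} purely in terms of $\sCurl$ of $\mathcal{K}^i \vartheta\Psi$, eliminating the $\sTwist$-derivatives present via \eqref{AAdef}. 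The metric-perturbation pieces transform under $\nabla$ in a parallel way using the commutation identities for $\mathcal{K}^i$ with $\nabla$ catalogued in \cite[\S II.C--II.D]{2019JMP....60h2501A}.

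The final step is to recognize that, after all cancellations enforced by vacuum plus type-D, the remaining combination is exactly $\Im \AA_a$ hit by $\sCurlDagger$ and paired against $V$ in the two ways shown in \eqref{GradIV}. The factor of $i$ (hence $\Im$ rather than $\Re$) comes from antisymmetrization in the primed indices $A'B'$, which isolates the imaginary part of $\AA$; the factor $81$ is inherited from the normalization in \eqref{IVDef}. The main obstacle is not conceptual but combinatorial: keeping track of the spinor indices and the powers of $\kappa_0$, $\kappa_1$ through the cascade of identities from \cite{2016arXiv160106084A} and \cite{2019JMP....60h2501A}, and verifying that the $V$-dependent contributions assemble precisely into the two terms $V_A{}^{B'}(\sCurlDagger \ImA)_{B'A'}$ and $(\sCurlDagger V)^{B'}{}_{A'}\ImA_{AB'}$ rather than producing extra Killing-derivative remainders. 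The Killing/conformal-Killing identities for $\xi$ and $\zeta$ are exactly what is needed to absorb those remainders, and this is where the hypothesis ``$V\in\{\xi,\zeta\}$'' is used in an essential way.
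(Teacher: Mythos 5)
Your overall direction --- differentiate \eqref{IVDef} term by term and use the linearized Bianchi identities together with the $\mathcal{K}^i$ and $\sTwist,\sCurl,\sCurlDagger$ calculus --- matches the paper's, but two concrete problems keep your plan from closing. The first is that the linearized Bianchi identity on a vacuum background in the $\vartheta$-formalism is \emph{not} $\sCurlDagger\vartheta\Psi=0$: it reads $\sCurlDagger\vartheta\Psi=\partial^{\leq 1}h$, i.e.\ it carries source terms of first differential order in the metric perturbation (Lemma~3.1 of \cite{2016arXiv160106084A}). Several of your intermediate reductions (eliminating the $\sTwist$-derivatives of the $\vartheta\Psi$-terms, rewriting $\nabla_c(\mathcal{K}^1\mathcal{K}^2\vartheta\Psi)_{AB}$) rely on the homogeneous form of this identity and are false as exact equations.

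The second, and more important, missing ingredient is the structural device that both repairs the first point and makes the computation tractable: both sides of \eqref{GradIV} are gauge invariant, and under the hypotheses $\vartheta\Phi_{ABA'B'}=0$, $\vartheta\Psi_{\pm 2}=0$ the classification of \cite{2018PhRvL.121e1104A, 2019arXiv191008756A} implies that \emph{every} gauge invariant of differential order at most two in $h_{ab}$ vanishes. The paper therefore performs the whole computation only modulo $\partial^{\leq 2}h$ terms --- the Bianchi identities, the identity $\nabla_{(a}\AA_{b)}=\partial^{\leq 1}h$, and the formula for $\sCurl\AA$ are all used in this weak form --- and then concludes that the gauge-invariant remainder must vanish identically. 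Your plan instead demands exact cancellation of all Killing-derivative and lower-order remainders, a far heavier combinatorial burden, and it is not the place where the hypotheses actually enter. Note also that the final step is not an antisymmetrization in primed indices isolating $\Im\AA$: it is the splitting $\AA_c=\ReA_c+i\ImA_c$ followed by elimination of $\sCurlDagger\ReA$ via the complex conjugate of the $\sCurl\AA$ identity, after which one checks that all $\ReA$-terms (and most $\ImA$-terms) cancel precisely for $V^a\in\{\xi^a,\zeta^a\}$. You should restructure the argument around the ``modulo $\partial^{\leq 2}h$ plus completeness of the invariants'' principle before attempting the index bookkeeping.
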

\begin{proof}
Introduce the notation $\partial^{\leq n}h$ for a collection of terms containing up to $n$ derivatives of the linearized metric $h_{ab}$. It follows from the classification of gauge invariants, cf. \cite{2019arXiv191008756A,2018PhRvL.121e1104A} that since we consider only vacuum perturbations with $\vartheta\Psi_{\pm 2}=0$, all gauge invariants of at most second differential order in $h_{ab}$ are zero by assumption. This implies that we can prove \eqref{GradIV} up to $\partial^{\leq 2}h$ terms which will, by construction and gauge invariance, be automatically zero in the final step . We do this computation in several steps and the terms in $\partial^{\leq n}h$ may differ from line to line.

The gradient of \eqref{IVDef} is of the form 
\begin{align} \label{GradIVStep1}
\nabla_c\InvSymb_V={}&-81 \nabla_c( \AA^{a} V_{a} ) \\
 {}& +54 \Re\left(\kappa_{1}{}^3 V^{AA'} \xi^{B}{}_{A'} \nabla_c(\mathcal{K}^1 \mathcal{K}^2 \vartheta \Psi)_{AB}
 -  \tfrac{3}{2} \kappa_{1}{}^4 (\mathcal{K}^2\sCurl V) \nabla_c\vartheta \Psi_0 \right) + \partial^{\leq 2}h.\nonumber
\end{align}

We compute the three non-trivial terms, $\nabla_c\vartheta \Psi_0, \nabla_c(\mathcal{K}^1 \mathcal{K}^2 \vartheta \Psi)_{AB}, \nabla_c( \AA^{a} V_{a} )$, separately to finally recombine them in \eqref{GradIVStep1} to proof the result.

The first term is straightforward, as from \eqref{AAdef} we have
\begin{align} \label{gradPsi2dot} 
 \nabla_a \vartheta \Psi_0 = (\sTwist \vartheta \Psi_0)_{AA'} ={}& \kappa_{1}{}^{-4} (\mathcal{K}^1\AA)_{AA'} + \partial^{\leqslant 2}{}h.
\end{align}
For the second term, we have to use linearized Bianchi identities. By Lemma~3.1 of 
\cite{2016arXiv160106084A} and because we assume $\vartheta\Phi_{ABA'B'} = 0$, the linearized Bianchi identities are of the form
\begin{align} \label{linBianchi}
(\sCurlDagger \vartheta \Psi) = \partial^{\leq 1}h,
\end{align}
and we collect some consequences:
\begin{itemize}
\item Applying $\mathcal{K}^1\mathcal{K}^2$ to \eqref{linBianchi} we find
\begin{align}
\sCurlDagger \mathcal{K}^1\mathcal{K}^2\vartheta \Psi={}& \sTwist \vartheta \Psi_0 + \partial^{\leqslant 2}{}h.
\end{align}
\item Applying $\mathcal{K}^1\mathcal{K}^1$ to \eqref{linBianchi} we find
\begin{align}
\sTwist \mathcal{K}^1\mathcal{K}^2\vartheta \Psi={}& -  \sCurlDagger \mathcal{K}^0\mathcal{K}^2\vartheta \Psi + \partial^{\leqslant 1}{}h.
\end{align}
\item Using a spin decomposition of $\vartheta\Psi$, see Example~II.8 in 
\cite{2016arXiv160106084A}, and some commutators of $\mathcal{K}$ operators, we find
\begin{align}
\sCurlDagger \vartheta \Psi = \sCurlDagger  \mathcal{P}^{2} \vartheta \Psi + \sCurlDagger  \mathcal{K}^0 \mathcal{K}^2 \vartheta \Psi   + \tfrac{1}{4}\mathcal{K}^0 \mathcal{K}^1 \sTwist \vartheta \Psi_0 + \partial^{\leq 2}h,
\end{align}
which leads to
\begin{align}
\sCurlDagger  \mathcal{K}^0 \mathcal{K}^2 \vartheta \Psi = - \tfrac{1}{4}\mathcal{K}^0 \mathcal{K}^1 \sTwist \vartheta \Psi_0 + \partial^{\leq 2}h,
\end{align}
by \eqref{linBianchi} and the assumption $\vartheta\Psi_{\pm 2} = 0$ which is equivalent to $\mathcal{P}^{2} \vartheta \Psi = 0$.
\end{itemize}
Using these points, we can compute the second term,
\begin{align} \label{nablaK1K2cqPsi}
\nabla_c(\mathcal{K}^1 \mathcal{K}^2 \vartheta \Psi)_{AB}={}&\tfrac{2}{3} \epsilon_{(A|C|}(\sCurlDagger \mathcal{K}^1 \mathcal{K}^2 \vartheta \Psi)_{B)C'}
 + (\sTwist \mathcal{K}^1 \mathcal{K}^2 \vartheta \Psi)_{ABCC'} \nonumber \\
 ={}&\tfrac{2}{3} \epsilon_{(A|C|}(\sTwist \vartheta \Psi_0)_{B)C'}
  + \tfrac{1}{4}(\mathcal{K}^0 \mathcal{K}^1 \sTwist \vartheta \Psi_0)_{ABCC'} + \partial^{\leqslant 2}{}h \nonumber \\
 ={}&\tfrac{2}{3} \kappa_{1}{}^{-4}\epsilon_{(A|C|}(\mathcal{K}^1\AA)_{B)C'}
  + \tfrac{1}{4} \kappa_{1}{}^{-4}(\mathcal{K}^0 \AA)_{ABCC'} + \partial^{\leqslant 2}{}h,   
\end{align}
where \eqref{gradPsi2dot} was used in the last step.

The third term involves derivatives of $\AA^a$ which can be found in \cite{2016arXiv160106084A}, in particular we have
\begin{align} \label{nablaAAsym}
\nabla_{(a}\AA_{b)} = \partial^{\leq 1}h,
\end{align}
with real right hand side, and applying $\mathcal{K}^1$ to (58c) of \cite{2016arXiv160106084A} and commuting\footnote{Commutators with $\mathcal{K}$-operators are given in Appendix~B of \cite{2019JMP....60h2501A}.}
 operators, we find
\begin{align} \label{CurlA}
\sCurl \AA ={}&
 -  \frac{2}{3 \kappa_{1}{}}\mathcal{K}^1(\xi \overset{0,1}{\odot}\AA)
 -  \tfrac{2}{3} \kappa_{1}{}^3\xi \overset{1,1}{\odot}\sTwist \mathcal{K}^1\mathcal{K}^2\vartheta \Psi + \tfrac{4}{9} \kappa_{1}{}^3\xi \overset{0,1}{\odot}\sCurlDagger \mathcal{K}^1\mathcal{K}^2\vartheta \Psi + \partial^{\leq 2}h.
\end{align}
Using \eqref{nablaAAsym} and that $V^a$ is Killing, the third term becomes
\begin{align} \label{nablaVA}
\nabla_c(V^a \AA_a)={}&\tfrac{1}{2} V^{A}{}_{C'} (\sCurl\AA)_{CA}
 + \tfrac{1}{2} \AA^{A}{}_{C'} (\sCurl V)_{CA}
 + \tfrac{1}{2} V_{C}{}^{A'} (\sCurlDagger \AA)_{C'A'} + \tfrac{1}{2} \AA_{C}{}^{A'} (\sCurlDagger V)_{C'A'} +  \partial^{\leq 1}h.
\end{align} 
Inserting \eqref{gradPsi2dot}, \eqref{nablaK1K2cqPsi}, \eqref{nablaVA} and \eqref{CurlA} in  \eqref{GradIVStep1} leads to
\begin{align}
\nabla_c\InvSymb_V={}&- \tfrac{81}{2} \AA^{A}{}_{C'} (\sCurl V)_{CA}
 -  \tfrac{81}{2} V_{C}{}^{A'} (\sCurlDagger \AA)_{C'A'}
 -  \tfrac{81}{2} \AA_{C}{}^{A'} (\sCurlDagger V)_{C'A'} +  \partial^{\leq 2}h \nonumber\\
& + \frac{27 V^{AA'} \xi^{B}{}_{A'} (\mathcal{K}^0 \AA)_{CABC'}}{4 \kappa_{1}{}}
 + \frac{27 V^{AA'} \xi_{A}{}^{B'} (\overline{\mathcal{K}}^0 \overline{\AA})_{CC'A'B'}}{4 \bar{\kappa}_{1'}{}}\nonumber\\
& + \frac{9 V_{C}{}^{A'} \xi^{A}{}_{A'} (\mathcal{K}^1 \AA)_{AC'}}{\kappa_{1}{}}
 + \frac{9 V^{AA'} \xi_{CA'} (\mathcal{K}^1 \AA)_{AC'}}{\kappa_{1}{}}
 + \frac{9 V^{A}{}_{C'} \xi_{A}{}^{A'} (\overline{\mathcal{K}}^1 \overline{\AA})_{CA'}}{\bar{\kappa}_{1'}{}}\nonumber\\
& + \frac{9 V^{AA'} \xi_{AC'} (\overline{\mathcal{K}}^1 \overline{\AA})_{CA'}}{\bar{\kappa}_{1'}{}}
 -  \tfrac{81}{2} (\mathcal{K}^1 \AA)_{CC'} (\mathcal{K}^2 \sCurl V)_{} -  \tfrac{81}{2} (\overline{\mathcal{K}}^1 \overline{\AA})_{CC'} (\overline{\mathcal{K}}^2 \sCurlDagger V)_{}.
\end{align}
The final step consists of splitting $\AA_c = \ReA_c + i \ImA_c$, eliminating $\sCurlDagger \ReA$ using the complex conjugate of \eqref{CurlA}, and computing that all terms involving $\ReA_c$ cancel for $V^a \in \{\xi^a, \zeta^a\}$. Similarly, most terms involving $\ImA_c$ cancel so that we end up with \eqref{GradIV}.

\end{proof}

In Boyer-Lindquist coordinates $\xi=\partial_t$ and $\zeta = \bha^2 \partial_t + \bha \partial_\phi$. $\ImA_a$ is a real, gauge invariant vector field.
Since a (nice) identity, see 
\cite{2016arXiv160106084A}, dictates $\ImA_a$ to be a Killing vector, we can make an ansatz
\begin{align} \label{ImAAnsatz}
\ImA_a = A \xi_a + B \zeta_a,
\end{align}
for real constants $A,B$. 

As $\zeta_a$ can be written in terms of $\xi_a$ and the Killing spinor,
\begin{align}
\zeta_{AA'}={}&- \tfrac{9}{4} (\kappa_{1}{}^2 + \bar{\kappa}_{1'}{}^2) \xi_{AA'}
 + \tfrac{9}{2} \kappa_{AB} \bar{\kappa}_{A'B'} \xi^{BB'},
\end{align}
the ansatz \eqref{ImAAnsatz} inserted into \eqref{GradIV} can be simplified and an expansion in Boyer-Lindquist coordinates leads to $\partial_t \InvSymb_V = \partial_\phi \InvSymb_V  = 0$ and (remember that $x=\cos\theta$)
\begin{subequations}
\begin{align}
\partial_r \InvSymb_\xi={}&\frac{81 \Bigl(-2i A M + B \bha x \bigl(r^2 + 2i \bha r x -  \bha x (2i M + \bha x)\bigr)\Bigr)}{(r + i \bha x)^2}, \\
\partial_x \InvSymb_\xi={}&\frac{81 \bha \Bigl(2 A M + B \bigl(r (r + i \bha x)^2 + M (-3 r^2 - 2i \bha r x + \bha^2 x^2)\bigr)\Bigr)}{(r + i \bha x)^2}, \\
\partial_r \InvSymb_\zeta={}&\frac{81 A \bha x \bigl(r^2 + 2i \bha r x -  \bha x (2i M + \bha x)\bigr)}{(r + i \bha x)^2}\nonumber\\
& -  \frac{162 B \bha^2 \bigl(\bha^2 x^3 (\bha + i M x) + i r^3 (-1 + x^2) - i \bha^2 r x^2 (1 + x^2) + \bha r^2 (x - 2 x^3)\bigr)}{(r + i \bha x)^2}, \\
\partial_x \InvSymb_\zeta={}&\frac{81 A \bha \bigl(r (r + i \bha x)^2 + M (-3 r^2 - 2i \bha r x + \bha^2 x^2)\bigr)}{(r + i \bha x)^2}\nonumber\\
& + \frac{162 B \bha^2 \bigl(-i r^4 x + \bha^3 r x^2 + i \bha^4 x^3 + i \bha^2 r x (r - 2 M x^2 + r x^2) + \bha r^2 (r - 3 M x^2 + 2 r x^2)\bigr)}{(r + i \bha x)^2}.
\end{align}
\end{subequations}

With complex constants $C,D$, the general solution is given by
\begin{subequations}\label{eq:IIIAlgSpecSol}
\begin{align}
\InvSymb_\xi={}&C
 + \frac{81i \bigl(2 A M + B \bha x (3i M r - i r^2 -  M \bha x + \bha r x)\bigr)}{r + i \bha x}, \\
\InvSymb_\zeta={}&D
 + \frac{81 A \bha x (3i M r - i r^2 -  M \bha x + \bha r x)}{-i r + \bha x}\nonumber\\
& + \frac{81 B \bha^2 \bigl(-i \bha^3 x^3 + \bha r x^2 (\bha + 2i M x) -  r^3 (-1 + x^2) - i \bha r^2 x (1 + x^2)\bigr)}{-i r + \bha x},
\end{align}
\end{subequations}
which, by comparison with \cite{2018PhRvL.121e1104A}, can be directly identified with perturbations within the Plebanski-Demianski family of solutions.

\section{Proof of Proposition \ref{propdecayInv}}\label{prop6.8Ap}

We work on the Kerr background with Boyer-Lindquist coordinates $(x^a) = (t,r,\theta,\phi)$.  
Use index symbols $A,B,\dots$ for $t,r$, $\alpha, \beta, \dots$ for angular, and $a,b,\dots$ for general coordinates. We start by collecting relevant decay properties of the background quantities. For the metric components we have
\begin{subequations}\label{eq:gcomp} 
\begin{align} 
g_{AB} ={}& O(1), \quad g_{A\alpha} = O(r^{-1}), \quad g_{\alpha\beta} = O(r^2), \\ 
g^{AB} ={}& O(1), \quad g^{A\alpha} = O(r^{-3}), \quad g^{\alpha\beta} = O(r^{-2}).
\end{align} 
\end{subequations}
For the Christoffel symbols we have (see \cite{GM} for their explicit form):
\begin{subequations}\label{eq:Gamcomp}
\begin{align} 
\Gamma_{t\alpha}^A = O(r^{-2}), \quad \Gamma_{t\alpha}^\gamma = O(r^{-3}), \\ 
\Gamma_{r\alpha}^A = O(r^{-2}), \quad \Gamma_{r\alpha}^\beta = O(r^{-1}),\\ 
\Gamma_{t\alpha}^a = O(r^{-2}), \quad \Gamma_{r\alpha}^a = O(r^{-1})\label{A.2.c}.
\end{align} 
\end{subequations}
The remaining quantities which are needed are
\begin{subequations}\label{eq:backgr}
\begin{align} 
\varrho, \varrho' ={}& O(r^{-1}), \\ 
\tau, \tau' ={}& O(r^{-2}), \\ 
\Psi_0 ={}& O(r^{-3}), \\
\Im \Psi_0 ={}& O(r^{-4}), \\  
p ={}& O(r), \\ 
\Im p ={}& O(1).
\end{align} 
\end{subequations}
We start by proving (1). The coordinate components of the Riemann tensor are 
\begin{align} \label{eq:Riem} 
R_{abcd} = \half (g_{ad,bc} + g_{bc,ad} - g_{ac,bd} - g_{bd,ac} ) + g_{ef} (\Gamma^e_{ad} \Gamma^f_{bc} - \Gamma^e_{ac} \Gamma^f_{bd} ).
\end{align} 
Since we are considering the vacuum case, this agrees with the Weyl tensor. We will start by showing 
\begin{align} 
\Im \vartheta \Psi_0 = O(r^{-4}).
\end{align} 
As 
\begin{align} 
\Im \vartheta \Psi_0 \approx{}& r^{-2} \dot R_{tr\theta\phi}  
\end{align} 
where $\dot R_{abcd}$ is the linearized Riemann tensor, this corresponds to
\begin{align} \label{eq:dotR-2} 
\dot R_{tr\theta\phi} = O(r^{-2}).
\end{align} 
From \eqref{eq:Riem}, the $\dot R_{tr\alpha\beta}$ has two terms $I, II$. 
We have using the special form of $h$
\begin{align*} 
I =& \half ( h_{t\varphi;r\theta} +h_{r\theta;t\varphi}- h_{t\theta;r\varphi}-h_{r\varphi;t\theta})=\frac{1}{2}(h_{t\varphi;r\theta}-h_{t\theta;r\varphi})=\cO(r^{-2}). 
\end{align*} 
For the second term, we consider the linearization of 
\begin{align} 
 g_{ef} (\Gamma^e_{t\phi} \Gamma^f_{r\theta} - \Gamma^e_{t\theta} \Gamma^f_{r\phi} ).
\end{align} 
The linearization has two types of terms, first using \eqref{A.2.c} 
\begin{align} 
II_1 ={}& h_{ef}  (\Gamma^e_{t\alpha} \Gamma^f_{r\beta} - \Gamma^e_{t\beta} \Gamma^f_{r\alpha} ) \\ 
={}& O(r^{-1}) O(r^{-2}) O(r^{-1}) = O(r^{-4})
\end{align} 
and 
\begin{align*} 
II_{2A}={}& g_{ef} \dot \Gamma^e_{t\alpha} \Gamma^f_{r\beta}, \\
II_{2B}={}& g_{ef}  \Gamma^e_{t\alpha} \dot \Gamma^f_{r\beta}. 
\end{align*} 
We have 
\begin{align*} 
g_{ef} \dot \Gamma^f_{ab} = \half (h_{ae;b} + h_{be;a} - h_{ab;e} ).
\end{align*}
A calculation shows
\begin{align*} 
2 g_{ef} \dot \Gamma^f_{t\alpha} = h_{te,\alpha} - h_{t\alpha,e}=\cO(r^{-1})
\end{align*} 
and hence 
\begin{align*}
II_{2A}=\cO(r^{-2}).
\end{align*}
Similarly, 
\begin{align*} 
2 g_{ef} \dot \Gamma^f_{r\alpha} ={}& h_{re,\alpha} +h_{\alpha e,r}- h_{r\alpha,e}=\cO(r^{-1}).
\end{align*} 
and hence 
\begin{align*} 
II_{2B} = \cO(r^{-2}).
\end{align*} 
This shows that \eqref{eq:dotR-2} holds. 

Now,
\begin{align*} 
D^k h={}& O(r^{-1-k}), \quad \text{where $D = \tho, \tho', \edt, \edt'$} \\ 
\vartheta\Psi_i ={}& O(r^{-3}), \quad i=-2,-1,0,1,2.
\end{align*} 
Further,  by the above 
\begin{align} 
\vartheta \Psi_0 ={}& O(r^{-3}), \nonumber\\
\Im \vartheta \Psi_0 ={}& O(r^{-4}). \label{eq:ImP24}
\end{align} 
In addition to \eqref{eq:backgr}, in Kerr we have $\nfrak = \cfrak = 0$ and in this case 
\begin{align*} 
\Im \Psi_0 ={}& O(r^{-4}).
\end{align*} 
We now consider the expression of $\InvSymb_\zeta$ in \eqref{Invzeta}. The only term which may have \`a priori a too strong growth is 
\begin{align*} 
\bigl(
 \Psi_{0} (\bhp^2 + \overline{\bhp}^2) - 2 \bar\Psi_{0} \overline{\bhp}^2
 - 4 \bhp (\bhp_- \varrho \varrho' 
 -\bhp_+ \tau \tau')\bigr) ={}& 4r^2i\Im\Psi_0 + 4i \afrak \cos\theta r^{-1} + O(r^{-2}) \\ 
={}& 4i\afrak \cos\theta r^{-1} + O(r^{-2}).
\end{align*} 
Using \eqref{eq:ImP24} we find that the second term in  $\InvSymb_\zeta$ is $O(1)$. The first term is $O(1)$, while the third term is imaginary, and the fourth term is $O(r^{-1})$. In particular, 
\begin{align*} 
\InvSymb_\zeta = \cO(r),  
\Re \InvSymb_\zeta = \cO(1).
\end{align*}
This finishes the proof of (1). Let us now show (2). The difference is now that 
\begin{subequations}\label{eq:lins}
\begin{align*} 
D^k h =O(r^{-1-k-\eps}), \quad \text{where $D = \tho, \tho', \edt, \edt'$}, \\ 
\vartheta\Psi_i = O(r^{-3-\eps}), \quad i=-2,-1,0,1,2. 
\end{align*} 
\end{subequations}
With this information, a straightforward computation using \eqref{Invzeta} shows that for $h_{ab} = O(r^{-1-\eps})$, we have 
\begin{align*} 
\II_\zeta = O(r^{1-\eps}).
\end{align*}

\end{document}